\newtheorem{theorem}{\sc Theorem}[section]
\newtheorem{proposition}[theorem]{\sc Proposition}
\newtheorem{lemma}[theorem]{\sc Lemma}
\newtheorem{corollary}[theorem]{\sc Corollary}
\theoremstyle{definition}
\newtheorem{definition}[theorem]{\sc Definition}
\newtheorem{example}[theorem]{\sc Example}
\newtheorem{remark}[theorem]{\sc Remark}
\newcommand{\chara}{\textrm{char }}
\begin{document}

%%%%%%%%%%%%%%%%%%%%%%%%%%%%%%%%%%%%%%%%%%%%%%%%%%%%%%%%%%%%%%%%%%%%%%%%%%%%%%%%

\title[$E(n)$-coactions on Semisimple Clifford Algebras]{$E(n)$-coactions on Semisimple Clifford Algebras}

\author{Fabio Renda}

\address{\parbox[b]{0.97\linewidth}{University of Ferrara, Department of Mathematics and Computer Science,\\ Via Machiavelli 30, 44121 Ferrara, Italy}}
\email{fabio.renda@unife.it}

\subjclass{Primary 15A66; Secondary 16T05}

\keywords{Hopf algebras, Clifford Algebras, Coactions}

\begin{abstract} In this article we prove that $E(n)$-coactions over a finite-dimensional algebra $A$ are classified by tuples $(\varphi, d_1, \ldots, d_n)$ consisting of an involution $\varphi$ and a family $(d_i)_{i=1,\ldots,n}$ of $\varphi$-derivations satisfying appropriate conditions. Tuples of maps can be replaced by tuples of suitable elements $(c, u_1, \ldots, u_n)$, whenever $A$ is a semisimple Clifford algebra.
\end{abstract}

\maketitle

\tableofcontents

\section{Introduction}
The study of actions and coactions of Hopf algebras on rings and associative algebras has been a matter of interest for mathematicians all around the world for at least four decades \cite{beco, CY, E, mas, ms}. 
Indeed, the problem of classifying coactions on an algebra is a natural generalization of the problem of classifying gradings on vector spaces, with many people in different areas of mathematics and physics sharing interest in this.

In our particular case the study of the coactions of the family of Hopf algebras $E(n)$ was initially motivated by the will to answer to a question suggested by C. Menini and B. Torrecillas and that originated from an example contained in \cite{mt}.
In the aforementioned paper the authors were concerned with the study of (h-)separable coalgebras in monoidal categories and gave non-trivial examples of (h-)separable coalgebras in the monoidal category $\mathcal{T}_{A \otimes H^{op}}^{\#}$ for $H=H_4$ the four-dimensional Sweedler's Hopf algebra and $A=Cl(\alpha,\beta,\gamma)$ a Clifford algebra. These particular coalgebras are called \emph{cowreath} and can be constructed in a natural way when we have a Hopf algebra $H$ and an $H$-comodule algebra $A$. 

The four-dimensional Clifford algebra $A=Cl(\alpha,\beta,\gamma)$ has a canonical $H_4$-comodule algebra structure that was described in \cite{PVO2}. The cowreath induced by this coaction is (h-)separable provided $\alpha, \beta$ and $\gamma$ satisfy the conditions described in \cite[Thm. 6.1]{mt}. Nevertheless, such conditions are forced by the particular form of the Casimir element used to realize (h-)separability and that was chosen in order to simplify calculations. 

At the time we did not know whether such a cowreath could be proved (h-)separable with no restriction whatsoever on the scalars $\alpha$, $\beta$ and $\gamma$ by just changing the $H$-comodule algebra structure of $A$ or the form of the Casimir element, so we decided to explore both possibilities. 
%Now we know that even if we change the comodule algebra structure of $A=Cl(\alpha,\beta,\gamma)$, the induced cowreath is still (h-)separable via a Casimir element of the form fixed in \cite[Thm. 6.1]{mt} only if $\gamma^2-4\alpha\beta=0$, i.e. only if $A$ is non-semisimple (see \autoref{sepdisc}). 
Menini and Torrecillas were able to prove in \cite[Thms. 1,2]{mt2} that if we keep on $A$ the canonical $H_4$-comodule algebra structure, the induced cowreath is always (h-)separable, though via a Casimir element of a much more general form. On the other hand, in order to understand how a different coaction would affect (h-)separability of the cowreath, we first needed to obtain a complete classification of $H_4$-coactions on $A=Cl(\alpha,\beta,\gamma)$. This turned out to be not an easy task, mostly because $A$ is not in general a semisimple algebra. 

Inspired by \cite{CY,ms} we understood that each $H_4$-coaction on a finite-dimensional algebra is completely determined by the choice of an involution $\varphi$ and a $\varphi$-derivation $d$ satisfying appropriate conditions, and that to obtain a full classification of $H_4$-coactions on a finite-dimensional algebra is equivalent to having a full perspective of its involutions and skew-derivations. Since any Hopf algebra $E(n)$ can be obtained from $H_4=E(1)$ by ``adjoining skew-primitive elements'' (see Definition~\ref{defEn}), this ultimately led us to the more general result herein contained (\autoref{maingen}), in which we classify all $E(n)$-coactions on a finite dimensional algebra. 

A complete classification of involutions and skew-derivations seems currently out of reach for a general Clifford algebra $A=Cl(\alpha,\beta_i,\gamma_i, \lambda_{ij})$, therefore we specialized our main result for the case when $A$ is semisimple (\autoref{oddcase}, \autoref{evencase}). In this instance a full classification of $E(n)$-coactions becomes roughly equivalent to the understanding of the structure of two particular subsets of $A$:
\[\mathcal{Z}^{\frac{1}{2}}(A)=\lbrace a \in A \ | \ a^2 \in \mathcal{Z}(A) \rbrace, \qquad \mathcal{Z}^{\sigma}(A)=\lbrace a \in A \ | \ a\sigma(a) \in \mathcal{Z}(A) \rbrace. \]
When $A=Cl(\alpha,\beta,\gamma)$ is a generalized quaternion algebra over $k$, $\mathcal{Z}^{\frac{1}{2}}(A)$ is known to be the union of the ground field $k$ and the $k$-space of so-called \emph{pure quaternions}. Nevertheless, when $A$ is not semisimple, coactions on $A$ can have a much more obscure presentation, due to the fact that its involutions and derivations are not necessarily all inner. This evidence prompted us to gather in this paper all the statements where the involved algebra $A$ has no fixed dimension, together with some interesting and useful results on Clifford algebras and their presentation by generators and relations. Further results related to the four-dimensional (non-semisimple) case and the answer to the question that started our research work are gathered in \cite{FR}.

\medskip

The paper is organized as follows. In Section \ref{preliminaries} we recall some basics on (co)-module algebras, involutions, derivations and Clifford algebras. In Section~\ref{Cliffordresults} we show that a Clifford algebra $A$ can be defined by generators and relations, by fixing a matrix $Q$ of scalars, and we show some useful relations occurring between these scalars and the properties of $A$. In particular we show that the semisimplicity of $A$ is equivalent to $\det Q \neq 0$ (Corollary~\ref{detQ}).  We also prove that the only Clifford algebras admitting a bialgebra structure are the algebras $E(n)$. Section~\ref{mainsec} contains all the steps to show that $E(n)$-coactions over a finite-dimensional algebra $A$ are classified by tuples $(\varphi, d_1, \ldots, d_n)$ consisting of an involution $\varphi$ and a family $(d_i)_{i=1,\ldots,n}$ of $\varphi$-derivations satisfying appropriate conditions (see Thm.~\ref{maingen}). In Section~\ref{gensemisimplecase} we determine an equivalent characterization for the case $A=Cl(\alpha,\beta_i,\gamma_i, \lambda_{ij})$ is a semisimple Clifford algebra. In the last section we include some examples in low dimension.
\medskip

\noindent\textit{Notations and conventions}. All vector spaces are understood to be over a fixed field $k$ and by linear maps we mean $k$-linear maps. $k^{\times}$ is used to indicate the multiplicative group of the field $k$. The unadorned tensor product $\otimes$ stands for $\otimes_k$. All linear maps whose domain is a tensor product will usually be defined on generators and understood to be extended by linearity. The direct product $R \times S$ of two rings $R$ and $S$ will always be equipped with componentwise operations. Algebras over $k$ will be associative and unital and coalgebras over $k$ will be coassociative and counital.
For an algebra, the multiplication and the unit are denoted by $m$ and $u$, respectively, while for a coalgebra, the comultiplication and the counit are denoted by $\Delta$ and $\varepsilon$, respectively. $S:H \rightarrow H$ will always denote the antipode map of $H$. We use the classical Sweedler’s notation %for calculations with the comultiplication 
and we shall write $\Delta(h)= h_{1}\otimes h_{2}$ for any $h\in H$, where we omit the summation symbol. $H^{cop}$ denotes the coopposite Hopf algebra, i.e. the Hopf algebra $H$ where the comultiplication is replaced by $\Delta^{cop}(h)=h_2 \otimes h_1$ for every $h \in H$. For a (right) $H$-comodule $M$ we write the coaction as $\rho: M\to M\otimes H$, $\rho(m)=m_0\otimes m_1$ for every $m \in M$. Given an Hopf algebra $H$ we will write ${_H}\mathrm{Vec}_k$ to denote the category of left $H$-modules and $\mathrm{Vec}_k^H$ to denote the category of right $H$-comodules. $H^*$ is the classical dual space of $H$. The center of an algebra $A$ is denoted by $\mathcal{Z}(A)$ and its group of units by $\textrm{U}(A)$, we write $J=Jac(A)$ to denote the Jacobson radical of $A$.

\section{Preliminaries}\label{preliminaries}

We start by recalling a couple of useful definitions.
\begin{definition}
Let $A$ be an algebra and let $H$ be a \emph{Hopf algebra}.
Given a $k$-linear map $\mu_A: H \otimes A \rightarrow A$, the pair $(A, \mu_A)$ is called a (left) \emph{$H$-module algebra} if the following conditions are satisfied.
\begin{eqnarray}
(A, \mu_A) &&  \textrm{is a (left)} \ H\textrm{-module.}\label{modalg0}\\
\mu(h \otimes ab)&=&\mu(h_1 \otimes a)\mu(h_2 \otimes b)\label{modalg1}\\
\mu(h \otimes 1_A)&=&\varepsilon(h)1_A \label{modalg2}
\end{eqnarray}
for all $h \in H $ and $a,b \in A$.
The map $\mu_A$ will also be called a (left) $H$-action on $A$.
\end{definition}
\begin{definition}
Let $A$ be an algebra and let $H$ be a Hopf algebra. Given a $k$-linear map $\rho_A: A \rightarrow A \otimes H$, the pair $(A, \rho_A)$ is called a (right) \emph{$H$-comodule algebra} if the following conditions are satisfied.
\begin{eqnarray}
(A, \rho_A) &&  \textrm{is a (right)} \ H\textrm{-comodule.}\label{comodalg1}\\
\rho_A (ab) &=& a_0b_0 \otimes a_1b_1.\label{comodalg2}\\
\rho_A (1_A)&=&1_A \otimes 1_H. \label{comodalg3}
\end{eqnarray}
for all $a,b \in A$.
The map $\rho_A$ will also be called a (right) $H$-coaction on $A$.
\end{definition}

\begin{remark} An $H$-module algebra is an algebra in the category ${_H}\mathrm{Vec}_k$ of left $H$-modules. An $H$-comodule algebra is an algebra in the category $\mathrm{Vec}_k^H$ of left $H$-comodules.
\end{remark}

\begin{definition}
Let $A$ be an algebra and $\varphi:A \rightarrow A$ an algebra endomorphism. We will call $\varphi$ an \emph{algebra involution} (or simply an \emph{involution}) if $\varphi^2=\textrm{Id}_A$. 

\medskip

Fix an algebra map $\varphi:A \rightarrow A$. A $k$-linear map $d:A \rightarrow A$ such that
\[d(ab)=d(a)b+\varphi(a)d(b)\]
for every $a, b \in A$ is called a $\varphi$-\emph{derivation} (or a \emph{skew-derivation}).
\end{definition}

\subsection{Clifford algebras}
Some of the main results contained in this paper concern Clifford algebras. We recall their definition and some of their important properties.

Consider a quadratic form $q: V \rightarrow k$ on a vector space $V$ over a field $k$.
This means that $q$ satisfies
\begin{eqnarray*}
&q(\lambda v)=\lambda^2 q(v) \quad \textrm{for any } \lambda \in k \textrm{ and any } v \in V,\\
&\textrm{The mapping }\beta_q(u,v):=q(u+v)-q(u)-q(v) \textrm{ is bilinear}.
\end{eqnarray*}
We can build the tensor algebra $T(V)$ over $V$ and consider the ideal 
$I_q$ generated by elements
\[v \otimes v - q(v)1\]
for $v \in V$.
The Clifford algebra $Cl(V,q)$ associated to $V$ and $q$ is the quotient algebra $\frac{T(V)}{I_q}$.

Let $\iota : V \rightarrow Cl(V,q)$ be the map defined by the composition of the inclusion of $V$ into $T(V)$ with the canonical projection $\pi :T(V) \rightarrow Cl(V,q)$. Clifford algebras satisfy the following universal property.
\begin{theorem}\cite[Theorem~3.1]{C}\label{Unicliff}
Let $A$ be a $k$-algebra and $f:V \rightarrow A$ be a $k$-linear map such that $(f(v))^2=q(v)1_A$ for all $v \in V$. Then there exists an algebra morphism $\varphi:Cl(V,q) \rightarrow A$ such that
\[\varphi(\iota(v))=f(v) \qquad \textrm{for all }v \in V. \]
\end{theorem}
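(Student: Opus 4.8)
The plan is to reduce the statement to the universal property of the tensor algebra $T(V)$, which is free on $V$ in the category of $k$-algebras. Since $f: V \to A$ is merely $k$-linear, it extends uniquely to an algebra morphism $\tilde{f}: T(V) \to A$ determined on generators by $\tilde{f}(v_1 \otimes \cdots \otimes v_r) = f(v_1)\cdots f(v_r)$ and $\tilde{f}(1_{T(V)}) = 1_A$. This freeness requires no hypothesis on $q$ whatsoever and is where the whole construction begins.

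Next I would verify that $\tilde{f}$ annihilates the defining ideal $I_q$. It suffices to check that $\tilde{f}$ kills each generator $v \otimes v - q(v)1$, because $\tilde{f}$ is multiplicative and $I_q$ is the \emph{two-sided} ideal generated by these elements: any element of $I_q$ is a finite sum of products $x\,(v\otimes v - q(v)1)\,y$, which $\tilde{f}$ sends to $\tilde{f}(x)\bigl(\tilde{f}(v\otimes v) - q(v)1_A\bigr)\tilde{f}(y)$. For the generators themselves one computes
\[
\tilde{f}(v \otimes v - q(v)1) = f(v)^2 - q(v)1_A = q(v)1_A - q(v)1_A = 0,
\]
where the middle equality is precisely the hypothesis $(f(v))^2 = q(v)1_A$. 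Hence $I_q \subseteq \ker \tilde{f}$.

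By the universal property of the quotient, $\tilde{f}$ then factors through the canonical projection $\pi: T(V) \to Cl(V,q) = T(V)/I_q$, producing an algebra morphism $\varphi: Cl(V,q) \to A$ with $\varphi \circ \pi = \tilde{f}$. Composing with the inclusion $V \hookrightarrow T(V)$ gives, for every $v \in V$,
\[
\varphi(\iota(v)) = \varphi(\pi(v)) = \tilde{f}(v) = f(v),
\]
which is the claimed compatibility; uniqueness follows because $\iota(V)$ generates $Cl(V,q)$ as an algebra, so any morphism agreeing with $f$ on $\iota(V)$ is forced everywhere. I expect no serious obstacle: the argument is a two-step application of universal properties (of $T(V)$, then of a quotient by an ideal), and the quadratic-form hypothesis enters only in the one-line verification that the generators of $I_q$ lie in the kernel. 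The sole point demanding a little care is confirming that killing the generators kills the entire two-sided ideal, which is immediate from the multiplicativity of $\tilde{f}$.
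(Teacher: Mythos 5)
Your proof is correct and is the standard argument; the paper itself does not prove this statement (it cites Chevalley) but uses exactly the same two-step universal-property technique explicitly in Section~\ref{Cliffordresults} when constructing the map $\overline{\Phi}$: extend $f$ to $T(V)$, check the generators $v\otimes v-q(v)1$ are killed so the two-sided ideal $I_q$ lies in the kernel, and factor through the quotient. The only cosmetic remark is that your final uniqueness observation, while true since $\iota(V)$ generates $Cl(V,q)$, is not actually claimed in the statement.
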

If $V$ has finite dimension $n$ and $k$ has characteristic $\chara  k \neq 2$, it is known that $V$ admits an orthogonal basis with respect to $\beta_q( \cdot, \cdot)$, i.e. a $n$-uple of vectors $(e_1, e_2, \ldots, e_n)$ such that 
\[\beta_q(e_i, e_j)=0, \ j \neq i, \qquad \beta_q(e_i, e_i)=q(e_i), \qquad \textrm{ for every } i =1, \ldots, n.\]
It is also known that in this case a basis for $Cl(V,q)$ is given by the linearly independent elements $e_{i_1}\cdots e_{i_h}$ with $i_1< \ldots < i_h$ and therefore that $\dim_k Cl(V,q)=2^n$. The distinguished element $z=e_1e_2 \cdots e_n$ is called the \emph{pseudoscalar} of $A$. Clifford algebras are ($\mathbb{Z}_2$-)graded algebras, i.e. they admit a (unique) decomposition $A=A_0 \oplus A_1$ into an even part $A_0$ and an odd part $A_1$. An element $e_{i_1}\cdots e_{i_h}$ of the basis is in $A_m$ if, and only if $h \equiv m \ (\textrm{mod } 2)$.  For a further insight into Clifford algebras we refer to \cite{C,La,Lo}.

\section{First results on Clifford algebras}\label{Cliffordresults}

When specialized to the case $k=\mathbb{R}, \mathbb{C}$, some of the results contained in this section might be considered well-known facts by Clifford algebra experts. Nonetheless, we decided to include proofs for every statement, as we could not find a detailed reference for these exact phrasings (expecially for the ones listed in the first subsection). We would like to outline the fact that in all of them no assumption is made on the ground field $k$, except from $\chara k \neq 2$.

\subsection{Clifford-type algebras and (classical) Clifford algebras}\label{ortcliff}

In \cite{PVO2} a family of algebras called ``of Clifford-type'' were introduced, in order to describe $H_4$-cleft extensions of the ground field $k$, where $H_4$ denotes Sweedler's Hopf algebra. 
\begin{definition}\cite[Def.~$1$]{PVO2}\label{clt}
Let $\alpha, \beta_i,\gamma_i \in k$ for $i=1, \ldots, n$ and $\lambda_{ij} \in k$ for $i, j \in \lbrace 1, \ldots, n \rbrace$, $i<j$.
The \emph{Clifford-type algebra} $Cl(\alpha,\beta_i,\gamma_i, \lambda_{ij})$ is the unital associative algebra generated by elements $G, X_1, \ldots, X_n$ such that $G^2=\alpha$, $X_i^2=\beta_i$, $GX_i+X_iG=\gamma_i$ for all $i=1, \ldots, n$ and $X_iX_j+X_jX_i=\lambda_{ij}$ for all $i,j \in \lbrace 1, \ldots, n \rbrace$ with $i<j$. A $k$-basis for this algebra is given by $\lbrace G^j X_{P} \rbrace$, where $j=0,1$ and $X_P=X_{i_1} \cdots X_{i_s}$ with $P=\lbrace i_1 < i_2 < \ldots < i_s \rbrace \subseteq \lbrace 1, \ldots, n \rbrace$. 
\end{definition}
These Clifford-type algebras are (isomorphic to) classical Clifford algebras whose presentation is given by generators and relations.
Let us consider the Clifford-type algebra $Cl(\alpha, \beta_i, \gamma_i, \lambda_{ij})$ and the free vector space $V$ with basis $(e_0, e_1, \dots, e_n)$. We define
\begin{equation}\label{Qmatrix}
Q:=\begin{pmatrix}
\alpha & \frac{\gamma_1}{2} & \cdots & & \cdots & \frac{\gamma_n}{2}\\
\frac{\gamma_1}{2} & \beta_1 & \cdots & & \cdots & \frac{\lambda_{2n}}{2}\\
\vdots  & \vdots & \ddots & & & \\
& & & & & \vdots\\
\vdots & \vdots & & & \beta_{n-1} & \frac{\lambda_{{n-1}n}}{2}\\
\frac{\gamma_n}{2} & \frac{\lambda_{2n}}{2} & &  \cdots & \frac{\lambda_{{n-1}n}}{2} & \beta_n
\end{pmatrix}
\end{equation}
the symmetric matrix associated to the quadratic form $q: V \rightarrow k$ such that $q(v)=v^tQv$ for every $v \in V$.
We have $q(e_0)=\alpha$ and $q(e_i)=\beta_i$ for all $i=1 \ldots n$, while $\beta_q(e_0, e_i)=\gamma_i$ for all $i=1, \ldots n$ and $ \beta_{q}(e_i, e_j)=\lambda_{ij}$ for all $i,j \in \lbrace 1, \ldots, n \rbrace$.
Whenever $\gamma_i=0=\lambda_{ij}$ for all $i, j \in \lbrace 1, \ldots, n \rbrace$ we find the classical presentation of a Clifford algebra with orthogonal generators:
\[Cl(\alpha, \beta_i, 0, 0)\cong Cl(V,q).\]
The explicit isomorphism can be presented in the following way. Define a $k$-linear map $\Phi: V \rightarrow Cl(\alpha, \beta_i, 0, 0)$ by setting
\[\Phi(e_0)=G, \ \Phi(e_i)=X_i \quad \textrm{for every } i=1, \ldots, n.\]
This map can be extended in a unique way to an algebra map whose domain is $T(V)$, by the fundamental property of the tensor algebra. Furthermore
\begin{eqnarray*}
\Phi(v \otimes v -q(v)1)&=&\Phi(v \otimes v)-q(v)\\
&=&(\Phi(v))^2-q(v)\\
&=&\left(\sum_{i=0}^n\lambda_i \Phi(e_i)\right)^2-\sum_{i=0}^n\lambda_i^2 q(e_i)\\
&=&\left(\lambda_0G+\sum_{i=1}^n\lambda_i X_i \right)^2-\sum_{i=0}^n\lambda_i^2 q(e_i)\\
&\overset{\gamma_i=\lambda_{ij}=0}{=}&\lambda_0^2 \alpha +\sum_{i=1}^n\lambda_i^2 \beta_i-\sum_{i=0}^n\lambda_i^2 q(e_i)\\
&=&0,
\end{eqnarray*}
for every $v=\sum_{i=0}^n\lambda_i e_i$. This means $\Phi(I_q)=0$, i.e. $\Phi$ induces an algebra map $\overline{\Phi}$ between $Cl(V,q)$ and $Cl(\alpha, \beta_i, 0, 0)$. Since $\overline{\Phi}(e_0^{i_0}e_1^{i_1} \cdots e_n^{i_n})=G^{i_0}X_1^{i_1} \cdots X_n^{i_n}$ for $i_j \in \lbrace 0,1 \rbrace$, we see that $\overline{\Phi}$ sends a basis into a basis and thus that it is invertible. 

On the other hand, if at least one of the $\gamma_i$'s or $\lambda_{ij}$'s is not zero, then we need the matrix $Q$ to be orthogonally diagonalizable. It is actually a known fact that a symmeytric matrix with entries \emph{in a field with characteristic different from $2$} can be diagonalized or equivalently, that we can always find a basis of $V$ which is orthogonal with respect to the bilinear form $\beta_q$. Proofs of this result are usually carried out by induction and are not constructive (see e.g. \cite[pp. $114$-$115$]{St}). Once we have obtained an orthogonal basis $(v_0, v_1, \ldots, v_n)$, we have determined a new presentation of our Clifford type-algebra
\[Cl(\alpha', \beta'_i, 0, 0)\ \cong Cl(\alpha, \beta_i, \gamma_i, \lambda_{ij}).\]
Then we can proceed as before and determine the explicit isomorphism $\overline{\Phi}: Cl(\alpha', \beta'_i, 0, 0) \rightarrow  Cl(V,q)$.
We can conclude that the following statement holds.

\begin{theorem}\label{ortbas} Let $A=Cl(\alpha, \beta_i, \gamma_i, \lambda_{ij})$ be a Clifford-type algebra on a field $k$ with $\chara k \neq 2$. Then there exists a (classical) Clifford algebra $Cl(V,q)$ which is isomorphic to $A$. To find a presentation that makes this clear, one must find an orthogonal basis for the quadratic form $q: V \rightarrow k$ defined via the matrix $Q$ in \eqref{Qmatrix}.
\end{theorem}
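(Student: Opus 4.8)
The plan is to produce the isomorphism directly from the universal property of Clifford algebras (\autoref{Unicliff}) and then to promote it to a bijection by a dimension count, invoking the diagonalization of $Q$ only in order to compute $\dim_k Cl(V,q)$.

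First I would record the \emph{polarized} form of the Clifford relation. In any $Cl(V,q)$ the identity $\iota(v)^2=q(v)1$ linearizes, upon replacing $v$ by $v+w$, to $\iota(v)\iota(w)+\iota(w)\iota(v)=\beta_q(v,w)1$ for all $v,w\in V$. Evaluating on the basis $(e_0,\dots,e_n)$ and using that the entries of $Q$ in \eqref{Qmatrix} were chosen so that $q(e_0)=\alpha$, $q(e_i)=\beta_i$, $\beta_q(e_0,e_i)=\gamma_i$ and $\beta_q(e_i,e_j)=\lambda_{ij}$ for $i<j$, these relations read $\iota(e_0)^2=\alpha$, $\iota(e_i)^2=\beta_i$, $\iota(e_0)\iota(e_i)+\iota(e_i)\iota(e_0)=\gamma_i$ and $\iota(e_i)\iota(e_j)+\iota(e_j)\iota(e_i)=\lambda_{ij}$, that is, exactly the defining relations of $A=Cl(\alpha,\beta_i,\gamma_i,\lambda_{ij})$ in \autoref{clt}.

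Next I would build the comparison map. Setting $f:V\to A$ with $f(e_0)=G$, $f(e_i)=X_i$ and extending linearly, the expansion displayed just before the statement now keeps its cross terms and yields, for $v=\sum_{i=0}^n\lambda_i e_i$,
\[(f(v))^2=\sum_{i=0}^n\lambda_i^2\,q(e_i)+\sum_{i<j}\lambda_i\lambda_j\,\beta_q(e_i,e_j)=v^tQv=q(v)1_A,\]
so \autoref{Unicliff} produces an algebra map $\overline{\Phi}:Cl(V,q)\to A$ with $\overline{\Phi}(\iota(e_0))=G$ and $\overline{\Phi}(\iota(e_i))=X_i$. Since $G,X_1,\dots,X_n$ generate $A$, the map $\overline{\Phi}$ is surjective.

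It then remains to match dimensions. The algebra $A$ has the $2^{n+1}$-element basis $\lbrace G^jX_P\rbrace$ of \autoref{clt}, while $\dim_k Cl(V,q)$ is intrinsic to the pair $(V,q)$; this is where I would use that $\chara k\neq2$ forces $Q$ to be orthogonally diagonalizable, so that $(V,q)$ admits an orthogonal basis and the ordered monomials recalled in \autoref{preliminaries} form a basis, whence $\dim_k Cl(V,q)=2^{n+1}$. A surjection between $k$-spaces of equal finite dimension is bijective, so $\overline{\Phi}$ is the desired isomorphism. I expect the only genuine obstacle to be this last point: for a non-orthogonal generating set the monomial basis of $Cl(V,q)$ is not directly guaranteed, so one must pass through the (non-constructive) existence of an orthogonal basis; equivalently, one may first orthogonalize $Q$, reducing to the orthogonal presentation $Cl(\alpha',\beta_i',0,0)$ already treated by the explicit computation above, which then supplies a fully constructive isomorphism.
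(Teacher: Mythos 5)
Your proof is correct, and it shares the paper's core idea---both produce the comparison map $\overline{\Phi}\colon Cl(V,q)\to A$ from the universal property of the Clifford algebra---but you execute the general case along a genuinely different and, in one respect, tighter route. The paper verifies $(f(v))^2=q(v)1_A$ only in the orthogonal case $\gamma_i=\lambda_{ij}=0$, proves bijectivity there by checking that $\overline{\Phi}$ carries a basis to a basis, and then handles arbitrary $\gamma_i,\lambda_{ij}$ by first diagonalizing $Q$ and asserting that the orthogonalized presentation $Cl(\alpha',\beta_i',0,0)$ is a presentation of the same algebra $A$; that re-presentation step is left informal, and making it rigorous would require essentially the surjectivity-plus-dimension argument you supply. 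You instead keep the cross terms in the expansion of $(f(v))^2$, so that the identity $(f(v))^2=v^tQv\,1_A=q(v)1_A$ holds for the original, non-orthogonal generators and \autoref{Unicliff} applies at once; the diagonalizability of $Q$ (which needs $\chara k\neq 2$) is then invoked only where it is genuinely indispensable, namely to know that $\dim_k Cl(V,q)=2^{n+1}$, and bijectivity follows from surjectivity onto the generators $G,X_1,\dots,X_n$ together with equality of finite dimensions. The only hypothesis your argument leans on, as does the paper's, is that $\lbrace G^jX_P\rbrace$ is a $k$-basis of $A$, which is built into Definition~\ref{clt}, so that $\dim_k A=2^{n+1}$ is given; with that granted, your dimension count closes the proof cleanly and in fact repairs the one under-justified step of the paper's version.
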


Theorem~\ref{ortbas} guarantees that any Clifford-type algebra can be regarded as a (classical) Clifford algebra where the generators have not been (necessarily)  chosen to be orthogonal. This enable us to use both the presentation of $A$ by generators and relations (i.e. Definition~\ref{clt}) and the vast literature on Clifford algebras without making a distinction depending on the values of the $\gamma_i$'s and the $\lambda_{ij}$'s.
In the next subsection we will explain how the semisimplicity of a Clifford algebra $A=Cl(\alpha, \beta_i, \gamma_i, \lambda_{ij})$ is related to its associated quadratic form $q$, or, equivalently, to the values of its defining scalars.

\subsection{Semisimplicity}

The following theorem gives a complete classification of semisimple Clifford algebras and also presents information on the structure of the Jacobson radical in the non-semisimple case. 

\begin{theorem}\cite{Sh}\label{semisimple}
Consider a Clifford algebra $A=Cl(\alpha, \beta_i,0,0)$ on a field $k$ with $\chara k \neq 2$ and let $J=Jac(A)$ be its Jacobson radical.
\begin{itemize}
\item If $\alpha =0$, then $G \in J$ and similarly if $\beta_i=0$, then $X_i \in J$.
\item If $\alpha \neq 0$ and $\beta_i \neq 0$ for all $i=1, \ldots n$, then $J=0$.
\item $A$ is semisimple if and only if $\alpha \neq 0$ and $\beta_i \neq 0$ for all $i=1, \ldots, n$.
\end{itemize}
\end{theorem}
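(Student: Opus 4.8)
The plan is to reduce everything to the orthogonal case $A=Cl(\alpha,\beta_i,0,0)$, which \autoref{ortbas} already guarantees we may do: any Clifford-type algebra is isomorphic to one presented on an orthogonal basis, and semisimplicity together with the structure of the Jacobson radical is preserved under algebra isomorphism. So I would fix generators $G, X_1, \ldots, X_n$ with $G^2=\alpha$, $X_i^2=\beta_i$ and pairwise anticommuting (i.e. $GX_i=-X_iG$ and $X_iX_j=-X_jX_i$ for $i\neq j$), these being the relations forced by $\gamma_i=\lambda_{ij}=0$ and $\chara k \neq 2$.

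For the first bullet, suppose $\alpha=0$. Then $G^2=0$, and I claim $G$ lies in the radical. The cleanest route is to exhibit a two-sided ideal contained in $J$: because $G$ anticommutes with each $X_i$ and squares to zero, the element $G$ generates a nilpotent ideal. Concretely, I would show that $AGA$ is a nilpotent ideal by checking on the basis $\{G^jX_P\}$ that every product lands in the span of words containing a $G$, and that squaring (or taking a suitable power of) such words annihilates them using $G^2=0$ and anticommutativity; since any nilpotent two-sided ideal is contained in $J$, we get $G \in J$. The argument for $\beta_i=0$ forcing $X_i \in J$ is identical after permuting the roles of the generators.

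For the second bullet, assume all $\alpha,\beta_i \neq 0$; I must show $J=0$, i.e. $A$ is semisimple. The most efficient approach is to realise $A$ as a tensor product of the smaller Clifford algebras $Cl(\beta_i)\cong k[X_i]/(X_i^2-\beta_i)$ and $Cl(\alpha)\cong k[G]/(G^2-\alpha)$, each of which is either a quadratic field extension (when $\beta_i$ is a non-square) or isomorphic to $k\times k$ (when $\beta_i$ is a nonzero square), and in both cases is a separable $k$-algebra since $X^2-\beta_i$ has distinct roots over $\bar k$ precisely because $\beta_i \neq 0$ and $\chara k \neq 2$. A graded/twisted tensor product of separable algebras is again separable, and separable algebras are semisimple, giving $J=0$. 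Alternatively, one can compute that the trace form associated to $q$ is nondegenerate exactly when $\det Q \neq 0$, which in the orthogonal case is $\alpha\beta_1\cdots\beta_n$ up to a power of $2$; nondegeneracy of the trace form forces $J=0$. Either route works; I would favour the tensor-product argument for its transparency.

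The third bullet is then immediate from the first two: if some defining scalar vanishes, the corresponding generator lies in $J$ and is a nonzero nilpotent, so $J \neq 0$ and $A$ is not semisimple; conversely if all scalars are nonzero, the second bullet gives $J=0$, hence $A$ is semisimple. I expect the \emph{main obstacle} to be the first bullet, specifically making rigorous that $G$ (or $X_i$) genuinely lies in the radical rather than merely being a zero-divisor; the subtlety is that a single nilpotent element need not be in $J$, so the argument must produce an honest nilpotent \emph{two-sided ideal} — here the anticommutation relations are exactly what forces products to organise so that the ideal generated by $G$ is nilpotent. A secondary technical point is handling the tensor-product decomposition correctly over an arbitrary field (tracking the sign twists and ensuring separability does not secretly use $k=\mathbb{R},\mathbb{C}$), but since $\chara k \neq 2$ is our only standing hypothesis, the separability of $k[X]/(X^2-\beta)$ for $\beta\neq 0$ is precisely what keeps the argument field-agnostic.
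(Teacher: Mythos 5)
Your proposal is correct, but on the key point it takes a genuinely different route from the paper. For the first bullet the paper does not pass through a nilpotent ideal: it verifies quasi-regularity directly, checking that $(1-aG)(1+aG)=1-aGaG=1$ because $GaG=\pm\widehat{a}\,G^{2}=0$ (where $\widehat{a}$ flips signs according to the anticommutation rules); your observation that $AGA$ is a square-zero two-sided ideal is the same computation packaged differently, and both close the gap you correctly identify (a lone nilpotent need not lie in $J$). For the second bullet the paper's argument is an elementary trace computation: with $\Phi(a)=\mathrm{tr}(\rho(a))$ for $\rho$ the left regular representation, one notes $\Phi(1)=2^{n+1}$, $\Phi$ vanishes on the non-identity basis elements and on nilpotents, hence on $J$ (which consists of nilpotents since $A$ is Artinian); because $\alpha,\beta_i\neq 0$ every basis element $z_i$ is invertible, so applying $\Phi$ to $z_i^{-1}a$ kills every coefficient of $a\in J$, and $\operatorname{char}k\neq 2$ enters only to divide by $2^{n+1}$. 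Your secondary ``trace form'' route is essentially this argument in disguise. Your preferred route via the graded tensor decomposition $Cl(\alpha)\mathbin{\widehat{\otimes}}Cl(\beta_1)\mathbin{\widehat{\otimes}}\cdots$ also works, and buys more (it essentially recovers the structure theory of Theorem~\ref{class1}), but the lemma you should invoke is not ``a graded tensor product of separable algebras is separable'' --- the correct citable statement is that a graded tensor product of \emph{graded central simple} algebras is graded central simple (Brauer--Wall theory, \cite[Ch.~IV]{La}), and one then observes that each factor $k[x]/(x^{2}-\beta)$ with $x$ odd and $\beta\neq 0$ is graded central simple, and that a finite-dimensional graded central simple algebra in characteristic $\neq 2$ is semisimple as an ungraded algebra. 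With that substitution your argument is complete; the paper's version has the advantage of being entirely self-contained.
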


\begin{proof}
First of all, observe that $\alpha=0$ means that $G^2=0$ and this is equivalent to the fact that for any $a \in A$ the element $1-aG$ is invertible and its inverse is $1+aG$. This is equivalent to $G \in J$. The proof for the statement with $X_i$ is identical. 

Now let $\mathcal{B}=\lbrace G^j X_{P} \rbrace$ be the $k$-basis of $A$ fixed in Definition~\ref{clt} and suppose $\alpha \neq 0$ and $\beta_i \neq 0$ for all $i=1, \ldots n$. Note that in this way $G$, each $X_i$, and hence each element of $\mathcal{B}$, are invertible elements of $A$. Consider the $k$-algebra homomorphism $\rho : A \rightarrow \textrm{End}_k(A)$ defined by $\rho(a)(z)=az $ for all $a,z \in A$. Define the map $\Phi : A \rightarrow k$ by $\Phi(a) = \textrm{tr}(\rho(a)), \ a \in A$, where $\textrm{tr}(\rho(a))$ is the trace of the matrix corresponding to the linear map $\rho(a)$ with respect to the ordered basis $\mathcal{B}$. We now make three simple observations.

\begin{enumerate}
\item $\Phi(1)=|\mathcal{B}|=2^{n+1}$ because $\rho(1)$ is the identity map of $A$.
\item If $1 \neq a \in \mathcal{B}$, then $\Phi(a)=0$. That’s because $\rho(a)(z)=az \neq z$ for all $z \in \mathcal{B} \setminus \lbrace a \rbrace$ and so the diagonal entries of the matrix of $\rho(a)$ are all zero hence $\Phi(a)=\textrm{tr}(\rho(a))=0$.
\item If $a \in A$ is nilpotent, then $\Phi(a)=0$. That’s because $a^m=0$ for some $m$ and so $(\rho(a))^m = \rho(a^m)=0$. Thus $\rho(a)$ is nilpotent and we know that the trace of a nilpotent matrix is zero. 
\end{enumerate}

Let $a \in J$. Then $a \neq 1$ and, since $A$ is Artinian, $a$ is nilpotent, hence $\Phi(a)=0$, by $3)$. Let $a = \sum_{i=1}^{2^{n+1}} c_i z_i$, where $c_i \in k$, $z_i \in \mathcal{B}$, $z_1=1$. So, by $1)$, $ 2)$,
\[0=\Phi(a)=\sum_{i=1}^{2^{n+1}} c_i \Phi(z_i)=c_1\Phi(z_1)=c_1\Phi(1)=2^{n+1}c_1\]
and hence $c_1=0$ because $\chara k \neq 2$. So the coefficient of $z_1$ of every element in $J$ is zero. But for every $i$, the coefficient of $z_1=1$ of the element $z_i^{-1}a \in J$ is $c_i$ and so $c_i=0$ for all $i$ (recall that the $z_i$'s are invertibles). Hence $a = 0$ and so $J=0$. 

Finally, recall that a ring is semisimple if and only if it is Artinian and its Jacobson radical is zero.
\end{proof}

\begin{corollary}\label{detQ}
Consider a Clifford algebra $A=Cl(\alpha, \beta_i,\gamma_i,\lambda_{ij})$ on a field $k$ with $\chara k \neq 2$ and let $Q$ be the matrix defined in \eqref{Qmatrix}.
Then $A$ is semisimple if, and only if, $\det Q \neq 0$.
\end{corollary}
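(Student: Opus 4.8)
The plan is to reduce the statement to the orthogonal case treated in Theorem~\ref{semisimple}, and then to track how $\det Q$ behaves when the quadratic form is diagonalized. By Theorem~\ref{ortbas} there exists a basis $(v_0, v_1, \ldots, v_n)$ of $V$ that is orthogonal for the bilinear form $\beta_q$, together with an induced presentation $Cl(\alpha', \beta'_i, 0, 0) \cong A$ having orthogonal generators. Since semisimplicity is invariant under algebra isomorphism, $A$ is semisimple if and only if $Cl(\alpha', \beta'_i, 0, 0)$ is, and the latter is governed by Theorem~\ref{semisimple}: it is semisimple precisely when $\alpha' \neq 0$ and $\beta'_i \neq 0$ for all $i$, that is, precisely when the product $\alpha'\beta'_1\cdots\beta'_n$ is nonzero.

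First I would identify this product with a determinant. With respect to the orthogonal basis, the matrix of $q$ is the diagonal matrix $Q' = \operatorname{diag}(\alpha', \beta'_1, \ldots, \beta'_n)$: indeed $\alpha' = q(v_0)$ and $\beta'_i = q(v_i)$, while the off-diagonal $(i,j)$ entries are proportional to $\beta_q(v_i, v_j)$ with $i \neq j$ and hence vanish by orthogonality. Therefore $\det Q' = \alpha'\beta'_1\cdots\beta'_n$, so the semisimplicity criterion above reads simply $\det Q' \neq 0$.

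It remains to relate $\det Q'$ to $\det Q$. Writing $P$ for the invertible change-of-basis matrix expressing $(v_0, \ldots, v_n)$ in terms of $(e_0, \ldots, e_n)$, the quadratic form transforms by congruence, $Q' = P^t Q P$, since $q(Pw) = (Pw)^t Q (Pw) = w^t(P^t Q P)w$ for every $w \in V$. Taking determinants yields $\det Q' = (\det P)^2 \det Q$. As $P$ is invertible, $(\det P)^2 \neq 0$, whence $\det Q' \neq 0$ if and only if $\det Q \neq 0$. Chaining the three equivalences gives that $A$ is semisimple $\iff \det Q' \neq 0 \iff \det Q \neq 0$, as claimed.

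I do not expect a serious obstacle: the corollary is a clean synthesis of Theorems~\ref{ortbas} and~\ref{semisimple} with elementary linear algebra. The one point deserving care is the role of the square factor $(\det P)^2$ — the actual value of the determinant is not a congruence invariant, but its vanishing is, and it is exactly this that permits replacing the condition on the diagonalized scalars $\alpha', \beta'_i$ by the basis-free condition $\det Q \neq 0$. One should also record explicitly that semisimplicity passes through the isomorphism of Theorem~\ref{ortbas}, so that verifying it on the orthogonal presentation is legitimate.
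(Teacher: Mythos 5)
Your proposal is correct and follows essentially the same route as the paper: diagonalize $q$ via Theorem~\ref{ortbas}, apply the semisimplicity criterion of Theorem~\ref{semisimple} to the orthogonal presentation, and use the congruence $Q' = P^tQP$ so that $\det Q$ and $\det Q'$ differ by the nonzero square $(\det P)^2$. Your write-up merely makes explicit the steps the paper leaves implicit (notably that $\det Q' = \alpha'\beta'_1\cdots\beta'_n$ and that semisimplicity transfers along the isomorphism).
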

\begin{proof}
Thanks to Theorem~\ref{ortbas} we know that, given a Clifford algebra $A=Cl(\alpha, \beta_i, \gamma_i, \lambda_{ij})$ whose associated quadratic form $q$ has matrix $Q$ with respect to the standard basis, we can always find a basis $\mathcal{B}=(v_0,v_1, \ldots, v_n)$ so that $q$ can be represented by a diagonal matrix 
\begin{equation*}
Q'=\begin{pmatrix}
\alpha' & 0 & \cdots & & \cdots &0\\
0 & \beta'_1 & \cdots & & \cdots & 0\\
\vdots  & \vdots & \ddots & & & \\
& & & & & \vdots\\
\vdots & \vdots & & & \beta'_{n-1} & 0\\
0 & 0 & &  \cdots & 0 & \beta'_n
\end{pmatrix}
\end{equation*}
with respect to $\mathcal{B}$.
It is also well-known that $\det Q$ and $\det Q'$ differ by a non-zero square scalar ($Q$ and $Q'$ are congruent) and therefore that $\det Q \neq 0$ if, and only if, $\det Q' \neq 0$.
\end{proof}
\begin{example}\label{4dim}
If we take a four-dimensional Clifford algebra $A=Cl(\alpha, \beta, \gamma)$, then $A$ is semisimple if, and only if, $\det Q =\alpha \beta - \frac{\gamma^2}{4} \neq 0$.
\end{example}
Corollary~\ref{detQ} agrees with the detailed classification of semisimple Clifford algebras reported in \cite{La}.
\begin{theorem}\cite[Thms. V.2.4 - V.2.5]{La}\label{class1}
Let $A$ be a $2^{n+1}$-dimensional Clifford algebra on a field $k$ with $\chara k \neq 2$ and associated quadratic form $Q$. Assume $\delta:=(-1)^{\frac{n(n+1)}{2}}\det Q \neq 0$.
\begin{itemize}
\item If $n$ is odd, then $\mathcal{Z}(A)=k$ and $A$ is a central simple algebra over $k$.
\item If $n$ is even and $\delta \notin k^2$, then $\mathcal{Z}(A)=k(\sqrt{\delta} )$ and $A$ is a central simple algebra over $k(\sqrt{\delta})$.
\item If $n$ is even and $\delta \in k^2$, then $\mathcal{Z}(A)=k \times k$ and $A \cong A_0 \times A_0$, where $A_0$ is the even part of $A$. Since $A_0$ is a Clifford algebra and $\dim_k A_0=2^n$, $A_0$ is central simple over $k$ and thus $A$ is semisimple over $k$.
\end{itemize}
\end{theorem}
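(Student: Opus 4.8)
The plan is to route everything through the \emph{pseudoscalar} (volume element) of an orthogonal basis. By Theorem~\ref{ortbas} I may pick a basis $(v_0, v_1, \ldots, v_n)$ of $V$ orthogonal for $q$, so that $A$ is generated by elements $v_i$ with $v_i^2 = a_i \in k^\times$ and $v_iv_j = -v_jv_i$ for $i \neq j$; invertibility of the $v_i$ follows from $\det Q \neq 0$, i.e. semisimplicity, via Corollary~\ref{detQ} and Theorem~\ref{semisimple}. Set $\omega := v_0 v_1 \cdots v_n$. A direct sign count yields the two facts on which everything rests: first, $\omega v_i = (-1)^{n} v_i \omega$ for every $i$, so $\omega$ is central exactly when $n$ is even and anticommutes with the generators when $n$ is odd; second, $\omega^2 = (-1)^{n(n+1)/2} a_0 a_1 \cdots a_n = (-1)^{n(n+1)/2}\det Q'$, where $Q'$ is the diagonal matrix of $q$ in the new basis. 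Since $Q$ and $Q'$ are congruent, $\det Q' = c^2 \det Q$ for some $c \in k^\times$, so $\omega^2 = c^2 \delta$ lies in the same square class as $\delta$.

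Next I would compute the center. Writing the standard basis of $A$ as $e_P := v_{i_1}\cdots v_{i_s}$ for $P = \{i_1 < \cdots < i_s\}$, the same bookkeeping gives $v_j e_P v_j^{-1} = \varepsilon_{j,P}\, e_P$ with $\varepsilon_{j,P} = (-1)^{|P|}$ if $j \notin P$ and $(-1)^{|P|-1}$ if $j \in P$. An elementary case check (choose $j \in P$ when $|P|$ is even, and $j \notin P$ when $|P|$ is odd) shows that for every proper nonempty $P$ there is a generator $v_j$ with $\varepsilon_{j,P} = -1$; the only basis elements commuting with all generators are therefore $e_\emptyset = 1$ (always) and, when $n$ is even, $\omega = e_{\{0,\ldots,n\}}$. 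Hence a general $a = \sum_P c_P e_P$ is central iff $c_P = 0$ for every non-exceptional $P$, giving $\mathcal{Z}(A) = k$ when $n$ is odd and $\mathcal{Z}(A) = k[\omega] = k \oplus k\omega$ when $n$ is even. In the latter case $k[\omega] \cong k[t]/(t^2 - \omega^2)$; since $\omega^2$ and $\delta$ share a square class, this is the field $k(\sqrt{\delta})$ when $\delta \notin k^2$, and when $\delta \in k^2$ it splits as $k \times k$ via the orthogonal idempotents $\tfrac12(1 \pm \omega/\mu)$ with $\mu^2 = \omega^2$.

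It remains to settle simplicity over $Z := \mathcal{Z}(A)$. The key reduction is that every nonzero two-sided ideal $I$ meets $Z$ nontrivially: given $0 \neq a = \sum_P c_P e_P \in I$ with $c_{P_0} \neq 0$, the element $e_{P_0}^{-1} a \in I$ has nonzero scalar part (the remaining terms are scalars times $e_R$ with $R \neq \emptyset$), and applying the averaging projection $\Pi := \prod_j \tfrac12(\mathrm{id} + \mathrm{Ad}(v_j))$—a product of commuting idempotents that fixes $1$, fixes $\omega$ when $n$ is even (and kills it when $n$ is odd), and annihilates every other $e_P$—produces a nonzero element of $Z \cap I$. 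When $Z$ is a field (namely $n$ odd, and $n$ even with $\delta \notin k^2$) this central element is a unit, forcing $I = A$; thus $A$ is simple with center $Z$, i.e. central simple over $k$, respectively over $k(\sqrt\delta)$. When $n$ is even and $\delta \in k^2$, centrality of $\omega$ gives $A = A_0 \oplus A_0\omega \cong A_0 \otimes_k k[\omega] \cong A_0 \times A_0$; since $A_0$ is a Clifford algebra of dimension $2^n$ attached to a nondegenerate space of even dimension $n$, the field-center case applied to $A_0$ (whose center is $k$) shows $A_0$ is central simple over $k$, completing the classification.

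I expect the simplicity statement, rather than the center computation, to be the main obstacle. The averaging argument has to be arranged so that $\Pi(e_{P_0}^{-1}a)$ is provably nonzero, which is exactly why one first clears a nonzero scalar term before projecting; and the split even case relies on the structural isomorphism $A_0 \cong Cl(V',q')$ together with an inductive appeal to the field-center case. Checking that $A_0$ inherits nondegeneracy and has the correct dimension $2^n$—so that the induction is legitimate and not circular—is the delicate bookkeeping, alongside the careful tracking of the anticommutation signs throughout.
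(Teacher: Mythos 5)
This theorem is stated in the paper purely as a cited result from \cite{La} (Thms.~V.2.4--V.2.5) and is not proved there, so there is no in-paper argument to compare against; your proposal supplies a self-contained proof, and it is correct. It is essentially the standard argument from the quadratic-forms literature: diagonalize $q$ via Theorem~\ref{ortbas}, introduce the pseudoscalar $\omega=v_0v_1\cdots v_n$ with $\omega v_i=(-1)^n v_i\omega$ and $\omega^2$ lying in the square class of $\delta$, compute $\mathcal{Z}(A)$ from the signs $\varepsilon_{j,P}$ of conjugation by the invertible generators acting diagonally on the monomial basis, and obtain simplicity by pushing a nonzero ideal element into the center with the idempotent $\Pi=\prod_j\tfrac12(\mathrm{id}+\mathrm{Ad}(v_j))$ after first multiplying by $e_{P_0}^{-1}$ to create a nonzero scalar term that $\Pi$ preserves. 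Two small points to tighten in the write-up. First, the phrase ``for every \emph{proper} nonempty $P$'' should also cover the full index set when $n$ is odd: there $|P|=n+1$ is even and nonempty, so choosing $j\in P$ still gives $\varepsilon_{j,P}=(-1)^{|P|-1}=-1$, which is precisely why $\omega$ fails to be central in that case; your stated recipe does handle this, but the quantifier as written excludes it. Second, the appeal to the ``field-centre case'' for $A_0$ in the split even case is legitimate and not circular, because $A_0$ is generated by the pairwise anticommuting elements $w_i=v_0v_i$ with $w_i^2=-a_0a_i\neq 0$, hence $A_0\cong Cl(V',q')$ with $\dim_k V'=n$ and $q'$ nondegenerate; since $n$ is even this instance of the theorem has odd index $m=n-1$ and falls under the first bullet, which your argument establishes independently of the even cases.
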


Theorem~\ref{semisimple} can be generalized to the case of a Clifford algebra $A=Cl(\alpha, \beta_i, \gamma_i, \lambda_{ij})$, where the generators of $A$ are not necessarily pairwise orthogonal.

\begin{theorem}\label{jacobson}
Let $A=Cl(\alpha, \beta_i, \gamma_i, \lambda_{ij})$ be a Clifford algebra on a field $k$ with $\chara k \neq 2$. Let $Q$ be the matrix defined in \eqref{Qmatrix} and $q:V \rightarrow k$ the associated quadratic form on the generating space $V$, so that $A\cong Cl(V,q)$.
Then there is an algebra isomorphism
\[\frac{Cl(V,q)}{(\ker Q)} \cong Cl(\overline{V}, \overline{q}),\]
where $\overline{V}=\frac{V}{\ker Q}$ and $\overline{q}$ is the (non-degenerate) quadratic form induced by $q$ on $\overline{V}$.
Moreover
\[J(A)=(\ker Q)\]
and the Clifford algebra $Cl(\overline{V},\overline{q})$ is semisimple.
\end{theorem}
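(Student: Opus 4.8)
The plan is to establish the two isomorphism claims first and then read off the identification of the radical, treating the inclusion $J(A)\subseteq(\ker Q)$ and its reverse separately.

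First I would build the isomorphism from the universal property of Clifford algebras (Theorem~\ref{Unicliff}). Since every $v\in\ker Q$ is $\beta_q$-orthogonal to all of $V$, the form $q$ descends to a quadratic form $\overline q$ on $\overline V=V/\ker Q$, and $\overline q$ is non-degenerate: if $\pi(v)$ lies in its radical then $\beta_q(v,w)=0$ for all $w$, i.e. $v\in\ker Q$ and $\pi(v)=0$. Writing $p\colon Cl(V,q)\to Cl(V,q)/(\ker Q)$ and $\pi\colon V\to\overline V$ for the projections and $\iota,\overline\iota$ for the canonical maps into the two Clifford algebras, the composite $V\xrightarrow{\pi}\overline V\xrightarrow{\overline\iota}Cl(\overline V,\overline q)$ sends $v$ to an element squaring to $\overline q(\pi(v))\,1=q(v)\,1$, so Theorem~\ref{Unicliff} yields an algebra map $\Psi\colon Cl(V,q)\to Cl(\overline V,\overline q)$. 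It annihilates $\iota(\ker Q)$, hence the ideal $(\ker Q)$, and so factors through a map $\overline\Psi$ out of the quotient. Conversely, $p\circ\iota$ vanishes on $\ker Q$ and therefore descends to a linear map $\overline V\to Cl(V,q)/(\ker Q)$ whose values again square to $q(v)\,1$; Theorem~\ref{Unicliff} turns this into an algebra map $\Theta\colon Cl(\overline V,\overline q)\to Cl(V,q)/(\ker Q)$. Evaluating on the generators $\overline\iota(\pi(v))$ and $p(\iota(v))$ shows $\overline\Psi$ and $\Theta$ are mutually inverse, which gives the stated isomorphism.

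Because $\overline q$ is non-degenerate, after choosing an orthogonal basis of $\overline V$ (available since $\chara k\neq2$) its matrix is diagonal with all diagonal entries non-zero; equivalently $\det\overline Q\neq0$. Thus $Cl(\overline V,\overline q)$ is semisimple by Corollary~\ref{detQ} (this is also immediate from Theorem~\ref{semisimple}, whose semisimplicity hypothesis is exactly that no diagonal entry vanishes). Transported through the isomorphism of the previous paragraph, this says $A/(\ker Q)$ is semisimple, whence $J(A)\subseteq(\ker Q)$: for a finite-dimensional, hence Artinian, algebra $A$, the radical $J(A)$ is the smallest two-sided ideal with semisimple quotient.

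For the reverse inclusion I would show $(\ker Q)$ is nilpotent. The Clifford relation $\iota(u)\iota(w)+\iota(w)\iota(u)=\beta_q(u,w)$ forces, for $u\in\ker Q$, both $\iota(u)^2=q(u)=0$ and $\iota(u)\,a=\sigma(a)\,\iota(u)$ for all $a\in A$, where $\sigma$ is the grading automorphism of $A$ (acting as $+1$ on the even part and $-1$ on the odd part). Hence $\iota(\ker Q)\cdot A=A\cdot\iota(\ker Q)$ is already a two-sided ideal and therefore equals $(\ker Q)$; moreover, in any product of $d+1$ such generators, with $d=\dim_k\ker Q$, one may slide all the factors $\iota(u_j)$ to the left, rewriting the product, up to sign, as $\iota(u_1)\cdots\iota(u_{d+1})\cdot a$ for some $a\in A$. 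Since the elements of $\iota(\ker Q)$ pairwise anticommute and square to zero, they satisfy the relations of the exterior algebra on the $d$-dimensional space $\ker Q$, so any product of $d+1$ of them vanishes and $(\ker Q)^{d+1}=0$. A nilpotent ideal is contained in the radical, giving $(\ker Q)\subseteq J(A)$ and hence $J(A)=(\ker Q)$ together with the inclusion of the previous paragraph.

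I expect the isomorphism of the first paragraph to be essentially routine bookkeeping with the universal property, and the semisimplicity step to be immediate from the earlier results. The crux is the nilpotency argument: one must combine the fact that $\iota(\ker Q)$ (anti)commutes with all of $A$ with the vanishing of the top exterior power in order to control arbitrary products inside the ideal, rather than merely its generators.
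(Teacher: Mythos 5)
Your proof is correct, and while it follows the same overall skeleton as the paper's (isomorphism via the universal property, then $J(A)\subseteq(\ker Q)$ from semisimplicity of the quotient, then the reverse inclusion from the anticommutation and square-zero properties of $\iota(\ker Q)$), two of your sub-arguments are genuinely different and arguably cleaner. For the isomorphism, the paper constructs only the map $\overline{p}\colon Cl(V,q)\to Cl(\overline{V},\overline{q})$ and then identifies $\ker\overline{p}$ with $(\ker Q)$ by expanding an arbitrary element of the kernel over the basis $\lbrace G^jX_P\rbrace$ and examining which basis products survive; you instead apply Theorem~\ref{Unicliff} a second time to produce an explicit inverse $\Theta$ out of $Cl(\overline{V},\overline{q})$, which replaces the basis bookkeeping with a check on generators. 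For the inclusion $(\ker Q)\subseteq J(A)$, the paper verifies quasi-regularity element by element, computing $(1-ba)(1+ba)=1$ for each $a\in\ker Q$ and each $b\in A$ and invoking the characterization of $J(A)$ via invertibility of $1-ba$; you instead show the whole ideal is nilpotent of index at most $d+1$ with $d=\dim_k\ker Q$, by sliding the $\iota(u_j)$ factors to the left and using that $\iota(\ker Q)$ generates a quotient of the exterior algebra $\Lambda(\ker Q)$. Your route gives slightly more information (an explicit nilpotency bound on the radical) and avoids the direct inverse computation; the paper's route stays closer to first principles and does not need the observation that $\iota(\ker Q)\cdot A$ is already two-sided. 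Both are complete; the only points worth making explicit in your write-up are that $q$ itself (not just $\beta_q$) vanishes on $\ker Q$, so that $q$ genuinely descends to $\overline{q}$ and $\iota(u)^2=0$, and that $\Theta$ is well defined on $\overline{V}$ precisely because $p\circ\iota$ kills $\ker Q$ --- both of which you state.
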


\begin{proof}
Let us fix the vector space $V=\lbrace G, X_1, \ldots, X_n \rbrace$ and the usual matrix $Q$ associated to the quadratic form $q:V \rightarrow k$.  We write $q(v)=v^tQv$ for all $v \in V$, so that $A=Cl(\alpha, \beta_i, \gamma_i, \lambda_{ij}) \cong Cl(V,q)$.
Remember that there is a symmetric bilinear form associated to $q$ defined as $\beta_q(v,w)=q(v+w)-q(v)-q(w)$ for all $v,w \in V$. The kernel of $\beta_q$ (also called the radical of $\beta_q$) is just the kernel of the matrix $Q$.
The bilinear form $\beta_q$ induces a non-degenerate bilinear form $\overline{\beta_q}:\overline{V} \times \overline{V} \rightarrow k$ on $\overline{V}:=\frac{V}{\ker Q}$, defined by
\[\overline{\beta_q}(\overline{v}, \overline{w}):=\beta(v,w), \quad \textrm{ for every } \overline{v}=v+\ker Q, \ \overline{w}=w+\ker Q \in \overline{V}.\]
Then clearly we can consider the Clifford algebra $Cl(\overline{V},\overline{q})$, where $\overline{q}:\overline{V} \rightarrow k$ is simply the quadratic form associated to $\overline{\beta_q}$, defined by $\overline{q}(\overline{v})=\frac{1}{2}\overline{\beta_q}(\overline{v},\overline{v})$ for every $\overline{v} \in \overline{V}$.

Now consider the diagram
\begin{equation*}
 \begin{tikzcd}
    V \arrow{r}{\iota} \arrow[swap]{d}{p_{\ker Q}} & Cl(V,q) \arrow[dashed]{d}{\exists!   \ \overline{p}}\\
     \frac{V}{\ker Q} \arrow{r}{\overline{\iota}} & Cl(\overline{V},\overline{q})
  \end{tikzcd}.
\end{equation*}
where $\iota$ and $\overline{\iota}$ are the canonical inclusions of each generating vector space into the respective Clifford algebra.
Since we have
\[(\overline{\iota} \circ p_{\ker Q}(v))^2=(\overline{v})^2=\overline{q}(\overline{v})=\frac{1}{2}\overline{\beta_q}(\overline{v},\overline{v})=\frac{1}{2} \beta_q(v,v)=q(v),\]
by the universal property of Clifford algebras (Theorem~\ref{Unicliff}), there exists a unique algebra morphism $\overline{p}:Cl(V,q) \rightarrow Cl(\overline{V}, \overline{q})$, such that $\overline{\iota} \circ p_{\ker Q}=\overline{p} \circ \iota$.
From this, with abuse of notation, we deduce that $\overline{p}(v)=\overline{v}$ for every $v \in V$. 

Now we can consider an element of the basis of $Cl(\overline{V},\overline{q})$, say $\overline{z}=\overline{v_{i_1}}\overline{v_{i_2}}\cdots\overline{v_{i_r}}$, for some generators $\overline{v_{i_1}},\overline{v_{i_2}}, \ldots,\overline{v_{i_r}} \in \overline{V}$, and show that
\[z=\overline{v_{i_1}}\overline{v_{i_2}}\cdots\overline{v_{i_r}}=\overline{p}(v_{i_1})\overline{p}(v_{i_2})\cdots\overline{p}(v_{i_r})=\overline{p}(v_{i_1}v_{i_2}\cdots v_{i_r}).\]
Since this holds for any element $z$ of the basis of $Cl(\overline{V},\overline{q})$, this means that $\overline{p}$ is a surjective morphism of algebras.

Next, consider the ideal $(\ker Q)$ generated in $Cl(V,q)$. If $a \in (\ker Q)$, then $a=\sum_i s_iq_i$ with $s_i \in Cl(V,q)$ and $q_i \in \ker Q$.
Then we have
\[\overline{p}(a)=\sum_i \overline{p}(s_i)\overline{p}(q_i)=\sum_i \overline{p}(s_i)(\overline{\iota}p_{\ker Q})(q_i)=\sum_i \overline{p}(s_i) \cdot 0=0,\]
which means that $(\ker Q) \subseteq \ker \overline{p}$.
Let us indicate once again with $\mathcal{B}=\lbrace G^jX_P \rbrace$ the usual basis of $Cl(V,q)$. Then, if we pick an element $a$ in $\ker \overline{p}$, we can write it $a=\sum_{i=1}^{2^{n+1}}c_iz_i$, with $c_i \in k$, $z_i \in \mathcal{B}$, $z_1=1$. Clearly we have
\[0=\overline{p}(a)=\sum_{i=1}^{2^{n+1}}c_i\overline{p}(z_i)=\sum_{i=1}^{2^{n+1}}c_i\overline{p}(v_{i_1})\overline{p}(v_{i_2})\cdots \overline{p}(v_{i_r}),\]
where we have just split every element $z_i$ of the basis into a product of generators contained in $V$.
We can use the fact that  $\overline{\iota} \circ p_{\ker Q}=\overline{p} \circ \iota$ to write
\[0=\sum_{i=1}^{2^{n+1}}c_i\overline{p}(v_{i_1})\overline{p}(v_{i_2})\cdots \overline{p}(v_{i_r})=\sum_{i=1}^{2^{n+1}}c_i\overline{v_{i_1}}\overline{v_{i_2}}\cdots \overline{v_{i_r}}.\]
Each product $\overline{v_{i_1}}\overline{v_{i_2}}\cdots \overline{v_{i_r}}$ is either $0$ (when at least one of the $v_{i_j}$'s is in $\ker Q$) or an element of the basis of $Cl(\overline{V},\overline{q})$, therefore we find that $c_i=0$ if $z_i \notin (\ker Q)$, so that $a \in (\ker Q)$. From this we can conclude that
\[\frac{Cl(V,q)}{(\ker Q)} \cong Cl(\overline{V}, \overline{q}).\]

Moreover, if we denote $A:=Cl(V,q)$ and $\overline{A}:=Cl(\overline{V},\overline{q})$, then, thanks to the surjectivity of $\overline{p}:A \rightarrow \overline{A}$, we have
\[\overline{p}(J(A)) \subseteq J(\overline{A}).\]
But $\overline{A}$ is a semisimple Clifford algebra, because $\overline{q}$ is non-degenerate (see Corollary~\ref{detQ}), thus $J(\overline{A})=0$ and this immediately implies $J(A) \subseteq \ker \overline{p}=(\ker Q)$. Finally, consider $a \in \ker Q$. For any $v \in V$ we have
\[av+va=a^tQv+v^tQa=0 \ \textrm{and } a^2=0,\]
where we have identified $a$ and $v$ with their coordinate vectors on the usual basis in $A$, with abuse of notation. If we consider $z \in \mathcal{B}=\lbrace G^jX_P \rbrace$, such that $z=v_{i_1}v_{i_2} \cdot v_{i_r}$, then it is clear that
\[az=av_{i_1}v_{i_2} \cdot v_{i_r}=-v_{i_1}av_{i_2} \cdot v_{i_r}=v_{i_1}v_{i_2}av_{i_3}\cdots v_{i_r}=(-1)^rza,\]
i.e. $az+(-1)^{r+1}za=0$ for any $z \in \mathcal{B}$.
Then, for every element $A \ni b=\sum_{i=1}^{2^{n+1}}\alpha_{j,P}G^jX_P$, we have that 
\begin{eqnarray*}
(1-ba)(1+ba)&=&1-ba+ba+baba\\
&=&1+ba\left( \sum_{i=1}^{2^{n+1}}\alpha_{j,P}G^jX_P\right)a\\
&=&1+ba^2\left( \sum_{i=1}^{2^{n+1}}(-1)^{j+|P|}\alpha_{j,P}G^jX_P\right)\\
&\overset{a^2=0}{=}&1,
\end{eqnarray*}
i.e. $1-ba$ is invertible for any $b \in A$.
This is equivalent to $a \in J(A)$, and therefore we can conclude that
\[J(A)=(\ker Q).\]
\end{proof}
%
%Since $\dim_k Cl(V,q)=2^{\ \dim_k V}$ for any Clifford algebra (see, e.g., \cite[Thm. V.1.8]{La}), we get an immediate corollary.
%
%\begin{corollary}
%Let $n+1=\dim_k V$ and $h=\dim_k \ker Q$. Then
%\[\dim_k J(A)=2^{n+1}-2^{n+1-h}=2^{n+1-h}(2^h-1).\]
%\end{corollary}
%
%\begin{example}
%If we take $A=Cl(0,0,0,0)$, the Grassmann algebra (also called the Exterior algebra $\Lambda(V)$), then $h=n+1$ and we find
%\[\dim_k J(\Lambda(V))=2^{n+1}-1.\]
%In this case $Cl(\overline{V},\overline{q}) \cong k$.
%\end{example}

\subsection{Hopf algebra structures on Clifford algebras}

In general, Clifford algebras are not Hopf algebras. The only members of this family who admit an Hopf algebra structure are those isomorphic (as associative algebras) to $E(n):=Cl(1,0,0,0)$. This family of Hopf algebras generalizes Sweedler's Hopf algebra $H_4=E(1)$ and were introduced in \cite[p. $755$]{BDG} and studied in \cite{CC, CD, PVO, PVO2}.

\begin{definition}{\cite[p.$18$]{CD}}\label{defEn}
We denote by $E(n)$ the $2^{n+1}$-dimensional Hopf algebra over a field $k$ of characteristic $\chara k \neq 2$ generated by elements $g$ and $x_i$, for $i=1, \ldots, n$, such that $g^2=1$, $x_i^2=0$  and $gx_i=-x_ig$ for any $i=1, \ldots, n$ and $x_ix_j=-x_jx_i$ for $i,j=1, \ldots, n$, $i <j$.
\end{definition}
The canonical Hopf algebra structure of $E(n)$ is given by
\[\Delta(g)=g \otimes g, \quad \Delta(x_i)=x_i \otimes g +1 \otimes x_i, \ i=1, \ldots, n\]
\[\varepsilon(g)=1, \quad \varepsilon(x_i)=0, \ i=1, \ldots, n\] 
\[S(g)=g^{-1}=g, \quad S(x_i)=gx_i, \ i=1, \ldots, n.\]
Let us prove that these are essentially the only Clifford algebras with a Hopf structure.
\begin{theorem}\label{hopfcliff}
Let $A=Cl(\alpha, \beta_i, \gamma_i, \lambda_{ij})$ be a $2^{n+1}$-dimensional Clifford algebra on a field $k$ with $\chara k \neq 2$. Then $A$ is a bialgebra if, and only if, it is isomorphic to the algebra $E(n)=Cl(1, 0,0,0)$ (not necessarily as bialgebras).
\end{theorem}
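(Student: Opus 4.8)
The plan is to prove both implications, the substantive one being that a bialgebra structure forces $A\cong E(n)$ as an algebra. The reverse implication is immediate: the excerpt already exhibits a Hopf (hence bialgebra) structure on $E(n)$, so any algebra isomorphism $f\colon E(n)\to A$ lets us transport it by setting $\Delta_A=(f\otimes f)\circ\Delta\circ f^{-1}$ and $\varepsilon_A=\varepsilon\circ f^{-1}$.

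For the forward direction I would first exploit only the counit. By definition of a bialgebra the counit $\varepsilon\colon A\to k$ is an algebra homomorphism, so applying it to the defining relations of $A=Cl(\alpha,\beta_i,\gamma_i,\lambda_{ij})$ and writing $a:=\varepsilon(G)$, $b_i:=\varepsilon(X_i)$ yields $\alpha=a^2$, $\beta_i=b_i^2$, $\gamma_i=2ab_i$ and $\lambda_{ij}=2b_ib_j$. Substituting these into the symmetric matrix $Q$ of \eqref{Qmatrix} gives $Q=ww^t$ with $w=(a,b_1,\dots,b_n)^t$; in particular the associated quadratic form is the perfect square $q(v)=\langle w,v\rangle^2=\ell(v)^2$ of the linear functional $\ell=w^t$, and $\operatorname{rank}Q\le 1$.

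The case $\operatorname{rank}Q=1$ (that is, $w\neq 0$) is then bookkeeping. Since $q=\ell^2$ with $\ell\neq 0$, I would choose $e_0'$ with $\ell(e_0')=1$ and complete it with a basis of $\ker\ell$ to obtain a $\beta_q$-orthogonal basis in which $Q$ is represented by $\operatorname{diag}(1,0,\dots,0)$: indeed $q(e_0')=1$, $q(e_i')=0$ and $\beta_q(e_a',e_b')=2\ell(e_a')\ell(e_b')=0$ for $a\neq b$. By Theorem~\ref{ortbas} this exhibits an algebra isomorphism $A\cong Cl(1,0,0,0)=E(n)$.

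The real obstacle is excluding $w=0$, in which case all scalars vanish and $A=Cl(0,0,0,0)$ is the exterior algebra $\Lambda(k^{n+1})$, a local algebra with maximal ideal $\mathfrak m=\ker\varepsilon=A^+$. Here I would bring in the comultiplication. Fix a generator $x$ (any of $G,X_1,\dots,X_n$); it satisfies $x^2=0$, $\varepsilon(x)=0$, and is nonzero in $\mathfrak m/\mathfrak m^2$. The counit axioms force $\Delta(x)=x\otimes 1+1\otimes x+t$ with $t\in\mathfrak m\otimes\mathfrak m$. Applying $\Delta$ to $x^2=0$ and projecting $A\otimes A$ onto $(\mathfrak m/\mathfrak m^2)^{\otimes 2}$ annihilates every term containing a factor $t$ and leaves $2\,\bar x\otimes\bar x=0$; since $\operatorname{char}k\neq 2$ and $\bar x\neq 0$ this is a contradiction. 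Hence $w\neq 0$ and the previous paragraph applies. I expect this degenerate case—extracting a clean contradiction from the bialgebra axioms when $A$ is the exterior algebra—to be the crux, whereas the rest reduces to the counit computation and an appeal to Theorem~\ref{ortbas}.
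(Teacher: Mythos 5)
Your proposal is correct and follows essentially the same route as the paper: apply the counit to the defining relations to force the scalars into the form $\alpha=\varepsilon(G)^2$, $\beta_i=\varepsilon(X_i)^2$, etc., reduce to either $Cl(1,0,\dots,0)=E(n)$ or the exterior algebra, and then kill the exterior-algebra case by applying $\Delta$ to $x^2=0$ and isolating the term $2x\otimes x$. Your packaging is marginally slicker in two spots — writing $Q=ww^t$ makes the square-root issue for $\alpha$ automatic, and projecting onto $(\mathfrak m/\mathfrak m^2)^{\otimes 2}$ replaces the paper's explicit linear-independence bookkeeping — but the underlying argument is the same.
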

\begin{proof}
Consider $A=Cl(\alpha, \beta_i, \gamma_i, \lambda_{ij})$ a Clifford algebra on a field $k$ with $\chara k \neq 2$. As stated in Theorem~\ref{ortbas}, we can always assume that $\gamma_i=0$ and $\lambda_{ij}=0$ for all $i,j \in \lbrace 1, \ldots, n \rbrace$, $i<j$. Suppose we define a coalgebra structure $(A,\Delta, \varepsilon)$ on $A$, such that $A$ becomes a bialgebra. In this case $\varepsilon$ is an algebra map and we have
\[0=\gamma_i=\varepsilon(\gamma_i)=\varepsilon(GX_i+X_iG)=\varepsilon(GX_i)+\varepsilon(X_iG)=2 \varepsilon(G)\varepsilon(X_i)\]
and
\[0=\lambda_{ij}=\varepsilon(\lambda_{ij})=\varepsilon(X_iX_j+X_jX_i)=\varepsilon(X_iX_j)+\varepsilon(X_jX_i)=2 \varepsilon(X_i)\varepsilon(X_j)\]
for every $i,j \in \lbrace 1, \ldots, n \rbrace$, $i<j$.

Now suppose there exists an $r \in \lbrace 1, \ldots, n \rbrace$ such that $\varepsilon(X_r) \neq 0$. Then $2\varepsilon(G)\varepsilon(X_r)=0$ and $2 \varepsilon(X_r)\varepsilon(X_j)=0$ for every $j = 1, \ldots, n $, $j \neq r$ imply that $\varepsilon(G)=0$ and $\varepsilon(X_j)=0$ for every $j = 1, \ldots, n$, $j \neq r$. By swapping the roles of $G$ and $X_r$ we can suppose that $\varepsilon(G) \neq 0$ and $\varepsilon(X_i)=0$ for every $i=1, \ldots, n$. Hence we obtain that $\alpha=\varepsilon(\alpha)=\varepsilon(G^2)=\varepsilon(G)^2 \neq 0$ and $\beta_i=\varepsilon(\beta_i)=\varepsilon(X_i^2)=\varepsilon(X_i)^2=0$ for every $i=1, \ldots, n$. In this case we see that $A$ can be identified with $Cl(\alpha, 0,0,0)$ with $\alpha \neq 0$.

On the other hand, if $\varepsilon(G)=\varepsilon(X_i) = 0$ for all $i=1, \ldots, n$, then we can call $X_{n+1}:=G$ and we get that
\[\alpha=\varepsilon(\alpha)=\varepsilon(X_{n+1}^2)=\varepsilon(X_{n+1})^2=0\]
and
\[\beta_i=\varepsilon(\beta_i)=\varepsilon(X_i^2)=\varepsilon(X_i)^2=0 \quad  \textrm{ for every } i=1, \ldots, n,\]
i.e. $A=Cl(0,0,0,0)$ is the $2^{n+1}$-dimensional Exterior algebra. We are going to prove that the latter case does not admit a bialgebra structure.
Let us write 
\[\Delta(X_1)=\sum_{P,Q \subseteq \lbrace 1, \ldots n+1 \rbrace}c_{P,Q} X_P \otimes X_Q,\]
with $c_{P,Q} \in k$ and $X_R=X_{i_1} \cdots X_{i_{|R|}}$ with $R=\lbrace i_1 < i_2 < \ldots < i_{|R|}\rbrace \subseteq \lbrace 1, \ldots, n+1 \rbrace$. Then the counit axioms give
\[X_1=(A \otimes \varepsilon)\Delta(X_1)=\sum_{P,Q \subseteq \lbrace 1, \ldots n+1 \rbrace}c_{P,Q} X_P \varepsilon(X_Q)=\sum_{P \subseteq \lbrace 1, \ldots n+1 \rbrace}c_{P,\emptyset} X_P\]
and
\[X_1=(\varepsilon \otimes A)\Delta(X_1)=\sum_{P,Q \subseteq \lbrace 1, \ldots n+1 \rbrace}c_{P,Q}\varepsilon(X_P) X_Q=\sum_{Q \subseteq \lbrace 1, \ldots n+1 \rbrace}c_{\emptyset, Q} X_Q.\]
These force $c_{\emptyset, \emptyset}=0$, $c_{\lbrace 1 \rbrace, \emptyset}=1$, $c_{\emptyset, \lbrace 1 \rbrace}=1$ and $c_{P,\emptyset}=0$, $c_{\emptyset, Q}=0$ for every $\emptyset \neq P,Q \neq \lbrace 1 \rbrace$, i.e.
\[\Delta(X_1)= 1 \otimes X_1+X_1 \otimes 1+\sum_{\emptyset \neq P,Q \subseteq \lbrace 1, \ldots n+1 \rbrace}c_{P,Q} X_P \otimes X_Q.\]
If we square this equality we find
\begin{eqnarray*}
\Delta(X_1)^2&=&2X_1 \otimes X_1+\sum_{\emptyset \neq P,P',Q,Q' \subseteq \lbrace 1, \ldots n+1 \rbrace}c_{P,Q}c_{P',Q'} X_PX_{P'} \otimes X_QX_{Q'}+\\
&+&\sum_{\emptyset \neq P,Q \subseteq \lbrace 1, \ldots, n+1 \rbrace} c_{P,Q} [X_P \otimes (X_1X_Q+X_QX_1)+(X_1X_P+X_PX_1) \otimes X_Q].
\end{eqnarray*}
Notice that $2X_1 \otimes X_1$ and any other term appearing in the RHS are linearly independent, since the former has no tensorand that is contained in $(\ker \varepsilon)^2$. On the other hand $\Delta(X_1)^2=\Delta(X_1^2)=\Delta(\beta_1)=\Delta(0)=0$, which is a contradiction. This proves that the exterior algebra $A=Cl(0,0,0,0)$ does not admit any bialgebra structure.

Therefore any Clifford algebra $A$ admitting a bialgebra structure must be isomorphic to a Clifford algebra of the form $Cl(\alpha, 0,0,0)$ with $\alpha \neq 0$. Furthermore, if $k$ does not contain a square root of $\alpha$ we cannot define a bialgebra structure on $A$, because $\varepsilon(G)$ cannot be defined (remember that $\varepsilon(G)^2=\alpha$). Conversely, if $\sqrt{\alpha} \in k$, then we can substitute the generator $G$ with $\frac{G}{\sqrt{\alpha}}$ and we get that $A$ is isomorphic to the algebra $Cl(1,0,0,0)=E(n)$.
\end{proof}

\section{\texorpdfstring{$E(n)$}{E(n)}-coactions on finite-dimensional algebras}\label{mainsec}

In this section we explain in detail the steps followed to classify all $E(n)$-coactions on a finite-dimensional algebra $A$. We will first show that any such coaction is in bijective correspondence with an $E(n)^{cop}$-action, which in turn is completely determined by the action of elements $g$ and $x_1$, \ldots, $x_n$ on the elements of $A$.
We will further observe that these correspond to the choice of a tuple $(\varphi, d_1, \ldots, d_n)$ consisting of an involution $\varphi$ and a family $(d_1, \ldots, d_n)$ of $\varphi$-derivations satisfying appropriate conditions (see Thm.~\ref{maingen}).

\bigskip

As already mentioned, the algebra $E(n)$ has a canonical Hopf structure, given by
\[\Delta(g)=g \otimes g, \quad \Delta(x_i)=x_i \otimes g +1 \otimes x_i, \ i=1, \ldots, n\]
\[\varepsilon(g)=1, \quad \varepsilon(x_i)=0, \ i=1, \ldots, n\] 
\[S(g)=g^{-1}=g, \quad S(x_i)=gx_i, \ i=1, \ldots, n.\]
For $P=\lbrace i_1,i_2,\ldots,i_s \rbrace \subseteq \lbrace 1,2, \ldots,n \rbrace$ such that $i_1 < i_2 < \cdots < i_s$, we denote $x_P = x_{i_1}x_{i_2}\cdots x_{i_s}$. If $P=\emptyset$ then $x_{\emptyset} = 1$. The set $\lbrace g^jx_P \ | \ P\subseteq \lbrace 1, \ldots ,n \rbrace, j \in \lbrace 0,1 \rbrace \rbrace$ is a basis of $E(n)$.
Let $F=\lbrace i_{j_1}, i_{j_2}, \ldots, i_{j_r} \rbrace$ be a subset of $P$ and define
\[S(F,P)=(j_1+\cdots+j_r)-\frac{r(r+1)}{2} \textrm{ and } S(\emptyset,P)=0.\]
Then computations show that
\begin{equation}\label{deltagx}
\Delta(g^jx_P)=\sum_{F \subseteq P} (-1)^{S(F,P)} g^j x_F \otimes g^{|F|+j} x_{P \setminus F},
\end{equation}
\begin{equation}\label{antipode}
S(g^jx_P) = (-1)^{j|P|}g^{j+|P|}x_P.
\end{equation}
In \cite{PVO2} it is shown that a $2^{n+1}$-dimensional Clifford algebra $A=Cl(\alpha, \beta_i, \gamma_i, \lambda_{ij})$ admits a canonical $E(n)$-comodule algebra structure $\rho: A \rightarrow A \otimes E(n)$ (that makes it a $E(n)$-cleft extension of $k$) given by
\begin{eqnarray*}
\rho(1_A)&=&1_A \otimes 1_{E(n)}\\
\rho(G) &=&G \otimes g\\
\rho(X_i) &=& X_i \otimes g + 1_A \otimes x_i, \quad i=1, \ldots, n \\
\rho(GX_i) &=& GX_i \otimes 1_{E(n)} +  G \otimes gx_i, \quad i=1, \ldots, n.
\end{eqnarray*}
Our final goal is to understand how to characterize all the possible $E(n)$-comodule algebra structures that a semisimple Clifford algebra $Cl(\alpha, \beta_i, \gamma_i, \lambda_{ij})$ admits. We start by proving that each $E(n)$-coaction on a finite-dimensional algebra $A$ corresponds to a unique $E(n)^{cop}$-action on $A$.

\subsection{From coactions to actions}

In the first place, we want to show that there is an isomorphism of categories 
\[\mathrm{Vec}_k^{E(n)} \cong {_{E(n)^{cop}}}\mathrm{Vec}_k\]
that preserves algebras, so to make clear that each right $E(n)$-coaction corresponds to a unique left $E(n)^{cop}$-action.

We recall that when $H$ is finite-dimensional there is an isomorphism of categories 
\[F':\mathrm{Vec}_k^H \rightarrow {_{H^*}}\mathrm{Vec}_k\]
given by
\[F'(M, \rho)=(M,\mu_\rho),\]
where
\[\mu_{\rho}(h^* \otimes m)=m_0h^*(m_1)\]
for every $h^*\in H^*$ and every $m \in M$ (cf. \cite{CMZ}, p. $10$). The inverse of this functor is given by $G: {_{H^*}}\mathrm{Vec}_k \rightarrow \mathrm{Vec}_k^H$, $G(M, \mu)=(M,\rho_{\mu})$, where
\[\rho_{\mu}(m)=\mu(h_i^* \otimes m) \otimes h_i\]
for every $m \in M$. Here $h_i$ is used to denote a basis of $H$. Both the functor $F'$ and its inverse $G$ send a map to itself, i.e. a map is $H$-colinear if, and only if, is $H^*$-linear. Furthermore, in our case, we also have that $H=E(n)$ is self dual, i.e. there exists an Hopf algebra map $\psi: E(n) \rightarrow E(n)^*$ that gives an isomorphism $E(n) \cong E(n)^*$. It is defined in \cite[Prop .1]{PVO} by 
\[\psi(1)=1^*+g^*=\varepsilon_H, \quad \psi(g)=1^*-g^*, \quad \psi(x_i)=x_i^*+(gx_i)^*, \quad i=1, \ldots, n.\]
We are going to modify $\psi$ in order to define a new Hopf algebra isomorphism $\varphi: E(n)^{cop} \rightarrow E(n)^*$.
\begin{lemma}
The algebra map $\varphi: E(n)^{cop} \rightarrow E(n)^*$ defined by
\[\varphi(1)=1^*+g^*=\varepsilon_{E(n)}, \quad \varphi(g)=1^*-g^*, \quad \varphi(x_i)=-x_i^*+(gx_i)^*, \quad i=1, \ldots, n\]
is an Hopf algebra isomorphism.
\end{lemma}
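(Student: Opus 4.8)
The plan is to avoid computing the convolution product on $E(n)^*$ directly, and instead realize $\varphi$ as a composite $\varphi=\psi\circ\Theta$ of the known self-duality $\psi\colon E(n)\to E(n)^*$ with an explicit Hopf algebra isomorphism $\Theta\colon E(n)^{cop}\to E(n)$. Concretely I would define $\Theta$ on generators by $\Theta(g)=g$ and $\Theta(x_i)=-gx_i$ (equivalently $\Theta(x_i)=x_ig$, since $gx_i=-x_ig$), and first show that this is a well-defined Hopf algebra isomorphism. Being well defined as an algebra map amounts to checking that the images satisfy the defining relations of $E(n)$, which are exactly the algebra relations of $E(n)^{cop}$: one has $\Theta(g)^2=g^2=1$, $\Theta(x_i)^2=(gx_i)^2=0$, and the anticommutation relations $\Theta(g)\Theta(x_i)+\Theta(x_i)\Theta(g)=0$ and $\Theta(x_i)\Theta(x_j)+\Theta(x_j)\Theta(x_i)=0$ all follow from $g^2=1$, $x_i^2=0$ and the relations among the $x_i$ after moving the extra factors of $g$ past the $x$'s. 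Bijectivity is immediate because, as a linear endomorphism of the underlying space, $\Theta$ squares to the identity: $\Theta(\Theta(x_i))=\Theta(-gx_i)=-g(-gx_i)=g^2x_i=x_i$.

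The coalgebra compatibility is where the choice of twist matters, and I expect this to be the main obstacle. I must verify that $\Theta$ intertwines $\Delta^{cop}$ on $E(n)^{cop}$ with $\Delta$ on $E(n)$. On $g$ this is trivial, as $g$ is grouplike. For $x_i$ one has $\Delta^{cop}(x_i)=g\otimes x_i+x_i\otimes 1$, so $(\Theta\otimes\Theta)\Delta^{cop}(x_i)=g\otimes(-gx_i)+(-gx_i)\otimes 1$, while on the other hand $\Delta(-gx_i)=-\Delta(g)\Delta(x_i)=(-gx_i)\otimes 1+g\otimes(-gx_i)$, so the two agree. The conceptual point is that $gx_i$ is precisely the $(1,g)$-skew-primitive produced from the $(g,1)$-skew-primitive $x_i$, which is exactly what is needed to turn $\Delta^{cop}$ into $\Delta$; the sign is then pinned down so that the composite with $\psi$ lands on the prescribed values of $\varphi$. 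Counit compatibility is immediate, and since $E(n)$ and $E(n)^{cop}$ are Hopf algebras, the resulting bialgebra isomorphism $\Theta$ automatically commutes with the antipodes, hence is a Hopf algebra isomorphism.

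It then remains to identify $\psi\circ\Theta$ with $\varphi$ on generators, which suffices since both are algebra maps. One has $\psi(\Theta(1))=\psi(1)=1^*+g^*$ and $\psi(\Theta(g))=\psi(g)=1^*-g^*$, matching $\varphi$. For the $x_i$ I would compute $\psi(gx_i)$ without unwinding the convolution product, using that the Hopf map $\psi$ commutes with antipodes together with $S_{E(n)}(x_i)=gx_i$, so that $\psi(gx_i)=\psi(S(x_i))=S_{E(n)^*}(\psi(x_i))=\psi(x_i)\circ S$. Evaluating $x_i^*\circ S$ and $(gx_i)^*\circ S$ on the basis by means of the antipode formula \eqref{antipode} yields $x_i^*\circ S=-(gx_i)^*$ and $(gx_i)^*\circ S=x_i^*$, whence $\psi(gx_i)=x_i^*-(gx_i)^*$ and therefore $\psi(\Theta(x_i))=\psi(-gx_i)=-\psi(gx_i)=-x_i^*+(gx_i)^*=\varphi(x_i)$. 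Consequently the algebra map prescribed in the statement coincides with the composite $\psi\circ\Theta$ of two Hopf algebra isomorphisms and is itself a Hopf algebra isomorphism.

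As an alternative one could verify directly that $\varphi$ respects the algebra relations of $E(n)^{cop}$ with respect to the convolution product on $E(n)^*$, that it is a coalgebra map with respect to $\Delta^{cop}$, and that it sends a basis to a basis; however, this route requires the full multiplication table of $E(n)^*$, so I would prefer the reduction to $\psi$ above, which isolates all the sign bookkeeping into the single map $\Theta$.
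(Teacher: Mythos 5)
Your proof is correct, but it takes a genuinely different route from the paper. The paper attacks the lemma head-on: it first establishes the closed formula \eqref{generalphi} for $\varphi(g^jx_P)$ by induction, reads off surjectivity from it, then writes out the comultiplication of $E(n)^*$ on the dual basis and verifies $(\varphi\otimes\varphi)\Delta^{cop}=\Delta_{E(n)^*}\varphi$ by a case-by-case bookkeeping of signs depending on the parities of $|P|$ and $|F|$. You instead factor $\varphi=\psi\circ\Theta$, where $\psi$ is the self-duality of \cite[Prop.~1]{PVO} that the paper already invokes, and $\Theta(g)=g$, $\Theta(x_i)=-gx_i$ is checked to be a Hopf algebra isomorphism $E(n)^{cop}\to E(n)$ on generators only; the identification $\psi\circ\Theta=\varphi$ then follows from $\psi(gx_i)=S_{E(n)^*}(\psi(x_i))=\psi(x_i)\circ S=x_i^*-(gx_i)^*$. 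All your individual steps check out: $\Theta$ preserves the defining relations, $\Theta^2=\mathrm{Id}$ gives bijectivity, the coalgebra compatibility on generators suffices because both $\Delta\circ\Theta$ and $(\Theta\otimes\Theta)\circ\Delta^{cop}$ are algebra maps, and a bialgebra map between Hopf algebras automatically respects antipodes. Your argument is shorter and isolates the sign bookkeeping in one place, and it also establishes well-definedness of $\varphi$ as a byproduct. The trade-off is that the paper's computation simultaneously produces the explicit formula \eqref{generalphi} and the inversion formulas for $(x_P)^*$ and $(gx_P)^*$, which are used downstream (in the explicit expression of the coaction $\rho$ in terms of $\mu$); with your approach these would still have to be derived separately, e.g.\ by a further induction through $\psi\circ\Theta$ on the basis elements $g^jx_P$.
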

\begin{proof}
One shows by induction that 
\begin{equation}\label{generalphi}
\varphi(g^jx_P)=(-1)^{\lfloor \frac{|P|+1}{2} \rfloor}(x_P)^*+(-1)^{\lfloor \frac{|P|}{2} \rfloor+j}(gx_P)^*
\end{equation}
for $j \in \lbrace 0,1 \rbrace$ and $P \subseteq \lbrace 1, \ldots n \rbrace$.
Hence we have that
\[(x_P)^*=(-1)^{\lfloor \frac{|P|+1}{2} \rfloor}\varphi \left( \frac{x_P+gx_P}{2}\right), \quad (gx_P)^*=(-1)^{\lfloor \frac{|P|}{2} \rfloor}\varphi \left( \frac{x_P-gx_P}{2}\right),\]
for every $P \subseteq \lbrace 1, \ldots n \rbrace$. This proves that $\varphi$ is a surjective and therefore bijective.
The comultiplication on $E(n)^*$ is given by
\[\Delta_{E(n)^*}((g^jx_P)^*)=\sum_{F \subseteq P} \omega \left[(x_{P \setminus F})^* \otimes(g^jx_F)^* +(-1)^{|P \setminus F|}(gx_{P \setminus F})^* \otimes (g^{j+1}x_F)^*\right],\]
where $\omega=(-1)^{S(F,P)+(|P|+1)|F|+j|P \setminus F|}$.
To show that $\varphi$ is a coalgebra map, one calculates
\[\varepsilon_{E(n)^*}(\varphi(g^jx_P))=\varepsilon_{E(n)^*} \left( (-1)^{\lfloor \frac{|P|+1}{2} \rfloor}(x_P)^*+(-1)^{\lfloor \frac{|P|}{2} \rfloor+j}(gx_P)^*\right)=\delta_{P,\emptyset}=\varepsilon_{E(n)^{cop}}(g^jx_P)\]
and
\small
\begin{eqnarray*}
(\varphi \otimes \varphi)\Delta^{cop}(g^jx_P)&=&\left(\sum_{F \subseteq P} (-1)^{S(F,P)}  \varphi(g^{|F|+j} x_{P \setminus F}) \otimes \varphi(g^j x_F) \right)\\
&=& \sum_{F \subseteq P} (-1)^{S(F,P)} \left[ (-1)^{\lfloor \frac{|P \setminus F|+1}{2} \rfloor}(x_{P \setminus F})^*+(-1)^{\lfloor \frac{|P \setminus F|}{2} \rfloor+|F|+j}(gx_{ P\setminus F})^* \right] \otimes \\
&\otimes & \left[ (-1)^{\lfloor \frac{|F|+1}{2} \rfloor}(x_F)^*+(-1)^{\lfloor \frac{|F|}{2} \rfloor+j}(gx_F)^* \right]\\
&=&\sum_{F \subseteq P} (-1)^{S(F,P)} \left[(-1)^{\lfloor \frac{|P|-|F|+1}{2} \rfloor+\lfloor \frac{|F|+1}{2} \rfloor}(x_{P \setminus F})^* \otimes (x_F)^*+ \right.\\
&+&(-1)^{\lfloor \frac{|P|-|F|}{2} \rfloor+|F|+\lfloor \frac{|F|}{2} \rfloor}(gx_{ P\setminus F})^* \otimes (gx_F)^*+\\
&+&(-1)^{\lfloor \frac{|P|-|F|+1}{2} \rfloor+\lfloor \frac{|F|}{2} \rfloor+j}(x_{P \setminus F})^* \otimes(gx_F)^*+\\
&+& \left.(-1)^{\lfloor \frac{|P|-|F|}{2} \rfloor+|F|+j+\lfloor \frac{|F|+1}{2} \rfloor}(gx_{ P\setminus F})^* \otimes (x_F)^*\right]\\
&\overset{(\star)}{=}&\sum_{F \subseteq P} (-1)^{S(F,P)}\left[ (-1)^{\lfloor \frac{|P|+1}{2} \rfloor+(1+|P|)|F|}(x_{P \setminus F})^* \otimes(x_F)^* \right.+\\
&+& (-1)^{\lfloor \frac{|P|+1}{2} \rfloor+(1+|F|)|P|}(gx_{P \setminus F})^* \otimes (gx_F)^*+ \\
&+& (-1)^{\lfloor \frac{|P|}{2} \rfloor+(1+|F|)|P|+j}(x_{P \setminus F})^* \otimes(gx_F)^* +\\
&+& \left. (-1)^{\lfloor \frac{|P|}{2} \rfloor+(1+|P|)|F|+j}(gx_{P \setminus F})^* \otimes (x_F)^*\right] \\
&=&(-1)^{\lfloor \frac{|P|+1}{2} \rfloor} \Delta((x_P)^*)+(-1)^{\lfloor \frac{|P|}{2} \rfloor+j} \Delta((gx_P)^*)\\
&=& \Delta_{E(n)^*}(\varphi(g^jx_P)).
\end{eqnarray*}
\normalsize
Equality $(\star)$ can be checked case by case, fixing the parity of $|P|$ and $|F|$. For example when both $|P|$ and $|F|$ are even, one has
\begin{eqnarray*}
&&\sum_{F \subseteq P} (-1)^{S(F,P)} \left[(-1)^{\lfloor \frac{|P|-|F|+1}{2} \rfloor+\lfloor \frac{|F|+1}{2} \rfloor}(x_{P \setminus F})^* \otimes (x_F)^*+ \right.\\
&+&(-1)^{\lfloor \frac{|P|-|F|}{2} \rfloor+|F|+\lfloor \frac{|F|}{2} \rfloor}(gx_{ P\setminus F})^* \otimes (gx_F)^*+\\
&+&(-1)^{\lfloor \frac{|P|-|F|+1}{2} \rfloor+\lfloor \frac{|F|}{2} \rfloor+j}(x_{P \setminus F})^* \otimes(gx_F)^*+\\
&+& \left.(-1)^{\lfloor \frac{|P|-|F|}{2} \rfloor+|F|+j+\lfloor \frac{|F|+1}{2} \rfloor}(gx_{ P\setminus F})^* \otimes (x_F)^*\right]\\
&=&\sum_{F \subseteq P} (-1)^{S(F,P)+\frac{|P|}{2}} \left[(x_{P \setminus F})^* \otimes (x_F)^*+ (gx_{ P\setminus F})^* \otimes (gx_F)^*+ \right.\\
&+& \left.(-1)^j(x_{P \setminus F})^* \otimes(gx_F)^*+ (-1)^j(gx_{ P\setminus F})^* \otimes (x_F)^*\right]
\end{eqnarray*}
and
\begin{eqnarray*}
&&\sum_{F \subseteq P} (-1)^{S(F,P)}\left[ (-1)^{\lfloor \frac{|P|+1}{2} \rfloor+(1+|P|)|F|}(x_{P \setminus F})^* \otimes(x_F)^* \right.+\\
&+& (-1)^{\lfloor \frac{|P|+1}{2} \rfloor+(1+|F|)|P|}(gx_{P \setminus F})^* \otimes (gx_F)^*+ \\
&+& (-1)^{\lfloor \frac{|P|}{2} \rfloor+(1+|F|)|P|+j}(x_{P \setminus F})^* \otimes(gx_F)^* +\\
&+& \left. (-1)^{\lfloor \frac{|P|}{2} \rfloor+(1+|P|)|F|+j}(gx_{P \setminus F})^* \otimes (x_F)^*\right] \\
&=&\sum_{F \subseteq P} (-1)^{S(F,P)+\frac{|P|}{2}}\left[ (x_{P \setminus F})^* \otimes(x_F)^* +(gx_{P \setminus F})^* \otimes (gx_F)^*+\right. \\
&+&\left.  (-1)^j(x_{P \setminus F})^* \otimes(gx_F)^*  +(-1)^j(gx_{P \setminus F})^* \otimes (x_F)^*\right] \\
\end{eqnarray*}
\end{proof}
Let again $h_i$ denote the $i$-th element of the basis of $E(n)$ and $h_i^*$ its dual element in $E(n)^*$.
We are going to prove the following proposition.
\begin{proposition}\label{generalduality}
Let $\varphi:E(n)^{cop} \rightarrow E(n)^*$ be the Hopf algebra isomorphism defined by \eqref{generalphi}.
The assignment
\[U:{_{E(n)^{cop}}}\mathrm{Vec}_k \rightarrow\mathrm{Vec}_k^{E(n)}, \quad U(M, \mu)=(M, \rho_{\mu}),\]
where 
\[\rho_{\mu}(m)=\mu(\varphi^{-1}(h_i^*) \otimes m) \otimes h_i\] for every $m \in M$, and $U(f)=f$ for every $E(n)^{cop}$-linear $f$ defines an invertible functor. Its inverse is given by the functor \[V: \mathrm{Vec}_k^{E(n)} \rightarrow{_{E(n)^{cop}}}\mathrm{Vec}_k, \quad V(M, \rho)=(M, \mu_{\rho}),\]
where
\[\mu_{\rho}(h \otimes m)=(\varphi(h)(m_1))m_0\]
for every $h \in E(n) $ and every $m \in M$, and $V(f)=f$ for every $E(n)$-colinear $f$. Moreover both $U$ and $V$ preserve algebras.
\end{proposition}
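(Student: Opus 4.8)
The plan is to realise both $U$ and $V$ as composites of two isomorphisms of categories that are already at our disposal, so that the statement reduces to formal bookkeeping. The first ingredient is the finite-dimensional duality recalled above, namely the mutually inverse isomorphisms $F'\colon \mathrm{Vec}_k^{E(n)} \to {_{E(n)^*}}\mathrm{Vec}_k$ and $G\colon {_{E(n)^*}}\mathrm{Vec}_k \to \mathrm{Vec}_k^{E(n)}$, both acting as the identity on morphisms. The second ingredient is restriction of scalars along the Hopf algebra isomorphism $\varphi\colon E(n)^{cop}\to E(n)^*$ of the preceding Lemma: any algebra isomorphism induces an isomorphism of left module categories $\varphi^{\ast}\colon {_{E(n)^*}}\mathrm{Vec}_k \to {_{E(n)^{cop}}}\mathrm{Vec}_k$ sending $(M,\mu)$ to $M$ with action $h\otimes m \mapsto \mu(\varphi(h)\otimes m)$, with inverse $(\varphi^{-1})^{\ast}$, again the identity on maps.

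First I would unwind the definitions to verify the identifications $U=G\circ(\varphi^{-1})^{\ast}$ and $V=\varphi^{\ast}\circ F'$. Indeed $(\varphi^{-1})^{\ast}$ turns $(M,\mu)$ into the $E(n)^*$-module with action $h^*\otimes m\mapsto \mu(\varphi^{-1}(h^*)\otimes m)$, and feeding this into $G$ produces the coaction $m\mapsto \mu(\varphi^{-1}(h_i^*)\otimes m)\otimes h_i=\rho_\mu(m)$; dually $F'$ sends $(M,\rho)$ to the action $h^*\otimes m\mapsto m_0\,h^*(m_1)$, and restricting along $\varphi$ gives $h\otimes m\mapsto m_0\,\varphi(h)(m_1)=(\varphi(h)(m_1))m_0=\mu_\rho(h\otimes m)$. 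Since $F'$, $G$ are mutually inverse isomorphisms and $\varphi^{\ast}$, $(\varphi^{-1})^{\ast}$ are mutually inverse isomorphisms, their composites $U$ and $V$ are mutually inverse isomorphisms of categories that are the identity on morphisms, which settles every claim except algebra preservation.

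For the last assertion I would adopt the monoidal viewpoint fixed in the Remark of Section~\ref{preliminaries}: a comodule algebra, respectively a module algebra, is precisely an algebra object in the relevant category, so it suffices to check that each of the two factors is strict monoidal, monoidal functors carrying algebra objects to algebra objects. For $F'$ this rests on the one-line computation $h^*\cdot(m\otimes n)=m_0\otimes n_0\,h^*(m_1n_1)=(h^*_1\cdot m)\otimes(h^*_2\cdot n)$, valid because $\Delta_{E(n)^*}$ is dual to the multiplication of $E(n)$; hence $F'$ and $G$ preserve algebras. For $\varphi^{\ast}$ monoidality is exactly the assertion that $\varphi$ is a coalgebra map, since the tensor action of a module category is built from the comultiplication; the identities $(\varphi\otimes\varphi)\circ\Delta_{E(n)^{cop}}=\Delta_{E(n)^*}\circ\varphi$ and $\varepsilon_{E(n)^*}\circ\varphi=\varepsilon_{E(n)^{cop}}$ proved in the preceding Lemma make $\varphi^{\ast}$ and $(\varphi^{-1})^{\ast}$ strict monoidal.

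The main obstacle I anticipate is conceptual rather than computational: one must be sure the \emph{coopposite} enters in the right place. The comultiplication of $E(n)^*$ transported through $\varphi$ matches that of $E(n)^{cop}$ and not that of $E(n)$, which is precisely why the source category of $V$ is taken to be ${_{E(n)^{cop}}}\mathrm{Vec}_k$; it is this matching, supplied by the preceding Lemma, that guarantees the module-algebra compatibility $h\cdot(ab)=(h_1\cdot a)(h_2\cdot b)$ is transported correctly. Granting the strict monoidality of each factor, algebra preservation for $U=G\circ(\varphi^{-1})^{\ast}$ and $V=\varphi^{\ast}\circ F'$ follows at once, since a composite of monoidal functors is monoidal.
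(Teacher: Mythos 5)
Your proof is correct, but it takes a genuinely different route from the paper. The paper verifies everything by direct computation: it checks the comodule axioms for $\rho_{\mu}$ and the module axioms for $\mu_{\rho}$ element by element (using \cite[Lemma 1]{CMZ} to test identities in $M\otimes E(n)\otimes E(n)$ against arbitrary functionals), then explicitly computes $UV=\textrm{Id}$ and $VU=\textrm{Id}$, and finally verifies the four compatibility conditions \eqref{modalg1}--\eqref{modalg2} and \eqref{comodalg2}--\eqref{comodalg3} one at a time. You instead factor $U=G\circ(\varphi^{-1})^{\ast}$ and $V=\varphi^{\ast}\circ F'$ and reduce the whole statement to three standard facts: that $F'$ and $G$ are mutually inverse and strict monoidal (your one-line check $h^*\cdot(m\otimes n)=m_0\otimes n_0\,h^*(m_1n_1)=(h^*_1\cdot m)\otimes(h^*_2\cdot n)$ is exactly right, since $\Delta_{E(n)^*}$ is dual to the multiplication of $E(n)$); that restriction of scalars along a bialgebra isomorphism is a strict monoidal isomorphism of module categories, with monoidality supplied precisely by the coalgebra-map property of $\varphi$ established in the preceding Lemma; and that monoidal functors carry algebra objects to algebra objects, which by the Remark in Section~\ref{preliminaries} is exactly the statement that (co)module algebras are preserved. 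Your diagnosis of where the coopposite enters is also the right one: the Lemma shows $\Delta_{E(n)^*}\circ\varphi=(\varphi\otimes\varphi)\circ\Delta^{cop}$, which is what makes the twisted compatibility $\mu(h\otimes ab)=\mu(h_2\otimes a)\mu(h_1\otimes b)$ come out correctly on the module side. What your approach buys is brevity and conceptual clarity, and it localizes all the real content in the preceding Lemma; what the paper's approach buys is self-containedness, since it never needs to set up the monoidal structures on the module and comodule categories explicitly, and it produces along the way the explicit formulas (e.g.\ for $\rho_{\mu_\rho}$ and $\mu_{\rho_\mu}$) that are reused in the rest of Section~\ref{mainsec}.
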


\begin{proof}
We can use the isomorphism $\varphi$ to define a mapping
\[U:{_{E(n)^{cop}}}\mathrm{Vec}_k \rightarrow\mathrm{Vec}_k^{E(n)}, \quad U(M, \mu)=(M, \rho_{\mu}),\]
where
\[\rho_{\mu}(m)=\mu(\varphi^{-1}(h_i^*) \otimes m) \otimes h_i\]
and $U(f)=f$ for every $E(n)^{cop}$-linear $f$.
To show that $U$ is a functor, we will make use of \cite[Lemma $1$]{CMZ}.
We have, for every $f, g \in E(n)^*$ and $m \in M$,
\begin{eqnarray*}
(\textrm{Id}_M \otimes f \otimes g)(\rho_{\mu} \otimes \textrm{Id}_{E(n)})\rho_{\mu}(m)&=& (\textrm{Id}_M \otimes f \otimes g)(\mu(\varphi^{-1}(h_j'^*) \otimes \mu(\varphi^{-1}(h_i^*) \otimes m)) \otimes h'_j \otimes h_i)\\
&=&\mu(\varphi^{-1}(h_j'^*)\cdot \varphi^{-1}(h_i^*) \otimes m)  f(h'_j) g(h_i)\\
&=&\mu(\varphi^{-1}(h_j'^* \star h_i^*) \otimes m)f(h'_j) g(h_i)\\
&=&\mu(\varphi^{-1}(f(h'_j)h_j'^* \star g(h_i)h_i^*) \otimes m)\\
&=&\mu(\varphi^{-1}(f \star g) \otimes m)\\
&=&\mu(\varphi^{-1}((f \star g)(h_i)h_i^*) \otimes m)\\
&=&\mu(\varphi^{-1}(h_i^*) \otimes m)(f \star g)(h_i)\\
&=&\mu(\varphi^{-1}(h_i^*) \otimes m)f(h_{i_1})g(h_{i_2})\\
&=&(\textrm{Id}_M \otimes f \otimes g)\mu(\varphi^{-1}(h_i^*) \otimes m) \otimes h_{i_1} \otimes h_{i_2}\\
&=&(\textrm{Id}_M \otimes f \otimes g)(\textrm{Id}_M \otimes \Delta)\mu(\varphi^{-1}(h_i^*) \otimes m) \otimes h_i\\
&=&(\textrm{Id}_M \otimes f \otimes g)(\textrm{Id}_M \otimes \Delta)\rho_{\mu}(m).\\
\end{eqnarray*}
By \cite[Lemma $1$]{CMZ}, it follows that $(\rho_{\mu} \otimes \textrm{Id}_{E(n)})\rho_{\mu}=(\textrm{Id}_M \otimes \Delta)\rho_{\mu}$ (cf. proof of Prop. $3$ ibid.).
Furthermore
\begin{eqnarray*}
(\textrm{Id}_M \otimes \varepsilon_{E(n)})\rho_{\mu}(m)&=& \mu(\varphi^{-1}(h_i^*) \otimes m) \otimes \varepsilon_{E(n)}(h_i)\\
&=&\mu(\varphi^{-1}(\varepsilon_{E(n)}(h_i)h_i^*) \otimes m)\\
&=&\mu(\varphi^{-1}(\varepsilon_{E(n)}) \otimes m)\\
&=&\mu(1_{E(n)} \otimes m)\\
&=&m
\end{eqnarray*}
for every $m \in M$ and thus we can conclude that $(M, \rho_{\mu})$ is an object in $\mathrm{Vec}_k^{E(n)}$.
Now let us consider two ${E(n)}^{cop}$-modules $M,N$ and an ${E(n)}^{cop}$-linear map $f:M \rightarrow N$. We prove that $f$ is automatically ${E(n)}$-colinear.
We have
\begin{eqnarray*}
(f \otimes \textrm{Id}_{E(n)})\rho_M(m)&=&(f \otimes \textrm{Id}_{E(n)})(\mu_M(\varphi^{-1}(h_i^*) \otimes m) \otimes h_i)\\
&=& (f \circ \mu_M)(\varphi^{-1}(h_i^*) \otimes m) \otimes h_i\\
&=& \mu_N(\varphi^{-1}(h_i^*) \otimes f(m)) \otimes h_i\\
&=& (\rho_N\circ f)(m).
\end{eqnarray*}
Finally, since $U$ is the identity on morphisms, we can conclude that $U$ is a functor.

$U$ has an inverse $V: \mathrm{Vec}_k^{E(n)} \rightarrow{_{E(n)^{cop}}}\mathrm{Vec}_k$ defined by
\[V: \mathrm{Vec}_k^{E(n)} \rightarrow{_{E(n)^{cop}}}\mathrm{Vec}_k, \quad V(M, \rho)=(M, \mu_{\rho}),\]
where
\[\mu_{\rho}(h \otimes m)=(\varphi(h)(m_1))m_0\]
and $V(f)=f$ for every $E(n)$-colinear $f$. In fact, we have
\begin{eqnarray*}
\mu_{\rho}(\textrm{Id}_{E(n)} \otimes \mu_{\rho})(h' \otimes h \otimes m)&=&\mu_{\rho}(h' \otimes (\varphi(h)(m_1))m_0)\\
&=&(\varphi(h)(m_1))(\varphi(h')(m_{0_1}))m_{0_0}\\
&=&(\varphi(h)(m_{1_2}))(\varphi(h')(m_{1_1}))m_0\\
&=& (\varphi(h') \star \varphi(h))(m_1)m_0\\
&=&\varphi(h'h)(m_1)m_0\\
&=&\mu_{\rho}(m \otimes \textrm{Id}_M)(h' \otimes h \otimes m)
\end{eqnarray*}
for every $h, h' \in E(n)$ and every $m \in M$. In the third equality one must pay attention to the fact that $M$ is an $E(n)^{cop}$-module.
Next
\begin{eqnarray*}
\mu_{\rho}(1_{E(n)} \otimes m)&=&(\varphi(1_{E(n)})(m_1))(m_0)\\
&=& \varepsilon(m_1)m_0\\
&=&m
\end{eqnarray*}
for every $m \in M$ and so we can conclude that $(M, \mu_{\rho})$ is an object in ${_{E(n)^{cop}}}\mathrm{Vec}_k$. Then we consider two $E(n)$-comodules $M,N$ and an $E(n)$-colinear map $f:M \rightarrow N$. We prove that $f$ is automatically $E(n)^{cop}$-linear.
We have
\begin{eqnarray*}
(f \circ \mu_M)(h \otimes m)&=&(\varphi(h)(m_1))f(m_0)\\
&=&(\varphi(h)(f(m)_1)f(m)_0\\
&=&\mu_N(h \otimes f(m))\\
&=& \mu_N(\textrm{Id}_{E(n)} \otimes f)(h \otimes m)
\end{eqnarray*}
for every $h \in E(n)$ and every $m \in M$. Since $V$ is the identity on morphisms, we conclude that $V$ is a functor.

We also want to prove that $UV=\textrm{Id}_{\mathrm{Vec}_k^{E(n)}}$ and $VU=\textrm{Id}_{{_{E(n)^{cop}}}\mathrm{Vec}_k}$.
We have $UV(M, \rho)=(M,\rho_{\mu_{\rho}})$
and
\begin{eqnarray*}
\rho_{\mu_{\rho}}(m)&=&\mu_{\rho}(\varphi^{-1}(h_i^*) \otimes m) \otimes h_i\\
&=&(\varphi\varphi^{-1}(h_i^*))(m_1)m_0 \otimes h_i\\
&=&h_i^*(m_1)m_0 \otimes h_i\\
&=&m_0 \otimes h_i^*(m_1) h_i\\
&=& m_0 \otimes m_1\\
&=&\rho(m)
\end{eqnarray*}
for every $m \in M$. Moreover $VU(M, \mu)=(M, \mu_{\rho_{\mu}})$ and
\begin{eqnarray*}
\mu_{\rho_{\mu}}(h \otimes m) &=& (\varphi(h)(h_i))\mu(\varphi^{-1}(h_i^*) \otimes m)\\
&=&\mu([\varphi(h)(h_i)]\varphi^{-1}(h_i^*) \otimes m)\\
&=&\mu(\varphi^{-1}([\varphi(h)(h_i)]h_i^*) \otimes m).\\
\end{eqnarray*}
Notice that the $\varphi(h)(h_i)$'s are the coordinates of the vector $\varphi(h) \in E(n)^*$ on the dual basis $h_i^*$, which means that $[\varphi(h)(h_i)]h_i^*=\varphi(h)$. Hence
\[\mu_{\rho_{\mu}}(h \otimes m)=\mu(\varphi^{-1}(\varphi(h)) \otimes m)=\mu(h \otimes m)\]
for any $h \in E(n)$ and any $m \in M$. Therefore we have proved that the functors $U$ and $V$ previously defined give an isomorphism of categories
\[{_{E(n)^{cop}}}\mathrm{Vec}_k\cong \mathrm{Vec}_k^{E(n)}.\]

Finally, the functors $U$ and $V$ send algebras to algebras, i.e. the image $(A, \rho_{\mu})$ of an $E(n)^{cop}$-module algebra $(A, \mu)$ is an $E(n)$-comodule algebra and, conversely, the image $(V(A), \mu_{\rho})$ of an $E(n)$-comodule algebra $(A, \rho)$ is an $E(n)^{cop}$-module algebra. Suppose $(A, \mu)$ is an $E(n)^{cop}$-module algebra. This means that
\eqref{modalg1} and \eqref{modalg2} hold. To prove that $A$ is also an $E(n)$-comodule algebra we need to show \eqref{comodalg2} and \eqref{comodalg3} are satisfied.
For any $f \in E(n)^*$ we have
\begin{eqnarray*}
(\textrm{Id}_A \otimes f)(\rho_{\mu}(a)\rho_{\mu}(b))&=&(\textrm{Id}_A \otimes f)([\mu(\varphi^{-1}(h_i^*) \otimes a) \otimes h_i][\mu(\varphi^{-1}(h_j'^*) \otimes b) \otimes h_j'])\\
&=&\mu(\varphi^{-1}(h_i^*) \otimes a)\mu(\varphi^{-1}(h_j'^*) \otimes b)f(h_ih_j')\\
&=&\mu(\varphi^{-1}(h_i^*) \otimes a)\mu(\varphi^{-1}(h_j'^*) \otimes b)f_1(h_i)f_2(h_j')\\
&=&\mu(\varphi^{-1}(f_1(h_i)h_i^*) \otimes a)\mu(\varphi^{-1}(f_2(h_j')h_j'^*) \otimes b)\\
&=&\mu(\varphi^{-1}(f_1) \otimes a)\mu(\varphi^{-1}(f_2) \otimes b)\\
&=&\mu(\varphi^{-1}(f)_2 \otimes a)\mu(\varphi^{-1}(f)_1 \otimes b)\\
&\overset{\eqref{modalg1}}{=}&\mu(\varphi^{-1}(f) \otimes ab)\\
&=&\mu(\varphi^{-1}(f(h_i)h_i^*) \otimes ab)\\
&=&\mu(\varphi^{-1}(h_i^*) \otimes ab)f(h_i)\\
&=&(\textrm{Id}_A \otimes f)(\mu(\varphi^{-1}(h_i^*) \otimes ab) \otimes h_i)\\
&=&(\textrm{Id}_A \otimes f)\rho_{\mu}(ab).
\end{eqnarray*}
By \cite[Lemma $1$]{CMZ}, it follows that $\rho_{\mu}(a)\rho_{\mu}(b)=\rho_{\mu}(ab)$ for any $a,b \in A$ and thus \eqref{comodalg2} holds true.

Furthermore
\begin{eqnarray*}
\rho_{\mu}(1_A)&=&\mu(\varphi^{-1}(h_i^*) \otimes 1_A) \otimes h_i\\
&\overset{\eqref{modalg2}}{=}& \varepsilon_{E(n)}(\varphi^{-1}(h_i^*))1_A \otimes h_i\\
&=& \varepsilon_{E(n)^*}(h_i^*)1_A \otimes h_i\\
&=& h_i^*(1_H)1_A\otimes h_i\\
&=&1_A \otimes 1_{E(n)}
\end{eqnarray*}
and thus \eqref{comodalg3} is satisfied and $(A, \rho_{\mu})$ is an $E(n)$-comodule algebra.

Now suppose $(A, \rho)$ is an $E(n)$-comodule algebra. This means that \eqref{comodalg2} and \eqref{comodalg3} hold. To prove that $(A, \mu_{\rho})$ is an $E(n)^{cop}$-module algebra we need to show that \eqref{modalg1} and \eqref{modalg2} are verified. We have
\begin{eqnarray*}
\mu_{\rho}(h \otimes ab)&=&(\varphi(h)((ab)_1))(ab)_0\\
&\overset{\eqref{comodalg2}}{=}&(\varphi(h)(a_1b_1))a_0b_0\\
&=&[(\varphi(h))_1(a_1)(\varphi(h))_2(b_1)]a_0b_0\\
&=&[\varphi(h_2)(a_1)\varphi(h_1)(b_1)]a_0b_0\\
&=&(\varphi(h_2)(a_1)a_0)(\varphi(h_1)(b_1)b_0)\\
&=&\mu_{\rho}(h_2 \otimes a)\mu_{\rho}(h_1 \otimes b)
\end{eqnarray*}
for every $h \in E(n)$ and every $a,b \in A$, thus \eqref{modalg1} is satisfied.
Then
\begin{eqnarray*}
\mu_{\rho}(h \otimes 1_A)&\overset{\eqref{comodalg3}}{=}&(\varphi(h)(1_{E(n)}))1_A\\
&=& \varepsilon_{E(n)^*}(\varphi(h))1_A\\
&=& \varepsilon_{E(n)}(h)1_A,
\end{eqnarray*}
for every $h \in E(n)$ and we can conclude.
\end{proof}

The last proposition proves that each $E(n)$-coaction $\rho$ on a finite dimensional algebra $A$ can be expressed in terms of a unique $E(n)^{cop}$-action:
\[\rho(a)=\mu(\varphi^{-1}(h_i^*) \otimes a) \otimes h_i \quad \textrm{for every } a \in A,\]
and explicitly
\begin{eqnarray*}
\rho(a)&=&\sum_{\underset{P}{j=0,1}}\mu(\varphi^{-1}((g^jx_P)^*) \otimes a) \otimes g^jx_P\\
&=&\sum_{\underset{P}{j=0,1}}\mu\left(\varphi^{-1}\left((-1)^{\lfloor \frac{|P|+1-j}{2} \rfloor}\varphi \left( \frac{x_P +(-1)^jgx_P}{2}\right)\right) \otimes a \right) \otimes g^jx_P\\
&=&\sum_{\underset{P}{j=0,1}}(-1)^{\lfloor \frac{|P|+1-j}{2} \rfloor}\mu\left( \frac{x_P +(-1)^jgx_P}{2} \otimes a \right) \otimes g^jx_P\\
&=&\sum_P(-1)^{\lfloor \frac{|P|+1}{2} \rfloor}\mu\left( \frac{x_P +gx_P}{2} \otimes a \right) \otimes x_P+(-1)^{\lfloor \frac{|P|}{2} \rfloor}\mu\left( \frac{x_P -gx_P}{2} \otimes a \right) \otimes gx_P\\\
&=&\sum_P(-1)^{\lfloor \frac{|P|+1}{2} \rfloor} \left[ \mu(x_P \otimes a) \otimes \frac{x_P +(-1)^{|P|}gx_P}{2}+\mu(gx_P \otimes a) \otimes \frac{x_P +(-1)^{|P|+1}gx_P}{2}\right],\\
\end{eqnarray*}
for every $a \in A$, where $P$ is taken to run over all ordered subsets $\lbrace i_1<i_2< \cdots <i_s \rbrace$ of $\lbrace 1, \ldots, n \rbrace$.
It is clear that, since $\mu(gx_P \otimes a)=\mu(g \otimes \mu(x_P \otimes a))$, and $\mu(x_P \otimes a)=\mu(x_{i_1} \otimes \mu(x_{i_2} \otimes \ldots \mu(x_{i_s} \otimes a)))$, then $\rho$ is completely determined once we know how $g$ and each $x_i$ act on the elements of $A$. As a matter of fact $E(n)^{cop}$-actions on a finite-dimensional algebra $A$ are in bijective correspondence with $n+1$-uples $(\varphi, d_1, \ldots, d_n)$ of suitable maps as we will prove in the sequel.

\subsection{From actions to involutions and derivations}

We have seen that each $E(n)^{cop}$-action on a finite dimensional algebra $A$ is completely determined by the choice of an action of $g, x_1, \ldots, x_n$ on the elements of $A$. This, in turn, is equivalent to the choice of an $n+1$-uple $(\varphi, d_1, \ldots, d_n)$, where $\varphi$ is an involution of $A$, the $d_i$'s are $\varphi$-derivations on $A$ and $d_i^2 \equiv 0$, $\varphi d_i=-d_i \varphi$, $d_id_j=-d_jd_i$  for every $i,j=1, \ldots, n$.  
We will prove this claim by making use of a well-established approach (see \cite{CY,ms} and Examples $4.1.6$ and $4.1.8$ in \cite{mo}).

\begin{proposition}\label{firstprop2}
Let $A$ be a finite-dimensional algebra over a field $k$ of characteristic $\chara k \neq 2$. Then an $E(n)^{cop}$-action on $A$ is completely determined by the choice of:
\begin{enumerate}[(i)]
\item an automorphism $\varphi$ of $A$ of order $o(\varphi) \leq 2$ (i.e. an involution of $A$),\label{item1}
\item a family $(d_1, \ldots, d_n)$ of $\varphi$-derivations such that $d_i^2 \equiv 0$, $\varphi d_i = -d_i \varphi$ and $d_id_j=-d_jd_i$ for $i,j=1, \ldots, n$.\label{item2}
\end{enumerate}
This correspondence is bijective.
\end{proposition}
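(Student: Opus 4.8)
The plan is to exploit the fact that, as an algebra, $E(n)^{cop}$ coincides with $E(n)=Cl(1,0,0,0)$ and is therefore generated by $g$ and $x_1,\ldots,x_n$ subject only to the Clifford-type relations of Definition~\ref{clt}. Consequently a left $E(n)^{cop}$-module structure on $A$ is the same datum as an algebra homomorphism $\Psi:E(n)^{cop}\to\mathrm{End}_k(A)$, and such a homomorphism is freely determined by the images $\varphi:=\Psi(g)=\mu(g\otimes -)$ and $d_i:=\Psi(x_i)=\mu(x_i\otimes -)$, subject precisely to the relations these images must satisfy. I would fix $\varphi$ and the $d_i$ as above and read off the correspondence in both directions from this observation.

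For the forward direction, starting from an $E(n)^{cop}$-action I would translate each defining relation of $E(n)^{cop}$ into a relation among $\varphi$ and the $d_i$, using only left-module associativity $\mu(hh'\otimes a)=\mu(h\otimes\mu(h'\otimes a))$: the relation $g^2=1$ gives $\varphi^2=\mathrm{Id}_A$, the relation $x_i^2=0$ gives $d_i^2\equiv 0$, $gx_i=-x_ig$ gives $\varphi d_i=-d_i\varphi$, and $x_ix_j=-x_jx_i$ gives $d_id_j=-d_jd_i$. The module-algebra axioms then supply the remaining structure: applying \eqref{modalg1} to $h=g$ (whose coproduct is $g\otimes g$) shows $\varphi$ is multiplicative, while \eqref{modalg2} for $h=g$ gives $\varphi(1_A)=1_A$, so $\varphi$ is an involution; applying \eqref{modalg1} to $h=x_i$, whose coproduct in $E(n)^{cop}$ is $\Delta^{cop}(x_i)=g\otimes x_i+x_i\otimes 1$, yields exactly $d_i(ab)=d_i(a)b+\varphi(a)d_i(b)$, so each $d_i$ is a $\varphi$-derivation. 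This produces a tuple of the required form.

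For the converse I would start from such a tuple and invoke the universal property of $E(n)^{cop}\cong Cl(1,0,0,0)$ presented by generators and relations (Definition~\ref{clt}): the four relations imposed on $(\varphi,d_1,\ldots,d_n)$ are precisely what is needed for $g\mapsto\varphi$, $x_i\mapsto d_i$ to extend to a unique algebra map $\Psi:E(n)^{cop}\to\mathrm{End}_k(A)$, hence to a module structure $\mu(h\otimes a)=\Psi(h)(a)$. The one genuine verification is that $\mu$ is a \emph{module-algebra} structure, i.e. that \eqref{modalg1} and \eqref{modalg2} hold for \emph{all} $h$, not merely on generators. I would handle this by the standard reduction: \eqref{modalg1} and \eqref{modalg2} are equivalent to the multiplication $m:A\otimes A\to A$ and the unit $u:k\to A$ being morphisms of $E(n)^{cop}$-modules (with $A\otimes A$ carrying the tensor module structure via $\Delta^{cop}$ and $k$ the trivial module via $\varepsilon$), and a $k$-linear map between modules over an algebra is a module map as soon as it commutes with the action of a generating set. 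It then suffices to check that $m$ and $u$ intertwine the actions of $g$ and of each $x_i$: on $A\otimes A$ the element $g$ acts as $\varphi\otimes\varphi$ and $x_i$ as $a\otimes b\mapsto\varphi(a)\otimes d_i(b)+d_i(a)\otimes b$, so for $g$ the intertwining is the multiplicativity and unitality of $\varphi$, and for $x_i$ it is exactly the $\varphi$-derivation identity together with $d_i(1_A)=0$.

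Finally, bijectivity is immediate: the two constructions are mutually inverse because both an action and its reconstruction are algebra maps $E(n)^{cop}\to\mathrm{End}_k(A)$ agreeing on the generators $g,x_1,\ldots,x_n$, hence coinciding, while extracting the tuple from a constructed action returns $\Psi(g)=\varphi$ and $\Psi(x_i)=d_i$. The main obstacle—indeed the only point requiring care—is justifying that the module-algebra axioms need only be checked on algebra generators; everything else is a direct translation of the defining relations of $E(n)^{cop}$.
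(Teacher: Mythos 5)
Your proposal is correct, and while the forward direction coincides with the paper's argument (read off $\varphi=\mu(g\otimes-)$, $d_i=\mu(x_i\otimes-)$ and translate the defining relations of $E(n)^{cop}$ and the module-algebra axioms \eqref{modalg1}--\eqref{modalg2} into conditions \eqref{item1}--\eqref{item2}), your converse takes a genuinely different and more conceptual route. The paper constructs $\mu$ explicitly on the basis $\{g^jx_P\}$ via $\mu(g^jx_P\otimes a)=\varphi^j(d_P(a))$ and then verifies the module axiom by a case analysis with sign bookkeeping (the constants $\alpha_{Q,P}$) and the module-algebra axiom \eqref{modalg1} by induction on $|P|$. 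You instead invoke the universal property of the presentation of $E(n)^{cop}\cong Cl(1,0,0,0)$ by generators and relations to obtain the algebra map $\Psi:E(n)^{cop}\to\mathrm{End}_k(A)$ for free, and then reduce \eqref{modalg1}--\eqref{modalg2} to the statement that $m$ and $u$ are $E(n)^{cop}$-module maps, which need only be checked on the algebra generators $g,x_1,\ldots,x_n$ because the set of elements for which a linear map intertwines the actions is a subalgebra. Both reductions are legitimate: the presentation with basis $\{g^jx_P\}$ is exactly what Definition~\ref{clt} records, the tensor and trivial module structures exist because $E(n)^{cop}$ is a bialgebra (cf.\ the paper's own remark that a module algebra is an algebra in ${_H}\mathrm{Vec}_k$), and $d_i(1_A)=0$ follows from the $\varphi$-derivation identity. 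What your approach buys is the elimination of all explicit sign computations and the induction on $|P|$; what the paper's explicit formula buys is the closed expression $\mu(g^jx_P\otimes a)=\varphi^j(d_P(a))$, which is used downstream to derive the explicit coaction formula \eqref{coactionfinal}, so if you adopted your route you would still want to note that $\Psi(g^jx_P)=\varphi^j d_P$ (which is immediate since $\Psi$ is an algebra map). Your bijectivity argument via uniqueness of algebra maps agreeing on generators is also cleaner than, but equivalent to, the paper's ``straightforward to check'' claim.
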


\begin{proof}
Given an $E(n)^{cop}$-action $\mu:E(n) \otimes A \rightarrow A$, set $\varphi(a):=\mu(g \otimes a)$. Clearly $\varphi: A \rightarrow A$ is a $k$-linear map and furthermore, for every $a, b \in A$,
\begin{eqnarray*}
\varphi(ab)=\mu(g \otimes ab)\overset{\eqref{modalg1}}{=}\mu(g_2 \otimes a)\mu(g_1 \otimes b)=\mu(g \otimes a)\mu(g \otimes b)=\varphi(a)\varphi(b),\\
\varphi(1_A)=\mu(g \otimes 1_A)\overset{\eqref{modalg2}}{=}\varepsilon(g)1_A=1_A,
\end{eqnarray*}
which means that $\varphi$ is an algebra endomorphism. Then
\begin{eqnarray*}
\varphi(\varphi(a))=\mu(g \otimes \mu(g \otimes a))=\mu(g^2 \otimes a)=\mu(1_H \otimes a)=a,
\end{eqnarray*}
for every $a \in A$, so $o(\varphi) \leq 2$. This means that $\varphi$ is an involution of $A$.
Next, for any $i=1, \ldots, n$, if we set $d_i(a):=\mu(x_i \otimes a)$, we have a $k$-linear map $d_i:A \rightarrow A$ such that
\begin{eqnarray*}
d_i(ab)=\mu(x_i \otimes ab)\overset{\eqref{modalg1}}{=}\mu(x_i \otimes a)\mu(1_H \otimes b)+\mu(g \otimes a)\mu(x_i \otimes b)=d_i(a)b+\varphi(a)d_i(b),
\end{eqnarray*}
for every $a,b \in A$, that is, $d_i:A \rightarrow A$ is a $\varphi$-derivation. Furthermore, for any $a \in A$,
\begin{eqnarray*}
d_i(d_i(a))=\mu(x_i \otimes \mu(x_i \otimes a))=\mu(x_i^2 \otimes a)=\mu(0 \otimes a)=0,
\end{eqnarray*}
which means that $d_i^2$ is the zero map.
Finally,
\begin{eqnarray*}
d_i(\varphi(a))=\mu(x_i \otimes \mu(g \otimes a))=\mu(x_i g \otimes a)=-\mu(gx_i \otimes a)=-\mu(g \otimes \mu(x_i \otimes a))=-\varphi(d_i(a)),
\end{eqnarray*}
for every $a \in A$, which means that $\varphi$ and $d_i$ must anticommute. 
Similarly, one can easily check that $d_id_j=d_jd_i$ for every $i,j=1, \ldots, n$. 

In this way we have established an assignment
\[\Phi: \lbrace E(n)^{cop}\textrm{-actions on a f.d. algebra}  \rbrace \rightarrow \lbrace (\varphi, d_1, \ldots, d_n) \ | \ \textrm{satisfying } \eqref{item1}, \eqref{item2} \rbrace\]
\[(\mu: E(n)^{cop} \otimes A \rightarrow A) \overset{\Phi}{\longmapsto }(\varphi:=\mu(g \otimes \--), d_i:=\mu(x_i \otimes \--)).\]

\medskip

Conversely, let us fix an $n+1$-uple of $k$-linear maps $(\varphi:A \rightarrow A, d_i: A \rightarrow A)$, $i=1, \ldots, n$, satisfying \eqref{item1}, \eqref{item2}. We are going to show that we can build an $E(n)^{cop}$-action on $A$. 
We define a $k$-linear map $\mu:E(n)^{cop} \otimes A \rightarrow A$, by  setting 
\begin{equation}\label{mu1}
\mu(g^jx_P \otimes a):=\varphi^j(d_{i_1}d_{i_2} \cdots d_{i_s}(a)), \quad \mu(g^jx_{\emptyset} \otimes a)=\mu(g^j \otimes a):=\varphi^j(a)
\end{equation}
for every $j \in \lbrace 0,1 \rbrace$, $P \subseteq \lbrace 1, \ldots, n\rbrace$ and $a \in A$. Here $x_P=x_{i_1}x_{i_2}\cdots x_{i_s}$, when $P=\lbrace i_1 <i_2< \ldots <i_s \rbrace$, and $x_{\emptyset}=1_{E(n)}$  as usual. We need to show that $(A, \mu)$ is an $E(n)^{cop}$-module and that \eqref{modalg1}-\eqref{modalg2} hold. 

By definition $\mu(1_{E(n)} \otimes a)=a$, therefore we only need to prove that 
\begin{equation}\label{module1}
\mu(h' \otimes \mu(h \otimes a))=\mu(h'h \otimes a)
\end{equation}
for every $h, h' \in E(n)^{cop}$ and every $a \in A$, to show that $(A, \mu)$ is an $E(n)^{cop}$-module. It is sufficient to pick $h$ and $h'$ among elements of the basis of $E(n)$. Let $x_P=x_{i_1}x_{i_2}\cdots x_{i_s}$ and $x_Q=x_{j_1}x_{j_2}\cdots x_{j_t}$. We have
\begin{eqnarray*}
\mu(g^kx_Q \otimes \mu(g^jx_P \otimes a))&\overset{\eqref{mu1}}{=}&\mu(g^kx_Q \otimes \varphi^j(d_{i_1}d_{i_2} \cdots d_{i_s}(a)))\\
&\overset{\eqref{mu1}}{=}&\varphi^k(d_{j_1}d_{j_2} \cdots d_{j_t}(\varphi^j(d_{i_1}d_{i_2} \cdots d_{i_s}(a))).
\end{eqnarray*}
Since $\varphi d_i=-d_i \varphi$ for every $i=1, \ldots, n$, it follows that
\[\varphi^k(d_{j_1}d_{j_2} \cdots d_{j_t}(\varphi^j(d_{i_1}d_{i_2} \cdots d_{i_s}(a)))=(-1)^{jt}\varphi^{j+k}(d_{j_1}d_{j_2} \cdots d_{j_t}(d_{i_1}d_{i_2} \cdots d_{i_s}(a))).\]
Suppose there are $l,m \in \mathbb{N}$ such that $j_l=i_m$. Since the $d_i$'s anticommute and $d_i^2 \equiv 0$, by changing sign accordingly, one can bring $d_{j_l}$ and $d_{i_m}$ together and show that the whole term must vanish.
In this case, it is clear that $P \cap Q \neq \emptyset$, and $g^jx_Qg^jx_P=0$, since $x_{j_l}x_{i_m}=x_{i_m}^2=0$.
Therefore
\[\mu(g^kx_Q \otimes \mu(g^jx_P \otimes a))=0=\mu(0 \otimes a)=\mu(g^jx_Qg^jx_P \otimes a),\]
i.e. \eqref{module1} holds.
Now suppose that $P \cap Q = \emptyset$. Then we have
\[\mu(g^kx_Q \otimes \mu(g^jx_P \otimes a))=(-1)^{jt}\varphi^{j+k}(d_{j_1}d_{j_2} \cdots d_{j_t}d_{i_1}d_{i_2} \cdots d_{i_s}(a)),\]
where all the involved derivations are different. Let $\alpha_{Q,P} \in \lbrace -1, 1 \rbrace$ be defined by $x_Qx_P=\alpha_{Q,P}x_{Q \cup P}$. If we denote by $d_R$ the ordered composition of derivations indexed by the ordered set $R$ (e.g. $d_P=d_{i_1}d_{i_2} \cdots d_{i_s}$, where $P=\lbrace i_1 <i_2 < \ldots < i_s \rbrace$), it is clear that also $d_Qd_P=\alpha_{Q,P}d_{Q \cup P}$. We can conclude that
\begin{eqnarray*}
\mu(g^kx_Q \otimes \mu(g^jx_P \otimes a))&=&(-1)^{jt}\varphi^{j+k}(d_{j_1}d_{j_2} \cdots d_{j_t}d_{i_1}d_{i_2} \cdots d_{i_s}(a))\\
&=&(-1)^{jt}\varphi^{j+k}(d_Qd_P(a))\\
&=&(-1)^{jt}\alpha_{Q,P}\varphi^{j+k}(d_{Q \cup P}(a))\\
&\overset{\eqref{mu1}}{=}&(-1)^{js}\alpha_{Q,P} \mu(g^{j+k}x_{Q \cup P} \otimes a)\\
&=&(-1)^{jt}\mu(g^{j+k}x_Q x_P \otimes a)\\
&=&\mu(g^kx_Q g^jx_P \otimes a)\\
\end{eqnarray*}
Therefore $(A, \mu)$ is an $E(n)^{cop}$-module. Now we prove that it is an $E(n)^{cop}$-module algebra, i.e. that \eqref{modalg1} and \eqref{modalg2} hold.

Let $d_{\emptyset}:=\textrm{Id}_A$. We have
\[\mu(g^jx_P \otimes 1_A)=\varphi^j(d_P(1_A))=\delta_{P,\emptyset}=\varepsilon(g^jx_P)1_A,\]
therefore \eqref{modalg2} holds true.
Next
\begin{eqnarray*}
\mu(x_i \otimes ab) &=& d_i(ab)\\
&=& d_i(a)b+\varphi(a)d_i(b)\\
&=&\mu(x_i \otimes a)\mu(1_{E(n)} \otimes b)+ \mu(g \otimes a)\mu(x_i \otimes b)\\
&=&\mu((x_i)_2 \otimes a)\mu((x_i)_1 \otimes b)
\end{eqnarray*}
for every $a,b \in A$ and every $i=1, \ldots, n$.
Suppose that $\mu(x_P \otimes ab)=\mu((x_P)_2 \otimes a)\mu((x_P)_1 \otimes b)$ holds true for every $a, b \in A$ and every $P$ with cardinality $m \geq 1$.
Take $Q=\lbrace i_1, i_2, \ldots i_{m+1} \rbrace$ (where all elements are different, so $|Q|=m+1$) and let $Q'=Q \setminus \lbrace i_1 \rbrace=\lbrace i_2, \ldots, i_{m+1} \rbrace$. Then
\begin{eqnarray*}
\mu(x_Q \otimes ab)&=&\mu(x_{i_1} \otimes \mu(x_{Q'} \otimes ab))\\
&\overset{ind. hyp.}{=}&\mu(x_{i_1} \otimes \mu((x_{Q'})_2 \otimes a)\mu((x_{Q'})_1 \otimes b))\\
&=&\mu((x_{i_1})_2 \otimes \mu((x_{Q'})_2 \otimes a))\mu((x_{i_1})_1 \otimes\mu((x_{Q'})_1 \otimes b))\\
&=&\mu((x_{i_1})_2(x_{Q'})_2 \otimes a))\mu((x_{i_1})_1(x_{Q'})_1 \otimes b))\\
&=&\mu((x_Q)_2 \otimes a))\mu((x_Q)_1 \otimes b)).\\
\end{eqnarray*}
Thus we have proved $\mu(x_P \otimes ab)=\mu((x_P)_2 \otimes a))\mu((x_P)_1 \otimes b))$ by induction on $m=|P|$.
Clearly $\mu(1_{E(n)} \otimes ab)=ab=\mu(1_{E(n)} \otimes a)\mu(1_{E(n)} \otimes b)$ and $\mu(g \otimes ab)=\varphi(ab)=\varphi(a)\varphi(b)=\mu(g \otimes a)\mu(g \otimes b)$ for every $a, b \in A$.
Finally
\begin{eqnarray*}
\mu(gx_P \otimes ab)&=&\mu(g \otimes \mu(x_P \otimes ab))\\
&=&\mu(g \otimes \mu((x_P)_2 \otimes a)\mu((x_P)_1 \otimes b))\\
&=&\varphi(\mu((x_P)_2 \otimes a)\mu((x_P)_1 \otimes b))\\
&=&\varphi(\mu((x_P)_2 \otimes a))\varphi(\mu((x_P)_1 \otimes b))\\
&=&\mu(g \otimes \mu((x_P)_2 \otimes a))\mu(g \otimes \mu((x_P)_1 \otimes b))\\
&=&\mu(g(x_P)_2 \otimes a))\mu(g(x_P)_1 \otimes b))\\
&=&\mu((gx_P)_2 \otimes a))\mu((gx_P)_1 \otimes b))\\
\end{eqnarray*}
for every $a,b \in A$. We have proved that \eqref{modalg1} holds for every $a, b \in A$ and every $h$ of the canonical basis of $E(n)$. Since the involved maps are $k$-linear, this implies that \eqref{modalg1} holds for every $h \in E(n)$ and every $a, b \in A$. 

In this way we have established an assignment
\[\Psi: \lbrace (\varphi, d_1, \ldots, d_n) \ | \ \textrm{satisfying } \eqref{item1}, \eqref{item2} \rbrace \rightarrow \lbrace E(n)^{cop}\textrm{-actions on a f.d. algebra}  \rbrace \]
\begin{align*}
(\varphi, d_1, \ldots, d_n) \quad  \overset{\Psi} {\longmapsto}  \quad \mu:E(n)^{cop} \otimes A & \longrightarrow A\\
g^jx_P \otimes a & \longmapsto \varphi^j(d_P(a)).\\
\end{align*}
It is straightforward to check that the assignments $\Phi$ and $\Psi$ are inverse to each other, and therefore that the correspondence between $E(n)^{cop}$-actions and $n+1$-uples $(\varphi,d_1, \ldots, d_n)$ satisfying \eqref{item1}, \eqref{item2} is bijective.
\end{proof}
%The statement of Proposition~\ref{firstprop2} can also be rephrased in terms of $\mathbb{Z}_2$-gradings.
%\begin{proposition}
%Let $A$ be a finite-dimensional algebra over a field $k$ of characteristic $\chara \neq 2$. Then an action of $E(n)^{cop}$ on $A$ is completely determined by a choice of:
%\begin{enumerate}
%\item a $\Z_2$ grading $A=A_{+} \oplus A_{-}$,
%\item a family $(d_1, \ldots, d_n)$ of $\varphi$-derivations (where $\varphi$ defines the above grading) such that $d_i^2 \equiv 0$, $d_id_j=-d_jd_i$ and $d_i(A_{\pm}) \subseteq A_{\mp}$ for every $i,j=1, \ldots, n$.
%\end{enumerate}
%\end{proposition}

\subsection{The explicit correspondence}

Finally we can employ the correspondence established in Proposition~\ref{firstprop2} to write down the explicit expression of a coaction $\rho:A \rightarrow A \otimes E(n)$ in terms of the associated involution $\varphi$ and derivations $d_1, \ldots, d_n$. We have seen that every $E(n)$-coaction $\rho$ on a finite-dimensional algebra $A$ is defined by
\begin{equation}\label{explgen}
\rho(a)= \underset{P \subseteq \lbrace 1, \ldots, n \rbrace}{\sum}(-1)^{\lfloor \frac{|P|+1}{2} \rfloor} \left[ x_P \cdot a \otimes \frac{x_P +(-1)^{|P|}gx_P}{2}+gx_P \cdot a \otimes \frac{x_P +(-1)^{|P|+1}gx_P}{2}\right].
\end{equation}
for every $a \in A$, where $\cdot$ denotes a (left) $E(n)^{cop}$-action. Since, by Proposition~\ref{firstprop2}, each $E(n)^{cop}$-action is in bijective correspondence with an $n+1$-uple $(\varphi, d_1, \ldots, d_n)$ where $\varphi:A \rightarrow A$ is an involution and each $d_i:A \rightarrow A$ is a $\varphi$-derivation such that $d_i^2 \equiv 0$, $\varphi d_i =-d_i \varphi$ and $d_id_j=-d_jd_i$ for every $i,j=1, \ldots, n$, \eqref{explgen} rewrites as
\[\rho(a)= \underset{P \subseteq \lbrace 1, \ldots, n \rbrace}{\sum}(-1)^{\lfloor \frac{|P|+1}{2} \rfloor} \left[ d_P(a) \otimes \frac{x_P +(-1)^{|P|}gx_P}{2}+\varphi(d_P(a)) \otimes \frac{x_P +(-1)^{|P|+1}gx_P}{2}\right],\]
where again $d_P=d_{i_1}d_{i_2}\cdots d_{i_s}$ for $P=\lbrace i_1<i_2<\ldots<i_s \rbrace$ and $d_{\emptyset}=\textrm{Id}_A$.
\begin{theorem}\label{maingen}
Let $A$ be a finite dimensional algebra over a field $k$ of characteristic $\chara k \neq 2$. Then an $E(n)$-comodule algebra structure on $A$ is given by:
\begin{equation}\label{coactionfinal}
\rho(a)= \underset{P \subseteq \lbrace 1, \ldots, n \rbrace}{\sum}(-1)^{\lfloor \frac{|P|+1}{2} \rfloor} \left[ d_P(a) \otimes \frac{x_P +(-1)^{|P|}gx_P}{2}+\varphi(d_P(a)) \otimes \frac{x_P +(-1)^{|P|+1}gx_P}{2} \right],
\end{equation}
where
\begin{enumerate}
\item $\varphi$ is an automorphism  of $A$ of order $o(\varphi) \leq 2$ (i.e. an involution of $A$),
\item $d_{\emptyset}=\textrm{Id}_A$ and $d_P=d_{i_1}d_{i_2}\cdots d_{i_{|P|}}$ is a composition of $\varphi$-derivations such that $d_i^2 \equiv 0$, $\varphi d_i = -d_i \varphi$ and $d_id_j=-d_jd_i$.
\end{enumerate}
%or equivalently by:
%\begin{enumerate}
%\item a $\Z_2$ grading $A=A_{+} \oplus A_{-}$,
%\item a family $(d_1, \ldots, d_n)$ of $\varphi$-derivations (where $\varphi$ defines the above grading) such that $d_i^2 \equiv 0$, $d_id_j=-d_jd_i$ and $d_i(A_{\pm}) \subseteq A_{\mp}$ for every $i,j=1, \ldots, n$.
%\end{enumerate}
\end{theorem}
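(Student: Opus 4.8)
The statement is essentially a synthesis of the two bijections established above, so the plan is to compose them and then perform a single explicit substitution. First I would invoke Proposition~\ref{generalduality}: the functors $U$ and $V$ give an isomorphism of categories ${_{E(n)^{cop}}}\mathrm{Vec}_k \cong \mathrm{Vec}_k^{E(n)}$ that preserves algebras, so $E(n)$-comodule algebra structures $\rho$ on $A$ are in bijective correspondence with $E(n)^{cop}$-module algebra structures $\mu$ on $A$. Moreover this correspondence has already been recorded explicitly in formula \eqref{explgen}, which writes $\rho$ in terms of the associated left $E(n)^{cop}$-action $\cdot$.

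Next I would invoke Proposition~\ref{firstprop2}, which asserts that $E(n)^{cop}$-actions on the finite-dimensional algebra $A$ are in bijection, via the mutually inverse assignments $\Phi$ and $\Psi$, with tuples $(\varphi, d_1, \ldots, d_n)$ satisfying conditions~\eqref{item1} and~\eqref{item2}. In particular, reading off \eqref{mu1}, the action $\mu$ attached to such a tuple satisfies $\mu(x_P \otimes a) = d_P(a)$ and $\mu(g x_P \otimes a) = \varphi(d_P(a))$ for every ordered subset $P \subseteq \{1, \ldots, n\}$ and every $a \in A$.

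The remaining step is purely a substitution. Replacing $x_P \cdot a = \mu(x_P \otimes a)$ by $d_P(a)$ and $g x_P \cdot a = \mu(g x_P \otimes a)$ by $\varphi(d_P(a))$ in \eqref{explgen} produces exactly \eqref{coactionfinal}. Composing the two bijections then yields that $E(n)$-comodule algebra structures on $A$ are in bijective correspondence with tuples $(\varphi, d_1, \ldots, d_n)$ satisfying~\eqref{item1} and~\eqref{item2}, with the coaction given in closed form by \eqref{coactionfinal}; this establishes both that every such tuple determines a structure and that every structure arises in this way.

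Since the genuine work — the colinear/linear dictionary of Proposition~\ref{generalduality} and the verification in Proposition~\ref{firstprop2} that the maps built from a tuple satisfy the module-algebra axioms \eqref{modalg1}--\eqref{modalg2} — has already been carried out, no real obstacle remains. The only point demanding care is bookkeeping: one must check that the two summands of \eqref{explgen}, namely the one tensored with $\frac{x_P + (-1)^{|P|} g x_P}{2}$ and the one tensored with $\frac{x_P + (-1)^{|P|+1} g x_P}{2}$, are correctly paired so that they receive $d_P(a)$ and $\varphi(d_P(a))$ respectively. This sign-and-parity consistency check is the whole content of the final substitution.
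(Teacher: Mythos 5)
Your proposal is correct and follows essentially the same route as the paper: the paper's own justification of Theorem~\ref{maingen} is precisely the discussion preceding it, which composes Proposition~\ref{generalduality} with Proposition~\ref{firstprop2} and substitutes $\mu(x_P\otimes a)=d_P(a)$, $\mu(gx_P\otimes a)=\varphi(d_P(a))$ from \eqref{mu1} into \eqref{explgen} to obtain \eqref{coactionfinal}. Nothing further is needed.
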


It is now clear that to have a classification of all $E(n)$-coactions on a finite-dimensional algebra $A$ is equivalent to have a classification of all the involutions of $A$ and the corresponding skew-derivations satisfying the hypothesis of Theorem~\ref{maingen}.

\subsection{The space of coinvariants}

We recall that, given a Hopf algebra $H$ and an $H$-comodule algebra $A$ with structure map $\rho: A \rightarrow A \otimes H$, the space of coinvariants $A^{co H}$ is defined as
\[A^{co H}=\lbrace a \in A \ | \ \rho(a)=a \otimes 1_H \rbrace.\]

\begin{proposition}
Let $A$ be a finite dimensional algebra over a field $k$ of characteristic $\chara k \neq 2$ and let $\rho:A \rightarrow A \otimes E(n)$ be a coaction of $E(n)$ on $A$. Let $(\varphi, d_1, \ldots, d_n)$ be the tuple mentioned in \autoref{maingen} associated to $\rho$. Then
\[A^{coH}=\ker (\varphi - \textrm{Id}_A) \cap \bigcap_{i=1}^n \ker d_i.\]
\end{proposition}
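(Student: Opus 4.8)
The plan is to expand the explicit coaction \eqref{coactionfinal} from \autoref{maingen} in the basis $\lbrace g^j x_P \rbrace$ of $E(n)$ and to read off the defining condition $\rho(a)=a\otimes 1_{E(n)}$ coefficient by coefficient, exploiting the linear independence of the $x_P$ and the $gx_P$. Grouping the two summands attached to a fixed $P$ in \eqref{coactionfinal}, I would record that the coefficient of $x_P$ in $\rho(a)$ is $\tfrac{1}{2}(-1)^{\lfloor (|P|+1)/2\rfloor}\bigl(d_P(a)+\varphi(d_P(a))\bigr)$, while the coefficient of $gx_P$ is $\tfrac{1}{2}(-1)^{\lfloor (|P|+1)/2\rfloor+|P|}\bigl(d_P(a)-\varphi(d_P(a))\bigr)$. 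This bookkeeping step—correctly splitting each $P$-block of \eqref{coactionfinal} into its $x_P$- and $gx_P$-parts—is the only place that requires genuine care; everything afterwards is immediate.

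Next I would impose $\rho(a)=a\otimes 1_{E(n)}=a\otimes g^0x_{\emptyset}$ and compare coefficients. For the block $P=\emptyset$ (where $d_{\emptyset}=\textrm{Id}_A$), matching the $x_{\emptyset}=1$-coefficient to $a$ and forcing the $g$-coefficient to vanish both reduce to $\varphi(a)=a$, i.e.\ $a\in\ker(\varphi-\textrm{Id}_A)$. For every nonempty $P$ both coefficients must vanish, which gives $d_P(a)+\varphi(d_P(a))=0$ and $d_P(a)-\varphi(d_P(a))=0$; since $\chara k\neq 2$, adding these two relations forces $d_P(a)=0$ (the companion identity $\varphi(d_P(a))=0$ is then automatic). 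Thus $a\in A^{coH}$ if and only if $\varphi(a)=a$ and $d_P(a)=0$ for every nonempty $P\subseteq\lbrace 1,\ldots,n\rbrace$.

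It remains to reduce the condition $\lbrace d_P(a)=0 : \emptyset\neq P\rbrace$ to the smaller family $\lbrace d_i(a)=0 : i=1,\ldots,n\rbrace$. One inclusion is trivial, as the singletons $P=\lbrace i\rbrace$ are among the nonempty $P$. For the converse, writing $P=\lbrace i_1<\cdots<i_s\rbrace$ so that $d_P=d_{i_1}\cdots d_{i_s}$, the innermost operator applied to $a$ is $d_{i_s}(a)=0$, whence $d_P(a)=0$; thus vanishing on all singletons propagates to every $P$. Combining this with the previous paragraph yields $A^{coH}=\ker(\varphi-\textrm{Id}_A)\cap\bigcap_{i=1}^n\ker d_i$, as claimed. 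I do not anticipate any serious obstacle: the argument is a coefficient comparison followed by the elementary observation that $d_i(a)=0$ kills every ordered composition $d_P$.
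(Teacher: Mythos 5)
Your proposal is correct and follows essentially the same route as the paper: both impose $\rho(a)=a\otimes 1$ on the explicit formula \eqref{coactionfinal}, use linear independence of the $x_P$ and $gx_P$ to extract $\varphi(a)=a$ and the vanishing of the derivations, and rely on the fact that $d_P(a)=d_{i_1}\cdots d_{i_s}(a)$ vanishes once the singleton conditions $d_i(a)=0$ hold. Your version is merely a bit more systematic in writing out all $P$-coefficients before reducing to the singletons, whereas the paper extracts the singleton conditions first and lets the rest of the sum collapse.
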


\begin{proof}
$(\subseteq)$ Suppose $a \in A^{coH}$. Then $\rho(a)=a \otimes 1$ and by \eqref{coactionfinal} we find
\[a \otimes 1 =\underset{P \subseteq \lbrace 1, \ldots, n \rbrace}{\sum}(-1)^{\lfloor \frac{|P|+1}{2} \rfloor} \left[ d_P(a) \otimes \frac{x_P +(-1)^{|P|}gx_P}{2}+\varphi(d_P(a)) \otimes \frac{x_P +(-1)^{|P|+1}gx_P}{2}\right].\]
By linear independence we must have
\[ d_i(a) \otimes \frac{x_i -gx_i}{2}=0\]
for every $i=1, \ldots, n$ and therefore $a \in \ker d_i$ for every $i=1, \ldots, n$.
As a straightforward consequence we find
\[a \otimes 1 = \rho(a)= a \otimes \frac{1 +g}{2}+\varphi(a) \otimes \frac{1 -g}{2}.\]
This forces $\varphi(a)=a$ and therefore $a \in \ker (\varphi - \textrm{Id}_A)$.

$(\supseteq)$ Suppose $a \in \ker (\varphi - \textrm{Id}_A) \cap \bigcap_{i=1}^n \ker d_i$. Then it is immediate to check that
$\rho(a)=a \otimes 1$ using \eqref{coactionfinal}.
\end{proof}

\section{The case of semisimple Clifford algebras}\label{gensemisimplecase}

Our aim is to determine all $E(n)$-coactions on a semisimple Clifford algebra $A=Cl(\alpha, \beta_i, \gamma_i, \lambda_{ij})$, that is, thanks to \autoref{maingen}, all the involutions and the skew-derivations of $A$. Since, by \autoref{class1}, a semisimple Clifford algebra is either a central simple algebra or the product of two isomorphic central simple algebras,  it turns out that the maps we are looking for must be of a very specific type. As \autoref{class1} suggests, the exact form of these maps will depend on the parity of $n$ and thus we will discuss the two cases separately. Let us first remind a couple of useful definitions.

\begin{definition}
Let $A$ be a $k$-algebra and $\varphi:A \rightarrow A$ an automorphism of $A$.
$\varphi$ is said to be an \emph{inner} automorphism if there exists an invertible element $c \in A$ such that
\[\varphi(a)=c^{-1}ac \qquad \textrm{for all } a \in A.\]
A $\varphi$-derivation $d: A \rightarrow A$ is said to be \emph{inner} if there exists an element $u \in A$ such that
\[d(a)=ua-\varphi(a)u  \qquad \textrm{for all } a \in A.\]
\end{definition}
We will write $\varphi_c:A \rightarrow A$ to denote the inner morphism associated to the element $c$, i.e. such that $\varphi_c(a)=c^{-1}ac$ for all $a \in A$, and $d_i:A \rightarrow A$ to indicate the inner $\varphi$-derivation associated to the element $u_i$, i.e. such that $d_i(a)=u_ia-\varphi(a)u_i$ for every $a \in A$. From now on we will always assume $\chara k \neq 2$ even when not stated explicitly.

\subsection{The case \texorpdfstring{$n$}{n} odd}

Let $A=Cl(\alpha, \beta_i, \gamma_i, \lambda_{ij})$ be a semisimple Clifford algebra of dimension $2^{n+1}$ with $n$ odd. \autoref{class1} tells us that $A$ is a central simple algebra and the Skolem-Noether Theorem \cite[Thm. IV.1.8]{La} ensures that every automorphism of $A$ \--- and thus every involution of $A$ \--- is inner. It can be proved that the simplicity of $A$ also forces every skew-derivation $d$ to be inner, but in our case, since we are interested in skew-derivations anticommuting with involutions, we will see that the fact that $d$ is inner comes as an immediate consequence of this request.

\begin{theorem}\label{oddcase}
Let $A=Cl(\alpha, \beta_i, \gamma_i, \lambda_{ij})$ be a $2^{n+1}$-dimensional Clifford algebra with $n$ odd. Assume that $A$ is semisimple. Then an $E(n)$-coaction on $A$ is given by:
\[\rho(a)= \underset{P \subseteq \lbrace 1, \ldots, n \rbrace}{\sum}(-1)^{\lfloor \frac{|P|+1}{2} \rfloor} \left[ d_P(a) \otimes \frac{x_P +(-1)^{|P|}gx_P}{2}+c^{-1}d_P(a)c \otimes \frac{x_P +(-1)^{|P|+1}gx_P}{2} \right]\]
where
\begin{enumerate}
\item $c$ is an invertible element of $A$ such that $c^2 \in k$,
\item $d_{\emptyset}=\textrm{Id}_A$ and $d_P=d_{i_1}d_{i_2}\cdots d_{i_{|P|}}$ is a composition of maps $d_i:A \rightarrow A$ defined by $d_i(a)=u_ia-c^{-1}acu_i$ for every $a \in A$, such that $u_ic+cu_i=0$, $u_i^2 \in k$ and $u_iu_j+u_ju_i \in k$.
\end{enumerate}
\end{theorem}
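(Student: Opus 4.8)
The plan is to specialize the abstract tuple $(\varphi, d_1, \ldots, d_n)$ furnished by \autoref{maingen} to the central simple setting. First I would invoke \autoref{class1}: since $n$ is odd and $A$ is semisimple, $A$ is central simple over $k$, so $\mathcal{Z}(A) = k$. The involution $\varphi$ is in particular an algebra automorphism, so the Skolem--Noether theorem yields an invertible $c \in A$ with $\varphi = \varphi_c$, that is $\varphi(a) = c^{-1}ac$. The requirement $\varphi^2 = \textrm{Id}_A$ then reads $c^{-2}ac^2 = a$ for all $a$, i.e. $c^2 \in \mathcal{Z}(A) = k$, which is condition (1).

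For the derivations, the key observation is that twisting by $c$ linearizes the skew-Leibniz rule: setting $\delta_i := c\, d_i$ one computes
\[
\delta_i(ab) = c\,d_i(a)\,b + c\,\varphi(a)\,d_i(b) = \delta_i(a)\,b + a\,\delta_i(b),
\]
since $c\varphi(a) = ac$. Thus $\delta_i$ is an ordinary derivation of the central simple (hence separable) algebra $A$, and every derivation of such an algebra is inner, say $\delta_i(a) = v_i a - a v_i$. Putting $u_i := c^{-1}v_i$ gives $d_i(a) = c^{-1}(v_ia - av_i) = u_i a - c^{-1}ac\,u_i = u_i a - \varphi(a)u_i$, so each $d_i$ is an inner $\varphi$-derivation.

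I expect the main subtlety to be the normalization $u_ic + cu_i = 0$. The point is that $u_i$ is not unique: $d_i$ is unchanged under $u_i \mapsto u_i + z$ precisely when $za = \varphi(a)z$ for all $a$, which after substituting $\varphi$ and using $\mathcal{Z}(A)=k$ forces $z \in k\,c$; so $u_i$ is determined only modulo $k\,c$. Now I would feed in the anticommutation relation $\varphi d_i = -d_i\varphi$: expanding both sides with $\varphi^2=\textrm{Id}_A$ gives $(\varphi(u_i)+u_i)\varphi(a) = a(\varphi(u_i)+u_i)$ for all $a$, whence $\varphi(u_i)+u_i \in k\,c$ by the same centrality argument. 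Since $\chara k \neq 2$, replacing $u_i$ by $u_i - \tfrac12(\varphi(u_i)+u_i)$, a legitimate adjustment inside $k\,c$, achieves $\varphi(u_i) = -u_i$, equivalently $cu_i = -u_ic$; this is the relation $u_ic+cu_i=0$. As the adjustment does not alter $d_i$, the remaining relations are unaffected.

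Finally I would translate the last two conditions into relations on the $u_i$ by direct commutator computations, which collapse once $\varphi(u_i) = -u_i$. A short expansion gives $d_i^2(a) = u_i^2 a - a u_i^2 = [u_i^2, a]$, so $d_i^2 \equiv 0$ is equivalent to $u_i^2 \in \mathcal{Z}(A) = k$; likewise $(d_id_j + d_jd_i)(a) = [u_iu_j + u_ju_i,\,a]$, so $d_id_j = -d_jd_i$ is equivalent to $u_iu_j + u_ju_i \in k$. This establishes condition (2). Substituting $\varphi(d_P(a)) = c^{-1}d_P(a)c$ and $d_i(a) = u_ia - c^{-1}ac\,u_i$ into the formula of \autoref{maingen} produces the displayed coaction, and the converse follows since every step above is reversible.
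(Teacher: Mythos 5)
Your argument is correct and reaches the same endpoint as the paper, but it establishes the innerness of the skew-derivations by a genuinely different mechanism. The paper deliberately avoids the general theory of derivations of central simple algebras: it derives the inner form of $d_i$ directly from the anticommutation relation \eqref{varphid}, by expanding $d_i(\varphi_c(a))=d_i(c^{-1}ac)$ with the skew-Leibniz rule and solving, which produces the \emph{canonical} representative $u_i=\tfrac{1}{2}d_i(c)c^{-1}$; the identity $u_ic+cu_i=0$ then falls out by evaluating \eqref{derisinn} at $a=c$. You instead linearize first, via the twist $\delta_i:=c\,d_i$ (the verification $c\varphi_c(a)=ac$ is exactly right), invoke the standard fact that every derivation of a central simple algebra is inner, and only then use \eqref{varphid} to normalize $u_i$ within its ambiguity class $u_i+kc$ so that $\varphi_c(u_i)=-u_i$. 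Both routes are valid; yours costs you one extra external input (innerness of ordinary derivations of central simple algebras, which the paper explicitly chose not to assume), but it buys a cleaner separation of concerns and makes the non-uniqueness of $(c,u_1,\ldots,u_n)$ completely explicit --- in fact your observation that $u_i$ is determined only modulo $kc$ is precisely the content of Remark~\ref{bijcorr}. Your closing computations $d_i^2(a)=[u_i^2,a]$ and $(d_id_j+d_jd_i)(a)=[u_iu_j+u_ju_i,a]$ coincide with the paper's. The only place you are terser than you should be is the converse: ``every step above is reversible'' is acceptable here because your equivalences are stated as such once $\varphi_c(u_i)=-u_i$ holds, but a careful write-up should still record that for \emph{any} $u_i$ with $u_ic+cu_i=0$ the map $a\mapsto u_ia-\varphi_c(a)u_i$ is automatically a $\varphi_c$-derivation satisfying \eqref{varphid}, as the paper does in its second half.
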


\begin{proof}
Let $\rho:A \rightarrow A \otimes E(n)$ be an $E(n)$-coaction on $A$. Then, by \autoref{maingen}, $\rho$ is given by \eqref{coactionfinal}, where $(\varphi, d_1, \ldots, d_n)$ satisfy
\begin{eqnarray}
&\varphi^2 =\textrm{Id}_A,\label{varphi^2}\\
&d_i^2 \equiv 0 \ \textrm{for every } i=1, \ldots, n, \label{d^2}\\
&\varphi d_i=-d_i\varphi \ \textrm{for every } i=1, \ldots, n,\label{varphid}\\
&d_id_j=-d_jd_i  \ \textrm{for every } i,j=1, \ldots, n\label{didj}.
\end{eqnarray}
As already observed, $A$ is a central simple algebra by \autoref{class1} and thus $\varphi$ must be an inner involution. Let us show what conditions \eqref{varphi^2}-\eqref{didj} translate into when $\varphi=\varphi_c$ is inner.
\[\varphi_c^2 \equiv \textrm{Id} \iff c^{-2}ac^2=a \quad \textrm{for all } a \in A \iff ac^2=c^2a \quad \textrm{for all } a \in A \iff c^2 \in \mathcal{Z}(A)=k.\]
Next, we want to prove that a $\varphi_c$-derivations $d$ anticommute with $\varphi_c$ if, and only if, $d$ is of a particular form.
\begin{equation}\label{dphi}
d\varphi_c=-\varphi_c d \iff d(\varphi_c(a))=-\varphi_c(d(a)) \quad \textrm{for all } a \in A
\end{equation}
The LHS is
\begin{eqnarray*}
d(\varphi_c(a))&=&d(c^{-1}ac)\\
&=& d(c^{-1})ac+\varphi_c(c^{-1})d(ac)\\
&=& d(c^{-1})ac+c^{-1}c^{-1}c[d(a)c+\varphi_c(a)d(c)]\\
&=& d(c^{-1})ac+c^{-1}d(a)c+c^{-1}\varphi_c(a)d(c).\\
\end{eqnarray*}
The RHS of \eqref{dphi} gives $-\varphi_c(d(a))=-c^{-1}d(a)c$.
Thus, we can easily conclude that $d$ anticommutes with $\varphi_c$ if, and only if,
\[-c^{-1}d(a)c=d(c^{-1})ac+c^{-1}d(a)c+c^{-1}\varphi_c(a)d(c) \quad \textrm{ for all } a \in A,\]
that is
\[d(a)=\frac{-cd(c^{-1})a-\varphi_c(a)d(c)c^{-1}}{2} \quad \textrm{ for all } a \in A.\]
Since $d$ is a $\varphi_c$-derivation, it is immediate to see that $d(c)c^{-1}+cd(c^{-1})=d(c)c^{-1}+\varphi_c(c)d(c^{-1})=d(cc^{-1})=d(1)=0$, and therefore that $cd(c^{-1})=-d(c)c^{-1}$, hence
\[
d(a)=\frac{d(c)c^{-1}}{2}a-\varphi_c(a)\frac{d(c)c^{-1}}{2} \quad \textrm{for all } a \in A.
\]
By setting  $u_i:=\frac{d_i(c)c^{-1}}{2}$, we have proved that each $d_i$ satisfies \eqref{dphi} if, and only if, 
\begin{equation}\label{derisinn}
d_i(a)=u_ia-\varphi_c(a)u_i \quad \textrm{for all } a \in A.
\end{equation}
In particular every $d_i$ must be an inner $\varphi_c$-derivation.
Notice that \eqref{derisinn} for $a=c$ gives
\[2u_ic\overset{def. \ u_i}{=}d_i(c)\overset{\eqref{derisinn}}{=}u_ic-\varphi_c(c)u_i=u_ic-cu_i \iff 2u_ic=u_ic-cu_i \iff u_ic+cu_i=0,\]
so that we have determined an $n+1$-uple of elements $(c, u_1, \ldots, u_n)$ satisfying
\begin{eqnarray}
&c^2 \in k,\label{c^2}\\
&u_ic+cu_i=0 \ \textrm{for every } i=1, \ldots, n \label{uc}.
\end{eqnarray}
Let us prove that also
\begin{eqnarray}
&u_i^2 \in k \ \textrm{for every } i=1, \ldots, n,\label{u^2}\\
&u_iu_j+u_ju_i \in k  \ \textrm{for every } i,j=1, \ldots, n,\label{uiuj}
\end{eqnarray}
are satisfied.
First we point out that
\begin{eqnarray}
\varphi_c(u_i)&=&\varphi_c\left(\frac{d_i(c)c^{-1}}{2} \right)=\frac{1}{2}\varphi_c(d_i(c))\varphi_c(c^{-1})=-\frac{1}{2}d_i(\varphi_c(c))c^{-1}=-\frac{1}{2}d_i(c)c^{-1}=-u_i \label{phiu}
%\qquad d(u)&=&u^2-\varphi_c(u)u=u^2+u^2=2u^2.
\end{eqnarray}
for every $i=1, \ldots, n$.
Then we observe that, for every $a \in A$
\begin{eqnarray*}
d_i^2(a)=0 &\iff & d_i(u_ia-\varphi_c(a)u_i)=0\\
& \iff & u_i^2a-\varphi_c(u_ia)u_i-u_i\varphi_c(a)u_i+\varphi_c(\varphi_c(a)u_i)u_i=0\\
& \iff &  u_i^2a-\varphi_c(u_i)\varphi_c(a)u_i-u_i\varphi_c(a)u_i+\varphi_c^2(a)\varphi_c(u_i)u_i=0\\
&\overset{\eqref{phiu}+\eqref{varphi^2}}{\iff} & u_i^2a+u_i\varphi_c(a)u_i-u_i\varphi_c(a)u_i-au_i^2=0\\
&\iff & u_i^2a-au_i^2=0.
\end{eqnarray*}
This is equivalent to $u_i^2 \in \mathcal{Z}(A)=k$, thus \eqref{u^2} holds. Finally, for any two $\varphi_c$-derivations $d_i$, $d_j$, we have $d_id_j=-d_jd_i$ if, and only if, for every $a \in A$
\begin{eqnarray*}
&&d_i(d_j(a))=-d_j(d_i(a))\\
&\iff& u_id_j(a)-\varphi_c(d_j(a))u_i=-u_jd_i(a)+\varphi_c(d_i(a))u_j\\
&\iff & u_iu_j a-u_i\varphi_c(a)u_j-\varphi_c(u_ja-\varphi_c(a)u_j)u_i=\\
&&=-u_ju_ia+u_j\varphi_c(a)u_i+\varphi_c(u_ia-\varphi_c(a)u_i)u_j\\
&\overset{\eqref{phiu}+\eqref{varphi^2}}{\iff}& u_iu_j a-u_i\varphi_c(a)u_j+u_j\varphi_c(a)u_i-au_ju_i=-u_ju_ia+u_j\varphi_c(a)u_i-u_i\varphi_c(a)u_j+au_iu_j\\
&\iff & u_iu_j a-au_ju_i=-u_ju_ia+au_iu_j\\
&\iff & (u_iu_j+u_ju_i)a=a(u_iu_j+u_ju_i).
\end{eqnarray*}
Then $u_iu_j+u_ju_i \in \mathcal{Z}(A)=k$ and \eqref{uiuj} is verified. In this way we have shown that, when $A$ is central simple, to a $n+1$-uple of maps $(\varphi, d_1, \ldots, d_n)$ satisfying \eqref{varphi^2}-\eqref{didj} corresponds a $n+1$-uple of elements $(c,u_1, \ldots, u_n)$ for which \eqref{c^2}-\eqref{uiuj} are verifed.

\medskip

Conversely, consider a tuple $(c,u_1, \ldots, u_n)$ such that \eqref{c^2}-\eqref{uiuj} hold true. If $\varphi_c:A \rightarrow A$ is defined by $\varphi_c(a)=c^{-1}ac$ for every $a \in A$, with $c$ satisfying \eqref{c^2}, then it is immediate to check that $\varphi_c$ is an automorphism such that $\varphi_c^2 \equiv \textrm{Id}$. Moreover, given the map $d_i: A \rightarrow A$ defined by $d_i(a)=u_ia-\varphi_c(a)u_i$ for every $a \in A$, we can easily show that it satisfies \eqref{varphid}.
In fact we have
\begin{eqnarray*}
\varphi_c(d_i(a))&=&\varphi_c(u_ia-\varphi_c(a)u_i)\\
&\overset{\eqref{c^2}}{=}&\varphi_c(u_i)\varphi_c(a)-a\varphi_c(u_i)\\
&=&c^{-1}u_iac-ac^{-1}u_ic\\
&\overset{\eqref{uc}}{=}& -u_ic^{-1}ac+au_i\\
&=&-d_i(c^{-1}ac)\\
&=&-d_i\varphi_c(a)
\end{eqnarray*}
for every $a \in A$.
Furthermore
\begin{eqnarray*}
d_i(d_j(a)) &=& u_id_j(a)-\varphi_c(d_j(a))u_i\\
&=& u_i(u_ja-\varphi_c(a)u_j)-c^{-1}(u_ja-\varphi_c(a)u_j)cu_i\\
&=& u_iu_ja-u_i\varphi_c(a)u_j-c^{-1}u_ja c u_i-c^{-1}\varphi_c(a)u_jcu_i\\
&=& u_iu_ja-u_ic^{-1}acu_j-c^{-1}u_ja c u_i-c^{-2}a c u_jcu_i\\
&\overset{\eqref{c^2}+\eqref{uc}}{=}& u_iu_ja+c^{-1}u_iacu_j+u_jc^{-1}a c u_i+a  u_ju_i\\
&\overset{\eqref{uiuj}}{=}&-u_ju_ia+c^{-1}u_iacu_j+u_jc^{-1}a c u_i+au_iu_j\\
&\overset{\eqref{c^2}+\eqref{uc}}{=}& -u_ju_ia+u_jc^{-1}acu_i+c^{-1}u_iacu_j-c^{-2}acu_icu_j\\
&=& -u_ju_ia+u_j\varphi_c(a)u_i+c^{-1}u_iacu_j-c^{-1}\varphi_c(a)u_icu_j\\
&=& -u_j(u_ia-\varphi_c(a)u_i)+c^{-1}(u_ia-\varphi_c(a)u_i)cu_j\\
&=& -d_j(d_i(a))
\end{eqnarray*}
for every $a \in A$, so \eqref{didj} holds true.
Finally,
\begin{eqnarray*}
d_i^2(a)&=&d_i(u_ia-\varphi_c(a)u_i)\\
&=& u_i^2a-u_i\varphi_c(a)u_i-\varphi_c(u_ia)u_i+\varphi_c^2(a)\varphi_c(u_i)u_i\\
&\overset{\eqref{c^2}}{=}& u_i^2a-u_ic^{-1}acu_i-c^{-1}u_iacu_i+ac^{-1}u_icu_i\\
&\overset{\eqref{uc}}{=}& u_i^2a-au_i^2\\
&\overset{\eqref{u^2}}{=}& 0
\end{eqnarray*}
for every $a \in A$, thus \eqref{d^2} is verified.
\end{proof}

\begin{remark}\label{bijcorr}
The correspondence
\[
\lbrace (c,u_1, \ldots, u_n) \ | \ \textrm{satisfying } \eqref{c^2}-\eqref{uiuj} \rbrace \Longleftrightarrow
\lbrace (\varphi_c, d_1, \ldots, d_n) \ | \ \textrm{satisfying } \eqref{varphi^2}-\eqref{didj} \rbrace
\]
is surjective but not injective. This is a direct consequence of the fact that $\textrm{Inn}(A) \cong \frac{\textrm{U}(A)}{\mathcal{Z}(\textrm{U}(A))}$.
In fact we have
\[\varphi_{c_1}=\varphi_{c_2} \iff c_1\mathcal{Z}(A)=c_2\mathcal{Z}(A).\] Let us indicate this coset with $c\mathcal{Z}(A)$ and the associated inner morphism with $\varphi_c$.
Now observe that any two $\varphi_c$-derivation $d$ and $d'$ are equal if, and only if, 
\begin{eqnarray*}
d(a)=d'(a) &\iff & ua-\varphi_c(a)u=va-\varphi_c(a)v\\
&\iff & ua-c^{-1}acu=va-c^{-1}acv\\
&\iff & c(u-v)a=ac(u-v)
\end{eqnarray*}
for every $a \in A$. Therefore $d=d' \iff c(u-v) \in \mathcal{Z}(A) \iff (u-v) \in c^{-1}\mathcal{Z}(A)$. When \eqref{c^2} holds, we have $c^{-1}\mathcal{Z}(A)=c\mathcal{Z}(A)$.
Then we can define a relation $\sim$ over the set $\lbrace (c,u_1, \ldots, u_n) \ | \ \textrm{satisfying } \eqref{c^2}-\eqref{uiuj} \rbrace$ in the following way:
\begin{equation}\label{rel1}
(c',u_1, \ldots, u_n) \sim (c'',v_1, \ldots, v_n) \iff  c'\mathcal{Z}(A)=c''\mathcal{Z}(A) \ni (u_i-v_i) \textrm{ for every } i=1, \ldots n.
\end{equation}
It is easy to prove that $\sim$ is an equivalence relation and that in this way we retrieve a bijective correspondence
\[
\lbrace (c,u_1, \ldots, u_n) \ | \ \textrm{satisfying } \eqref{c^2}-\eqref{uiuj} \rbrace /\sim\\
 \quad \cong \quad 
\lbrace (\varphi, d_1, \ldots, d_n) \ | \ \textrm{satisfying } \eqref{varphi^2}-\eqref{didj} \rbrace.\]
\end{remark}

\subsection{The case \texorpdfstring{$n$}{n} even}

If $n$ is even, then a semisimple Clifford algebra $A=Cl(\alpha, \beta_i, \gamma_i, \lambda_{ij})$ can be written as $A=A_0 \oplus A_0z$, where $A_0$ denotes the even part of $A$ and $z$ is the pseudoscalar of $A$. Remember that $z$ is defined as
\[z=e_1e_2 \ldots e_{n+1},\]
where $e_1$, $e_2$, \ldots, $e_{n+1}$ denote an orthogonal basis for the quadratic form $q$ associated to $A$. In this case, the pseudoscalar is a non-zero element of $\mathcal{Z}(A) \cap A_1$. Moreover we have
\[z^2=(-1)^{\frac{n(n+1)}{2}}\det Q \neq 0\]
and therefore $z$ is invertible.
For further reference on this, see \cite[Ch. V, Sec. 2]{La}. A Clifford algebra also admits a so-called \emph{main involution} (or \emph{grade involution}) \cite[p. 93]{La} 
\begin{eqnarray*}
\sigma: A_0 \oplus A_1 &\rightarrow& A_0 \oplus A_1\\
t+s &\mapsto& t-s
\end{eqnarray*}
(when $k=\mathbb{R}$, $n=0$ and $\det Q=-1$ this coincides with the usual complex conjugation). 
One can easily prove that the map $\sigma$ defined above is an algebra automorphism of order $2$.
%
%\begin{proof}
%Clearly
%\[\sigma(\lambda(a_0+b_0z)+\mu(c_0+d_0z))=\sigma(\lambda a_0+\lambda b_0 z)=\lambda a_0-\lambda b_0 z=\lambda(a_0-b_0z)=\lambda \sigma(a_0+b_0z),\]
%that is, $\sigma$ is $k$-linear.
%Then $0=\sigma(a_0+b_0z)=a_0-b_0z \implies a_0=b_0=0$, which means that $\sigma$ is also injective and therefore surjective.
%Next, since $z \in \mathcal{Z}(A)$,
%\[\sigma((a_0+b_0z)(c_0+d_0z))=\sigma\left(a_0c_0+\left[(-1)^{\frac{n(n+1)}{2}}\det Q\right]b_0d_0+(a_0d_0+b_0c_0)z\right)=\]
%\[=a_0c_0+\left[(-1)^{\frac{n(n+1)}{2}}\det Q\right]b_0d_0-(a_0d_0+b_0c_0)z=(a_0-b_0z)(c_0-d_0z)=\sigma(a_0+b_0z)\sigma(c_0+d_0z).\]
%Finally observe that $\sigma(\sigma(a))=a$ for every $a \in A$ and $\sigma(z)=-z \neq z$, since $z \neq 0$.
%\end{proof}
The statement of \autoref{class1} encourage us to study the involutions of $A$ by making a distinction between two cases, depending on whether the ground field $k$ does or does not contain a square root of the non-zero scalar $\delta:=z^2=(-1)^{\frac{n(n+1)}{2}}\det Q$.

\subsubsection{If $n$ is even and $\delta \notin k^2$}

If $\delta \notin k^2$, then $\mathcal{Z}(A)=k \oplus k z \cong k(\sqrt{\delta} )$, and $A=A_0 \oplus A_0z$ is a central simple algebra over $k(\sqrt{\delta})$ \cite[Th. V.2.4]{La}. 

Let us consider an automorphism $f \in \textrm{Aut}_k(A)$. We have $f\left(\mathcal{Z}(A) \right) \subseteq \mathcal{Z}(A)$ (since $f$ is surjective) and $f(\eta)=\eta$ for every $\eta \in k$, since $f$ is $k$-linear and $f(1)=1$ by definition. Let $\mathcal{Z}(A) \ni w:=f(z)=\mu_1+\mu_2z$ with $\mu_1,\mu_2 \in k$. Then
\[\mu_1^2+2\mu_1\mu_2z+\mu_2^2\delta=(\mu_1+\mu_2z)^2=\left( f(z)\right)^2=f(z^2)=f(\delta)=\delta.\]
By linear independence, we immediately find $\mu_1^2+(\mu_2^2-1)\delta=0=\mu_1\mu_2$. If $\mu_1 \neq 0$, then $\mu_2=0$, but this means $f(z) \in k$, hence $f\left(\mathcal{Z}(A) \right)=k$, which is a contradiction since $f$ is invertible. Therefore we must have $\mu_1=0$ and $\mu_2^2=1$. Given that $k$ is a field with $\chara k \neq 2$, this is equivalent to $\mu_2=\pm 1$.

\medskip

If $\mu_2=1$, then the restriction of $f$ to $\mathcal{Z}(A)=k \oplus kz$ is the identity and we find that
\[f(ra)=f(r)f(a)=rf(a)\]
for every $r \in \mathcal{Z}(A) \cong k(\sqrt{\delta})$ and every $a \in A$. This means that $f$ is a $k(\sqrt{\delta})$-linear automorphism of $A$ and, by the Skolem-Noether Theorem, it must be inner.

\medskip

If $\mu_2=-1$, then the restriction of $\sigma f$ to $\mathcal{Z}(A)$ is the identity and this time we see that $\sigma f$ is a $k(\sqrt{\delta})$-linear automorphism and that therefore it must be inner. This means that $f$ is the composition of $\sigma$ and an inner automorphism of $A$. Hence, we deduce that 
\[\textrm{Aut}_k(A)=\lbrace \varphi_c, \sigma \varphi_c \ | \ c \in \textrm{U}(A) \rbrace,\]
where $\varphi_c$ is again used to denote the inner automorphism associated to the element $c \in A$.

\subsubsection{If $n$ is even and $\delta \in k^2$}

If $\delta \in k^2$, then $\mathcal{Z}(A)=k \oplus kz \cong k \times k$, $A=A_0 \oplus A_0z \cong A_0 \times A_0$ and $A_0$ is a central simple algebra over $k$ \cite[Th. V.2.4]{La}. $A$ has only two non-trivial central idempotents. In fact, let $\mu_1+\mu_2z \in \mathcal{Z}(A)$ for some $\mu_1, \mu_2 \in k$. Then
\[\mu_1^2+\mu_2^2\delta+2\mu_1\mu_2z=(\mu_1+\mu_2z)^2=\mu_1+\mu_2z\]
if, and only if, $\mu_1^2+\mu_2^2\delta=\mu_1$ and $2 \mu_1\mu_2=\mu_2$. One easily sees that the solutions to this couple of equations are 
\[0=0+0z, \ 1=1+0z, t_1:=\frac{1}{2}+\frac{1}{2\sqrt{\delta}}z, \ t_2:=\frac{1}{2}-\frac{1}{2\sqrt{\delta}}z.\]
Then $A_0 \oplus A_0z=A= A_0t_1 \oplus A_0t_2 \cong A_0^{(1)} \times A_0^{(2)}$, via the isomorphism $\Phi:A \rightarrow A_0^{(1)} \times A_0^{(2)}$ 
\[a_0+b_0z=(a_0+\sqrt{\delta}b_0)t_1+(a_0-\sqrt{\delta}b_0)t_2 \longmapsto (a_0+\sqrt{\delta}b_0,a_0-\sqrt{\delta}b_0).\]
Here we have introduced the superscripts on the two copies of $A_0$ to remark the fact that they are distinct, though isomorphic.
Notice that $t_1 \mapsto (1,0)$ and $t_2 \mapsto (0,1)$.
The inverse of $\Phi$ is given by $\Psi: A_0^{(1)} \times A_0^{(2)} \rightarrow A$
\[(u,v) \mapsto \frac{u+v}{2}+\frac{u-v}{2\sqrt{\delta}}z.\]
%This map is easily seen to be $k$-linear, injective and surjective.
%Furthermore, we have, for every $a, c \in A$,
%\[ac=(a_0+b_0z)(c_0+d_0z)=(a_0c_0+b_0d_0\delta)+(a_0d_0+b_0c_0)z. \]
%This product is mapped to
%\[(a_0c_0+b_0d_0\delta+\sqrt{\delta}(a_0d_0+b_0c_0),a_0c_0+b_0d_0\delta-\sqrt{\delta}(a_0d_0+b_0c_0))=(a_0+\sqrt{\delta}b_0,a_0-\sqrt{\delta}b_0)(c_0+\sqrt{\delta}d_0,c_0-\sqrt{\delta}d_0)\]
%and therefore our claim is proved.
%
\begin{remark}\label{tausigma}
Observe that $\Psi\tau=\sigma \Psi$, where $\tau=A_0^{(2)} \times A_0^{(1)} \rightarrow A_0^{(1)} \times A_0^{(2)}$ is the twist defined by $\tau(u,v)=(v,u)$ and $\sigma$ is the main involution of $A$.
\end{remark}
Fix an automorphism $f \in \textrm{Aut}_k(A_0^{(1)} \times A_0^{(2)})$. Clearly $f$ sends $(0,0)$ to $(0,0)$ and $(1,1)$ to $(1,1)$, therefore we are left with two possibilities.
\begin{enumerate}
\item If $f(1,0)=(1,0)$, then $f(0,1)=(0,1)$  and we have $f=f_1 \times f_2$, where $f_i:=f|_{A_0^{(i)}}^{A_0^{(i)}}:A_0^{(i)} \rightarrow A_0^{(i)}$. Given that $A_0$ is a central simple algebra over $k$, then the $f_i$'s must be inner automorphisms by the Skolem-Noether Theorem.
This means that $f:A_0^{(1)} \times A_0^{(2)} \rightarrow A_0^{(1)} \times A_0^{(2)}$ is given by
\[f_{c_1c_2}(u,v):=f(u,v)=(f_1(u),f_2(v))=(c_1^{-1}uc_1,c_2^{-1}vc_2)=(c_1,c_2)^{-1}(u,v)(c_1,c_2)\]
for some invertible $c=(c_1, c_2) \in A_0^{(1)} \times A_0^{(2)}$.
The corresponding automorphism of $A$ is $\varphi_c:=\Psi f_{c_1c_2} \Phi$:
\begin{eqnarray*}
\varphi_c(a)&=&\Psi f_{c_1c_2} \Phi(a)\\
&=& \Psi(c)^{-1}a\Psi(c)\\
&=& \left(\frac{c_1+c_2}{2}+\frac{c_1-c_2}{2\sqrt{\delta}}z \right)^{-1} \cdot a \cdot  \left(\frac{c_1+c_2}{2}+\frac{c_1-c_2}{2\sqrt{\delta}}z \right).
\end{eqnarray*}
Since the invertibility of $c_1$ and $c_2$ is equivalent to that of $\frac{c_1+c_2}{2}+\frac{c_1-c_2}{2\sqrt{\delta}}z$ we will write 
\[\varphi_c(a)=c^{-1}ac \quad \textrm{for every } a \in A\]
with abuse of notation, regarding $c$ as an invertible element of $A$.
\item If $f(1,0)=(0,1)$, then $f(0,1)=(1,0)$ and we have $\tau f=f_1 \times f_2$, where again $\tau$ indicates the appropriate twist. We have that the $f_i$'s must be inner automorphisms of $A_0$, by the Skolem-Noether Theorem, and therefore
\[f_{c_1c_2}(u,v):=f(u,v)=\tau(f_1(u),f_2(v))=(c_2^{-1}vc_2, c_1^{-1}uc_1)=\tau((c_1,c_2)^{-1}(u,v)(c_1,c_2))\]
for some invertible $c=(c_1, c_2) \in A_0^{(1)} \times A_0^{(2)}$.
The corresponding automorphism of $A$ is
\begin{eqnarray*}
\varphi'_c(a)&=&\Psi f_{c_1c_2} \Phi(a)\\
&=&\Psi (\tau(c^{-1}\Phi(a)c))\\
&=&\Psi \tau(c^{-1})\Psi\tau(\Phi(a)) \Psi\tau(c)\\
&\overset{\ref{tausigma}}{=}&\sigma\Psi (c^{-1})\sigma(a) \sigma\Psi(c)\\
&=&\sigma \left(\left(\frac{c_1+c_2}{2}+\frac{c_1-c_2}{2\sqrt{\delta}}z \right)^{-1} \cdot a \cdot  \left(\frac{c_1+c_2}{2}+\frac{c_1-c_2}{2\sqrt{\delta}}z \right)\right).
\end{eqnarray*}
Again, the invertibility of $c_1$ and $c_2$ is equivalent to that of $\frac{c_1+c_2}{2}+\frac{c_1-c_2}{2\sqrt{\delta}}z$, so we will write 
\[\varphi'_c(a)=\sigma(c^{-1}ac)=\sigma\varphi_c(a) \quad \textrm{for every } a \in A\]
with abuse of notation, regarding $c$ as an invertible element of $A$.
\end{enumerate}

Therefore we can conclude that 
\[\textrm{Aut}_k(A)=\lbrace \varphi_c, \sigma\varphi_c \ | \ c \in \textrm{U}(A) \rbrace,\]
as was the case when $\delta \notin k^2$.
\begin{remark}
We have proved that for a $2^{n+1}$-dimensional semisimple Clifford algebra $A=Cl(\alpha, \beta_i, \gamma_i, \lambda_{ij})$ over a field $k$ with $\chara k \neq 2$ 
\[\textrm{Aut}_k(A)=\lbrace \varphi_c \ | \ c \in \textrm{U}(A) \rbrace\] if $n$ is odd
and
\[\textrm{Aut}_k(A)=\lbrace \varphi_c, \sigma\varphi_c \ | \ c \in \textrm{U}(A) \rbrace\]
if $n$ is even.
\end{remark}

\begin{theorem}\label{evencase}
Let $A=Cl(\alpha, \beta_i, \gamma_i, \lambda_{ij})$ be a $2^{n+1}$-dimensional Clifford algebra with $n$ even. Assume that $A$ is semisimple. Let $z$ denote its pseudoscalar and $\sigma$ the main involution of $A$. Then an $E(n)$-coaction on $A$ is either given by:
\[\rho(a)= \underset{P \subseteq \lbrace 1, \ldots, n \rbrace}{\sum}(-1)^{\lfloor \frac{|P|+1}{2} \rfloor} \left[ d_P(a) \otimes \frac{x_P +(-1)^{|P|}gx_P}{2}+c^{-1}d_P(a)c \otimes \frac{x_P +(-1)^{|P|+1}gx_P}{2} \right]\]
where
\begin{enumerate}
\item $c$ is an invertible element of $A$ such that $c^2 \in k \oplus kz$,
\item $d_{\emptyset}=\textrm{Id}_A$ and $d_P=d_{i_1}d_{i_2}\cdots d_{i_{|P|}}$ is a composition of maps $d_i:A \rightarrow A$ defined by $d_i(a)=u_ia-c^{-1}acu_i$ such that $u_ic+cu_i=0$, $u_i^2 \in k \oplus kz$ and $u_iu_j+u_ju_i \in k \oplus kz$,
\end{enumerate}
or by:
\[\rho(a)= \underset{P \subseteq \lbrace 1, \ldots, n \rbrace}{\sum}(-1)^{\lfloor \frac{|P|+1}{2} \rfloor} \left[ d_P(a) \otimes \frac{x_P +(-1)^{|P|}gx_P}{2}+\sigma(c^{-1}d_P(a)c) \otimes \frac{x_P +(-1)^{|P|+1}gx_P}{2} \right]\]
where
\begin{enumerate}
\item $c$ is an invertible element of $A$ such that $c\sigma(c) \in k \oplus kz$,
\item $d_{\emptyset}=\textrm{Id}_A$ and $d_P=d_{i_1}d_{i_2}\cdots d_{i_{|P|}}$ is a composition of maps $d_i:A \rightarrow A$ defined by $d_i(a)=u_ia-\sigma(c^{-1}ac)u_i$ such that $u_ic+c\sigma(u_i)=0$, $u_i^2 \in k \oplus k z$ and $u_iu_j+u_ju_i \in k \oplus k z$.
\end{enumerate}
\end{theorem}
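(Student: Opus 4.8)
The plan is to run the same scheme as in the proof of \autoref{oddcase}, now exploiting that for even $n$ we have shown $\textrm{Aut}_k(A)=\lbrace \varphi_c,\ \sigma\varphi_c \mid c \in \textrm{U}(A) \rbrace$ and, crucially, that here $\mathcal{Z}(A)=k\oplus kz$ rather than $k$. By \autoref{maingen} an $E(n)$-coaction is the same as a tuple $(\varphi,d_1,\ldots,d_n)$ obeying \eqref{varphi^2}--\eqref{didj}, and since $\varphi$ is in particular an involution it is either inner, $\varphi=\varphi_c$, or of the form $\varphi=\sigma\varphi_c$. These two possibilities give the two displayed forms, and I would treat them in turn. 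Note they are genuinely distinct: $\sigma$ is not inner, because $\sigma(z)=-z$ while $z\in\mathcal{Z}(A)$, so no inner automorphism can send $z$ to $-z$.

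In the first case $\varphi=\varphi_c$, every step of the proof of \autoref{oddcase} goes through verbatim, the only difference being that each conclusion ``$\in\mathcal{Z}(A)=k$'' there is now read as ``$\in\mathcal{Z}(A)=k\oplus kz$''. Thus $\varphi_c^2=\textrm{Id}\iff c^2\in k\oplus kz$; the anticommutation \eqref{varphid} forces $d_i$ to be the inner $\varphi_c$-derivation $d_i(a)=u_ia-\varphi_c(a)u_i$; the representative $u_i$ with $\varphi_c(u_i)=-u_i$ satisfies $u_ic+cu_i=0$; and \eqref{d^2}, \eqref{didj} translate into $u_i^2\in k\oplus kz$ and $u_iu_j+u_ju_i\in k\oplus kz$. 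This is precisely the first form, and the converse is the same direct verification as in \autoref{oddcase}.

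For the second case $\varphi=\sigma\varphi_c$ I would first use that $\sigma$ is an algebra automorphism with $\sigma^2=\textrm{Id}$ to compute $\varphi^2(a)=\bigl(c\sigma(c)\bigr)^{-1}a\bigl(c\sigma(c)\bigr)$, so that $\varphi^2=\textrm{Id}\iff c\sigma(c)\in\mathcal{Z}(A)=k\oplus kz$, which is condition $(1)$. Since $A$ is separable over $k$ (it is central simple over $k$ or over $k(\sqrt{\delta})$, or a product $A_0\times A_0$ of central simple algebras), every $\varphi$-derivation is inner, say $d_i(a)=wa-\varphi(a)w$; replacing $w$ by $u_i:=\tfrac12\bigl(w-\varphi(w)\bigr)$ leaves $d_i$ unchanged, because the discarded part $\tfrac12\bigl(w+\varphi(w)\bigr)$ satisfies $va=\varphi(a)v$ for all $a$ and hence contributes nothing to $ua-\varphi(a)u$, and it achieves $\varphi(u_i)=-u_i$. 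The key translation is then $\varphi(u_i)=-u_i\iff\sigma(c^{-1}u_ic)=-u_i\iff u_ic=-c\sigma(u_i)$, i.e. $u_ic+c\sigma(u_i)=0$, the twisted analogue of \eqref{uc}. Finally, the expansions of $d_i^2(a)$ and of $d_id_j(a)+d_jd_i(a)$ use only $\varphi^2=\textrm{Id}$ and $\varphi(u_i)=-u_i$, so they are formally identical to those in \autoref{oddcase} and show that \eqref{d^2} holds iff $u_i^2\in k\oplus kz$ and \eqref{didj} holds iff $u_iu_j+u_ju_i\in k\oplus kz$. The converse is again direct: from $c\sigma(c)\in k\oplus kz$ one recovers $\varphi^2=\textrm{Id}$, from $u_ic+c\sigma(u_i)=0$ one recovers $\varphi(u_i)=-u_i$ and hence $\varphi d_i=-d_i\varphi$, and the two center conditions give the remaining identities.

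The main obstacle is the bookkeeping in this second case, where the main involution $\sigma$ interferes with conjugation by $c$. Unlike the inner case, one has $\varphi(c)=\sigma(c)\neq c$, so the explicit odd-case recipe $u_i=\tfrac12 d_i(c)c^{-1}$ no longer applies; this is why I would deduce innerness abstractly from separability and then symmetrize $w$ into $u_i$. One must also remember that $\sigma$ does not fix all of $\mathcal{Z}(A)=k\oplus kz$ but negates $z$, which is exactly what forces the twisted relations $c\sigma(c)\in k\oplus kz$ and $u_ic+c\sigma(u_i)=0$ to replace the untwisted $c^2\in k\oplus kz$ and $u_ic+cu_i=0$ of the first case.
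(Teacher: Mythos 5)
Your proposal is correct and, for the branch $\varphi=\varphi_c$, coincides with the paper's proof (which likewise just reruns \autoref{oddcase} with $\mathcal{Z}(A)=k\oplus kz$). For the branch $\varphi=\sigma\varphi_c$ you take a genuinely different route to the two key facts. Where you deduce innerness of the $d_i$ abstractly from separability of $A$ (vanishing of first Hochschild cohomology) and then symmetrize a general inner element $w$ to $u_i=\tfrac12(w-\varphi(w))$, the paper exploits the pseudoscalar: since $z$ is an invertible element of $\mathcal{Z}(A)$ with $\sigma\varphi_c(z)=-z$, comparing $d(az)$ with $d(za)$ forces $d(a)=ua-\sigma\varphi_c(a)u$ with the explicit element $u=\tfrac12 d(z)z^{-1}$, which automatically satisfies $\sigma\varphi_c(u)=-u$. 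The paper's trick needs no input beyond the skew-derivation property itself and keeps the argument self-contained and constructive (it is the even-$n$ replacement for the odd-case recipe $u_i=\tfrac12 d_i(c)c^{-1}$, exactly the obstacle you identify); your route is shorter but imports a nontrivial external fact, and it yields $u_i$ only up to a $\varphi$-central summand rather than canonically. Both then extract $u_ic+c\sigma(u_i)=0$ from \eqref{varphid} and translate \eqref{d^2}, \eqref{didj} into the two center conditions by the same formal computation.

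One step of yours needs a justification you do not supply: the claim that the discarded part $v=\tfrac12\bigl(w+\varphi(w)\bigr)$ satisfies $va=\varphi(a)v$ for all $a$ is \emph{not} automatic for an arbitrary inner $\varphi$-derivation (take $\varphi=\mathrm{Id}$ and $w$ non-central). It holds here precisely because of the anticommutation \eqref{varphid}: writing out $d(\varphi(a))=-\varphi(d(a))$ with $\varphi^2=\mathrm{Id}$ gives $(w+\varphi(w))\varphi(a)=a(w+\varphi(w))$ for all $a$, which is the required $\varphi$-centrality after substituting $a\mapsto\varphi(a)$. With that one line added, your symmetrization is sound and the rest of the argument goes through.
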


\begin{proof}
By performing the same steps contained in the proof of \autoref{oddcase} we can prove that $n+1$-uples $(\varphi_c, d_1, \ldots, d_n)$ satisfying \eqref{varphi^2}-\eqref{didj} are in correspondence with $n+1$-uples of elements $(c,u_1, \ldots, u_n)$ satisfying 
\begin{eqnarray}
&c^2 \in k\oplus kz,\label{c^2bis}\\
&u_ic+cu_i=0 \ \textrm{for every } i=1, \ldots, n \label{ucbis},\\
&u_i^2 \in k\oplus kz \ \textrm{for every } i=1, \ldots, n,\label{u^2bis}\\
&u_iu_j+u_ju_i \in k\oplus kz  \ \textrm{for every } i,j=1, \ldots, n.\label{uiujbis}
\end{eqnarray}
We just need to pay attention to the fact that in this instance $\mathcal{Z}(A)=k\oplus kz$.

\medskip

Let us now consider a $n+1$-uple $(\sigma\varphi_c, d_1, \ldots, d_n)$ satisfying \eqref{varphi^2}-\eqref{didj}. We are going to prove that it corresponds to a tuple of elements $(c, u_1, \ldots, u_n)$ such that
\begin{eqnarray}
&c\sigma(c) \in k\oplus kz\label{sigmac}\\
&u_ic+c\sigma(u_i)=0 \ \textrm{for every } i=1, \ldots, n \label{sigmauc}\\
&u_i^2 \in k \oplus kz \ \textrm{for every } i=1, \ldots, n\label{sigmau^2}\\
&u_iu_j+u_ju_i \in k \oplus kz  \ \textrm{for every } i,j=1, \ldots, n\label{sigmauiuj}.
\end{eqnarray}
First of all
\begin{eqnarray*}
(\sigma\varphi_c)^2 = \textrm{Id}_A &\iff & \sigma(c^{-1})c^{-1}ac\sigma(c)=a \quad \textrm{for all } a \in A \\
&\iff & ac\sigma(c)=c \sigma(c)a \quad \textrm{for all } a \in A \\
&\iff & c \sigma(c) \in \mathcal{Z}(A)=k \oplus kz,
\end{eqnarray*}
so \eqref{sigmac} is satisfied.
Next, suppose $d:A \rightarrow A$ is a $\sigma\varphi_c$-derivation. Since $z \in \mathcal{Z}(A)$, we have
\[d(a)z+\sigma\varphi_c(a)d(z)=d(az)=d(za)=d(z)a+\sigma \varphi_c(z)d(a)=d(z)b+\sigma (c^{-1}zc)d(a)=d(z)a-zd(a),\]
for every $a \in A$. This means that
\[2zd(a)=d(z)a-\sigma\varphi_c(a)d(z) \quad \textrm{for all } a \in A,\]
or equivalently 
\[d(a)=\frac{d(z)z^{-1}}{2} a-\sigma\varphi_c(a)\frac{d(z)z^{-1}}{2} \quad \textrm{for all } a \in A.\]
Therefore $d$ is a $\sigma\varphi_c$-derivation only if 
\begin{equation}\label{sigmaphider}
d(a)=ua-\sigma\varphi_c(a)u
\end{equation}
for every $a \in A$, where $u:=\frac{d(z)z^{-1}}{2}$. In particular $d$ must be an inner $\sigma\varphi_c$-derivation. 
Therefore we have $d_i(a)=u_ia-\sigma\varphi_c(a)u_i$ for every $a \in A$ and any $i=1, \ldots, n$.
Furthermore, since \eqref{varphid} holds, we have
\begin{eqnarray*}
-2u_iz&=&-u_iz+\sigma\varphi_c(z)u_i\\
&=&d(-z)\\
&=&d(\sigma(c^{-1}zc))\\
&=&d\sigma\varphi_c(z)\\
&\overset{\eqref{varphid}}{=}&-\sigma\varphi_cd(z)\\
&\overset{def. \ u_i}{=}&-\sigma\varphi_c(2u_iz)\\
&=&-2\sigma(c^{-1}u_izc),
\end{eqnarray*}
that is $u_iz=\sigma(c^{-1}u_izc)$, for any $i=1, \ldots, n$. 
If we apply $\sigma$ and we multiply by $c$ on the left on both sides of the equation we get
\[-c\sigma(u_i)z=u_izc \quad \textrm{for any }i=1, \ldots, n.\]
Since $z \in \mathcal{Z}(A)$ and it is also invertible, we can conclude that
\[-c\sigma(u_i)z=u_izc \iff -c\sigma(u_i)z=u_icz \iff (u_ic+c\sigma(u_i))z=0 \iff u_ic+c\sigma(u_i)=0,\]
for any $i=1, \ldots, n$. This shows that \eqref{sigmauc} holds true.

Next, observe that
\begin{equation}\label{sigmaphiu}
\sigma\varphi_c(u_i)=\sigma\varphi_c\left(\frac{d(z)z^{-1}}{2} \right)=\frac{1}{2}\sigma\varphi_c(d(z))\sigma\varphi_c(z^{-1})=\frac{1}{2}d(\sigma\varphi_c(z))z^{-1}=-\frac{1}{2}d(z)z^{-1}=-u_i
%\qquad d(u)&=&u^2-\sigma\varphi_c(u)u=u^2+u^2=2u^2.
\end{equation}
for every $i=1, \ldots, n$.
Then 
\begin{eqnarray*}
d_i^2(a)=0 &\iff & d_i(u_ia-\sigma\varphi_c(a)u_i)=0\\ 
&\iff & u_i^2a-\sigma\varphi_c(u_ia)u_i-u_i\sigma\varphi_c(a)u_i+\sigma\varphi_c(\sigma\varphi_c(a)u_i)u_i=0\\
&\iff & u_i^2a-\sigma\varphi_c(u_i)\sigma\varphi_c(a)u_i-u_i\sigma\varphi_c(a)u_i+(\sigma\varphi_c)^2(a)\sigma\varphi_c(u_i)u_i=0\\
&\overset{\eqref{sigmaphiu}}{\iff}& u_i^2a+u_i\sigma\varphi_c(a)u_i-u_i\sigma\varphi_c(a)u_i-au_i^2=0\\
&\iff & u_i^2a-au_i^2=0
\end{eqnarray*}
for every $a \in A$. Therefore \eqref{d^2} implies \eqref{sigmau^2}.
Finally, for any two $d_i$, $d_j$ and any $a \in A$ we have 
\begin{eqnarray*}
d_i(d_j(a))=-d_j(d_i(a)) &\iff & u_id_j(a)-\sigma\varphi_c(d_j(a))u_i=-u_jd_i(a)+\sigma\varphi_c(d_i(a))u_j\\
&\iff & u_iu_j a-u_i\sigma\varphi_c(a)u_j-\sigma\varphi_c(u_ja-\sigma\varphi_c(a)u_j)u_i=\\
&=&-u_ju_ia+u_j\sigma\varphi_c(a)u_i+\sigma\varphi_c(u_ia-\sigma\varphi_c(a)u_i)u_j \\
&\overset{\eqref{varphi^2}+\eqref{sigmaphiu}}{\iff}& u_iu_j a-u_i\sigma\varphi_c(a)u_j+u_j\sigma\varphi_c(a)u_i-au_ju_i=\\
&=&-u_ju_ia+u_j\sigma\varphi_c(a)u_i-u_i\sigma\varphi_c(a)u_j+au_iu_j\\
&\iff & u_iu_j a-au_ju_i=-u_ju_ia+au_iu_j\\
 &\iff & (u_iu_j+u_ju_i)a=a(u_iu_j+u_ju_i).
\end{eqnarray*}
Thus \eqref{didj} implies that \eqref{sigmauiuj} is verified.

Conversely, consider a $n+1$-uple of elements $(c, u_1, \ldots, u_n)$ satisfying \eqref{sigmac}-\eqref{sigmauiuj}. Then the tuple $(\sigma \varphi_c, d_1, \ldots, d_n)$, where $\varphi_c(a)=c^{-1}ac$ for every $a \in A$ and $d_i(a)=u_ia-\sigma\varphi_c(a)u_i$ for every $a \in A$ and $i=1, \ldots, n$, satisfies \eqref{varphi^2}-\eqref{didj}.
In fact
\[(\sigma \varphi_c)^2(a)=\sigma(c^{-1})c^{-1}ac\sigma(c) \overset{\eqref{sigmac}}{=}\sigma(c^{-1})c^{-1}c\sigma(c)a=a\]
for every $a \in A$ and thus $(\sigma \varphi_c)^2=\textrm{Id}_A$.
Moreover 
\begin{eqnarray*}
\sigma\varphi_c(d_i(a))&=&\sigma\varphi_c(u_ia-\sigma\varphi_c(a)u_i)\\
&\overset{\eqref{sigmac}}{=}&\sigma\varphi_c(u_ia)-a(\sigma\varphi_c(u_i))\\
&=&\sigma(c^{-1}u_iac)-a(\sigma(c^{-1}u_ic))\\
&\overset{\eqref{sigmauc}}{=}& -u_i\sigma(c^{-1}ac)+au_i\\
&\overset{\eqref{sigmac}}{=}&-d_i(\sigma(c^{-1}ac))\\
&=&-d_i\sigma\varphi_c(a)
\end{eqnarray*}
for every $a \in A$ and any $i=1, \ldots, n$, so \eqref{varphid} holds true. We also have
\begin{eqnarray*}
d_i(d_j(a)) &=& u_id_j(a)-\sigma\varphi_c(d_j(a))u_i\\
&=& u_i(u_ja-\sigma\varphi_c(b)u_j)-\sigma\varphi_c(u_ja-\sigma\varphi_c(a)u_j)u_i\\
&\overset{\eqref{sigmac}}{=}& u_iu_ja-u_i\sigma\varphi_c(a)u_j-\sigma\varphi_c(u_ja)u_i+a\sigma\varphi_c(u_j)u_i\\
&\overset{\eqref{sigmauc}}{=}& u_iu_ja-u_i\sigma\varphi_c(a)u_j+u_j\sigma\varphi_c(a)u_i-bu_ju_i\\
&\overset{\eqref{sigmauiuj}}{=}& -u_ju_ia+u_j\sigma\varphi_c(a)u_i-u_i\sigma\varphi_c(a)u_j+bu_iu_j\\
&\overset{\eqref{sigmauc}}{=}& -u_ju_ia+u_j\sigma\varphi_c(a)u_i+\sigma\varphi_c(u_ia)u_j-a\sigma\varphi_c(u_i)u_j\\
&\overset{\eqref{sigmac}}{=}& -u_j(u_ia-\sigma\varphi_c(a)u_i)+\sigma\varphi_c(u_ia-\sigma\varphi_c(a)u_i)u_j\\
&=& -d_j(d_i(a))
\end{eqnarray*}
for any $a \in A$, so that \eqref{didj} is verified. Finally
\begin{eqnarray*}
d_i^2(a)&=&d_i(u_ia-\sigma\varphi_c(a)u_i)\\
&=& u_i^2a-u_i\sigma\varphi_c(a)u_i-\sigma\varphi_c(u_ia)u_i+(\sigma\varphi_c)^2(a)\sigma\varphi_c(u_i)u_i\\
&\overset{\eqref{sigmac}}{=}& u_i^2a-u_i\sigma\varphi_c(a)u_i-\sigma\varphi_c(u_ia)u_i+a\sigma\varphi_c(u_i)u_i\\
&\overset{\eqref{sigmauc}}{=}& u_i^2a-u_i\sigma\varphi_c(a)u_i+u_i\sigma\varphi_c(a)u_i-au_i^2\\
&=& u_i^2a-au_i^2\\
&\overset{\eqref{sigmau^2}}{=}&0
\end{eqnarray*}
for any $a \in A$ and \eqref{d^2} is proved.
\end{proof}

\begin{remark}
As was the case with $n$ odd, the correspondence
\[
\lbrace (c,u_1, \ldots, u_n) \ | \ \textrm{satisfying } \eqref{c^2bis}-\eqref{uiujbis} \rbrace \Longleftrightarrow
\lbrace (\varphi_c, d_1, \ldots, d_n) \ | \ \textrm{satisfying } \eqref{varphi^2}-\eqref{didj} \rbrace
\]
is not injective, but it can be made so by introducing the relation $\sim$ defined in \eqref{rel1}:
\[
\lbrace (c,u_1, \ldots, u_n) \ | \ \textrm{satisfying } \eqref{c^2bis}-\eqref{uiujbis} \rbrace/\sim \\ \quad \cong
\quad \lbrace (\varphi_c, d_1, \ldots, d_n) \ | \ \textrm{satisfying } \eqref{varphi^2}-\eqref{didj} \rbrace.
\]
Similarly the correspondence
\[
\lbrace (c,u_1, \ldots, u_n) \ | \ \textrm{satisfying } \eqref{sigmac}-\eqref{sigmauiuj} \rbrace \Longleftrightarrow
\lbrace (\sigma\varphi_c, d_1, \ldots, d_n) \ | \ \textrm{satisfying } \eqref{varphi^2}-\eqref{didj} \rbrace
\] is not injective. 
In fact it is easy to see that we have
\[\sigma\varphi_{c_1}=\sigma\varphi_{c_2} \iff c_1\mathcal{Z}(A)=c_2\mathcal{Z}(A).\]
Let us indicate this coset with $c\mathcal{Z}(A)$ and the associated morphism with $\sigma\varphi_c$.
If two $\sigma\varphi_c$-derivation $d_i$ and $d_i'$ are equal, then we must have $d_i(z)=d_i'(z)$. But remember that $d_i(z)=2u_iz$ (see the proof of \autoref{evencase}) and therefore $d_i=d'_i \implies u_i=u'_i$, because $z$ is invertible. The converse implication is trivial.
Then we can define a relation $\sim'$ over the set $\lbrace (c,u_1, \ldots, u_n) \ | \ \textrm{satisfying } \eqref{sigmac}-\eqref{sigmauiuj} \rbrace$ in the following way:
\begin{equation*}\label{rel2}
(c',u_1, \ldots, u_n) \sim' (c'',v_1, \ldots, v_n) \iff  c'\mathcal{Z}(A)=c''\mathcal{Z}(A) \textrm{ and } u_i=v_i \textrm{ for every } i=1, \ldots n.
\end{equation*}
It is not hard to prove that $\sim'$ is an equivalence relation and that
\[
\lbrace (c,u_1, \ldots, u_n) \ | \ \textrm{satisfying } \eqref{sigmac}-\eqref{sigmauiuj} \rbrace/ \sim'\\
\quad \cong \quad
\lbrace (\sigma\varphi_c, d_1, \ldots, d_n) \ | \ \textrm{satisfying } \eqref{varphi^2}-\eqref{didj} \rbrace.
\]
\end{remark}

\section{Examples in low dimension}\label{examples}

\begin{example}
Let $n=0$. Then $A=Cl(\alpha)$ is the Clifford algebra with one generator $G$ such that $G^2=\alpha \neq 0$. We can identify $E(0)$ with the group algebra $k C_2=k\langle 1, g \rangle$ built on the cyclic group $C_2$ of order $2$. Notice that $A$ is a commutative algebra.

\autoref{evencase} tells us that in this case $A$ admits the following  types of $k C_2$-coactions:
\begin{itemize}
\item $\rho(a)= a \otimes 1$ for every $a \in A$,
\item $\rho(a)= a \otimes \frac{1 +g}{2}+\sigma(a) \otimes \frac{1-g}{2} $ for every $a \in A$.
\end{itemize}
Notice that both these coactions do not depend explicitly on an invertible element $c \in A$, since $A$ is commutative and $c^{-1}ac=a$ for every $a \in A$. If we let $k=\mathbb{R}$ we can recover some acknowledged facts. It is well-known that if $\alpha<0$, then $A \cong \mathbb{C}$ and if $\alpha>0$, then $A \cong \mathbb{D}= \mathbb{R} \oplus j\mathbb{R}$, where $j^2=1$ (see, e.g., \cite[p. 217]{Lo}). $\mathbb{D}$ is usually called \emph{the algebra of split-complex numbers} (or \emph{hyperbolic numbers}). The classification of $kC_2$-coactions exhibited above agrees with the well-known fact that
\[\textrm{Aut}_{\mathbb{R}}(\mathbb{C})=\lbrace \textrm{Id}_{\mathbb{C}}, \sigma\rbrace, \ \textrm{Aut}_{\mathbb{R}}(\mathbb{D})=\lbrace \textrm{Id}_{\mathbb{D}}, \sigma \rbrace.\]
\end{example}

\begin{example}\label{4dimexample}
Let $n=1$. Then $A=Cl(\alpha, \beta, \gamma)$ is the Clifford algebra with generators $G$, $X$ such that $G^2=\alpha$, $X^2=\beta$ and $GX+XG =\gamma$. We assume that $\gamma^2-4\alpha\beta \neq 0$, i.e. that $A$ is simple. Remember that we can always find two generators $G'$ and $X'$ of $A$ such that $G'^2=\alpha'$ and $X'^2=\beta'$ and $\alpha'\beta'=\gamma^2-4\alpha \beta\neq 0$: $A$ is a \emph{generalized quaternion algebra} over $k$.

In this case, the Hopf algebra $H:=E(1)$ is Sweedler's Hopf algebra and \autoref{oddcase} tells us that $A$ admits the following  types of $H$-coactions:
\begin{equation}\label{quatcoaction}
\rho(a)=   a \otimes \frac{1 +g}{2}+c^{-1}ac \otimes \frac{1 -g}{2} -  (ua-c^{-1}acu) \otimes \frac{x+gx}{2}+(uc^{-1}ac-au) \otimes \frac{x -gx}{2},
\end{equation}
where $c$ is an invertible element of $A$ such that $c^2 \in k$ and $u$ is an element of $A$ such that $u^2 \in k$ and $uc+cu=0$.

Let us define the set
\[k^{\frac{1}{2}}:=\lbrace a \in A \ | \ a^2 \in k \rbrace.\]
Clearly $k \subseteq k^{\frac{1}{2}}$ and when $c \in k$, then $uc+cu=0 \iff u =0$. Then we find the trivial coaction defined by $\rho(a)=   a \otimes 1$ for every $a \in A$. The set $k^{\frac{1}{2}} \setminus k$ is always non-empty (as $G^2=\alpha \in k$, $X^2=\beta \in k$) and is sometimes called \emph{the set of pure quaternions} of $A$ (cf. \cite[Prop. III.1.3]{La}).

Fix an invertible pure quaternion $c \in k^{\frac{1}{2}} \setminus k$. It is not hard to check that an element $u \in k^{\frac{1}{2}}$ such that $uc+cu=0$ is precisely another pure quaternion orthogonal to $c$. Therefore any (non-trivial) $H$-coaction on $A$ is given by \eqref{quatcoaction} where $c$ is an invertible pure quaternion and $u \in c^{\perp}$.

Furthermore, one can easily prove that when $c=-\gamma+2GX$ and $u=Gc^{-1}=\frac{1}{\gamma^2-4\alpha\beta}(-\gamma G+2\alpha X)$ we find the canonical coaction
\[\rho(a)=   a \otimes \frac{1 +g}{2}+\sigma(a) \otimes \frac{1 -g}{2} +  c^{-1}(Ga-aG) \otimes \frac{x+gx}{2}+(Ga-aG)c^{-1} \otimes \frac{x -gx}{2},\]
i.e.
\[\rho(G)=   G \otimes g, \quad \rho(X)=  X \otimes g +  1 \otimes x.\]
\end{example}

\begin{example}
Let $n=2$, $k=\mathbb{C}$, $\alpha=\beta_1=\beta_2=1$, $\gamma_1=\gamma_2=\lambda_{12}=0$. Then $A=Cl_3(\mathbb{C})$ is the eight-dimensional Clifford algebra over the complex numbers with pseudoscalar $z=GX_1X_2$. \autoref{evencase} tells us that each $E(2)$-coaction on $A$ is in correspondence with a triple $(c,u_1,u_2)$ such that $c$ is invertible, $u_1^2,u_2^2,u_1u_2+u_2u_1 \in \mathbb{C}\oplus\mathbb{C}z$ and either 
\begin{enumerate}
\item $c^2 \in \mathbb{C}\oplus\mathbb{C}z$ and $cu_1+u_1c=0=cu_2+u_2c$,
\item $c\sigma(c) \in \mathbb{C}\oplus\mathbb{C}z$ and $c\sigma(u_1)+u_1c=0=c\sigma(u_2)+u_2c$.
\end{enumerate}

Remember that the algebra $A$ is isomorphic to the direct product $A_0 \times A_0$ \--- where $A_0 \cong Cl_2(\mathbb{C})$ is the even subalgebra of $A$ \---, via the isomorphism  $f: A \rightarrow A_0 \times A_0$ defined by
\begin{eqnarray*}
f(G)&=&\left(-iX_1X_2,iX_1X_2 \right),\\
f(X_1)&=&\left(iGX_2, -iGX_2 \right),\\
f(X_2)&=&\left(-iGX_1,iGX_1 \right).
\end{eqnarray*}
By means of $f$ we obtain that each $E(2)$-coaction on $A$ is determined by a tuple $(c_1,c_2,r_1,r_2,s_1,s_2)$ of elements of $A_0$ such that $c_1$ and $c_2$ are units of $A_0$, the elements $r_1^2,r_2^2,s_1^2,s_2^2,r_1s_1+s_1r_1, r_2s_2+s_2r_2$ all belong to $\mathbb{C}$ and either 
\begin{enumerate}
\item $c_1^2,c_2^2 \in \mathbb{C}$ and $c_1r_1+r_1c_1=0=c_2r_2+r_2c_2$, $c_1s_1+s_1c_1=0=c_2s_2+s_2c_2$,\label{firstcase}
\item $c_2=\lambda c_1^{-1}$ for some $\lambda \in \mathbb{C}^{\times}$ and $r_2=-c_1^{-1}r_1c_1$, $s_2=-c_1^{-1}s_1c_1$.\label{secondcase}
\end{enumerate}
Let us split the two cases.

Since $A_0 \cong Cl_2(\mathbb{C})\cong Cl(1,1,0)$, a tuple $(c_1,c_2,r_1,r_2,s_1,s_2)$ that satisfies \eqref{firstcase} determine four pairs $(c_1,r_1)$, $(c_1,s_1)$, $(c_2,r_2)$, $(c_2,s_2)$ each characterizing an $E(1)$-coaction on $A_0$ (see Example~\ref{4dimexample}). Then we find that tuples satisfying \eqref{firstcase} are given by

\medskip

\begin{center}
\tabulinesep=0.2em
\begin{tabu}{| c | c |}
\hline
Tuple & Conditions  \\
\hline
$(c_1,c_2,0,0,0,0)$ & $c_1, c_2 \in \mathbb{C}^{\times}$.\\
\hline
$(c_1,c_2,0,r_2,0,s_2)$ & \makecell{$c_1 \in \mathbb{C}^{\times}$, $c_2$ is an invertible pure quaternion in $A_0$,\\
$r_2,s_2$ are pure quaternions contained in $c_2^{\perp}$.}\\
\hline
$(c_1,c_2,r_1,0,s_1,0)$ & \makecell{$c_1$ is an invertible pure quaternion in $A_0$, $c_2 \in \mathbb{C}^{\times}$,\\
$r_1,s_1$ are pure quaternions contained in $c_1^{\perp}$.} \\
\hline
$(c_1,c_2,r_1,r_2,s_1,s_2)$ & \makecell{$c_1, c_2$ are invertible pure quaternions in $A_0$,\\  $r_1,s_1$ are pure quaternions contained in $c_1^{\perp}$,\\
$r_2,s_2$ are pure quaternions contained in $c_2^{\perp}$.} \\
\hline
\end{tabu}
\end{center}

\medskip

On the other hand, a tuple that satisfies \eqref{secondcase} must be of the form 
\[(c_1,\lambda c_1^{-1},r_1,-c_1^{-1}r_1c_1,s_1,-c_1^{-1}s_1c_1),\] with $c_1 \in \textrm{U}(A_0)$, $\lambda \in \mathbb{C}^{\times}$, $r_1^2,s_1^2, r_1s_1+s_1r_1 \in \mathbb{C}$. Hence we can easily conclude that tuples satisfying \eqref{secondcase} are the following

\medskip

\begin{center}
\tabulinesep=0.2em
\begin{tabu}{| c | c |}
\hline
Tuple & Conditions  \\
\hline
$(c_1,\lambda c_1^{-1},r_1,-r_1,s_1,-s_1)$ & $c_1 \in \textrm{U}(A_0)$, $\lambda,r_1,s_1 \in \mathbb{C}^{\times}$.\\
\hline
$(c_1,\lambda c_1^{-1},r_1,-r_1,0,0)$ & $c_1 \in \textrm{U}(A_0)$, $\lambda, r_1 \in \mathbb{C}^{\times}$.\\
\hline
$(c_1,\lambda c_1^{-1},0,0,s_1,-s_1)$ & 
$c_1 \in \textrm{U}(A_0)$, $\lambda,s_1 \in \mathbb{C}^{\times}$. \\
\hline
$(c_1,\lambda c_1^{-1},r_1,-c_1^{-1}r_1c_1,s_1,-c_1^{-1}s_1c_1)$ & \makecell{$c_1 \in \textrm{U}(A_0)$, $\lambda \in \mathbb{C}^{\times}$,\\ $r_1,s_1$ are pure quaternions.} \\
\hline
\end{tabu}
\end{center}
\end{example}
\medskip

\noindent\textbf{Acknowledgements.}
FR would like to thank Claudia Menini and Blas Torrecillas for their support during his research work and for comments that helped shape the present article. The author was partially supported by MUR within the National Research Project PRIN 2017.

\end{document}